\renewcommand{\arraystretch}{1.2}
\newtheorem{theorem}{Theorem}[section]
\newtheorem{prop}[theorem]{Proposition}
\newtheorem{lemma}[theorem]{Lemma}
\newcommand{\EE}{\mathbb{E}}
\newcommand{\PP}{\mathbf{P}}
\newcommand{\nL}{L}
\newcommand{\nX}{{X}}
\newcommand{\nY}{{Y}}
\newcommand{\nW}{{W}}
\newcommand{\nZ}{{Z}}
\newcommand{\nnT}{{T}}
\newcommand{\nnL}{{L}}
\newcommand{\calG}{\ensuremath{\mathcal{G}}}
\newcommand{\Ex}{\ensuremath{\mathrm{Ex}}}
\title[Graph classes with given $3$-connected components]{Graph classes 
with given $3$-connected components: \\asymptotic enumeration and random graphs}
\thanks{A preliminary version of this work appeared in Electr. Notes Discrete Math. (29) 2007, 521--529.}
\author[O. Gim\'enez]{Omer Gim\'enez}
\address{O. Gim\'enez:  Departament de Llenguatges i Sistemes Inform\`atics, Universitat Polit\`ecnica de Catalunya, 08034 Barcelona, Spain}
\email{omer.gimenez@upc.edu}
\author[M. Noy]{Marc Noy}
\address{M. Noy:  Departament de Matem\`atica Aplicada 2, Universitat Polit\`ecnica de Catalunya, 08034 Barcelona, Spain}
\email{marc.noy@upc.edu}
\author[J. Ru\'{e}]{Juanjo Ru\'{e}}
\address{J. Ru\'{e}: Departament de Matem\`atica Aplicada 2, Universitat Polit\`ecnica de Catalunya, 08034 Barcelona, Spain}
\email{juan.jose.rue@upc.edu}
\thanks{Partially supported by  grant MTM2005-08618-C02-01.}
\begin{document}

\maketitle

\begin{abstract}
Consider a family $\mathcal{T}$ of $3$-connected graphs of moderate
growth, and let $\mathcal{G}$ be the class of graphs whose
$3$-connected components are graphs in $\mathcal{T}$. We present a
general framework for analyzing such graphs classes based on
singularity analysis of generating functions, which generalizes
previously studied cases such as planar graphs and series-parallel
graphs. We provide a general result for the asymptotic number of
graphs in $\mathcal{G}$, based on the singularities of the
exponential generating function associated to $\mathcal{T}$. We
derive limit laws, which are either normal or Poisson, for several
basic parameters, including the number of edges, number of blocks
and number of components. For the size of the largest block we find
a fundamental dichotomy: classes similar to planar graphs have
almost surely a unique block of linear size, while classes similar
to series-parallel graphs have only sublinear blocks. This dichotomy
also applies to the size of the largest 3-connected component. For
some classes under study both regimes occur, because of a critical
phenomenon as the edge density in the class varies.
\end{abstract}

\section{Introduction}
Several enumeration problems on planar graphs have been solved
recently. It has been shown~\cite{gn} that the number of labelled
planar graphs with $n$ vertices is asymptotically equal to
\begin{equation}\label{asympt-planar}
        c \cdot n^{-7/2} \cdot \gamma^n n!,
\end{equation}
for suitable constants $c$ and $\gamma$. For series-parallel
graphs~\cite{SP}, the asymptotic estimate is of the form, again for
suitable constants $d$ and $\delta$,
\begin{equation}\label{asympt-sp}
        d \cdot n^{-5/2} \cdot \delta^n n!.
\end{equation}
As can be seen from the proofs in \cite{SP,gn}, the difference in
the subexponential term comes from a different behaviour of the
counting generating functions near their dominant singularities.
Related families of labelled graphs have been studied, like
outerplanar graphs~\cite{SP}, graphs not containing $K_{3,3}$ as a
minor \cite{k33}, and, more generally, classes of graphs closed
under minors \cite{growth}. In all cases where asymptotic estimates
have been obtained, the subexponential term is systematically either
$n^{-7/2}$ or $n^{-5/2}$. The present chapter grew up as an attempt
to understand this dichotomy.

A~\emph{class} of graphs is a family of labelled graphs which is
closed under isomorphism. A~class~$\mathcal{G}$ is \emph{closed} if
the following condition holds: a graph is in $\mathcal{G}$ if and
only if its connected, 2-connected and 3-connected components are
in~$\mathcal{G}$. A~closed class is completely determined by its
3-connected members. The basic example is the class of planar
graphs, but there are others, specially minor-closed classes whose
excluded minors are 3-connected.

In this paper we present a general framework for enumerating
closed classes of graphs. Let $T(x,z)$ be the generating function
associated to the family of 3-connected graphs in a closed class
$\mathcal{G}$, where $x$ marks vertices and $z$ marks edges, and let
$g_n$ be the number of graphs in~$\mathcal{G}$ with~$n$ vertices.
Our first result shows that the asymptotic estimates for $g_n$ depend crucially
on the singular behaviour of $T(x,z)$. For a fixed value of $x$, let
$r(x)$ be the  dominant singularity of $T(x,z)$. If $T(x,z)$ has an
expansion at $r(x)$ in powers of $Z =\sqrt{1-z/r(x)}$ with dominant
term $Z^{5}$, then the estimate for $g_n$ is as in
Equation~(\ref{asympt-planar}); if $T(x,z)$ is either analytic
everywhere or the  dominant term is $Z^{3}$, then the pattern is
that of Equation~(\ref{asympt-sp}).
Our analysis gives a clear explanation of these facts in term of the vanishing of
certain coefficients in singular expansions (Propositions
\ref{proposition:B's}, \ref{prop:singB}, and \ref{proposition:C's}).

There also mixed cases, where
2-connected and connected graphs in $\mathcal{G}$ get different
exponents. And there are \emph{critical} cases too, due to the
confluence of two sources for the dominant singularity, where a
subexponential term $n^{-8/3}$ appears. This is the content of
Theorem~\ref{th:mainresult}, whose proof is based on a careful
analysis of singularities.

Section \ref{se:prelimi} presents technical preliminaries needed
in the paper, and Section \ref{se:asympt} contains the main results.
In Section~\ref{se:laws}, extending the analytic techniques
developed for asymptotic enumeration, we analyze random graphs from
closed classes of graphs. We show that several basic parameters
converge in distribution  either to a normal law or to a Poisson
law. In particular, the number of edges, number of blocks and number
of cut vertices are asymptotically normal with linear mean and
variance. This is also the case for the number of special copies of
a fixed graph or a fixed block in the class. On the other hand, the
number of connected components converges to a discrete Poisson law.

In Section~\ref{se:bloc} we study a key extremal parameter: the size
of the largest block, or the largest 2-connected component. And in
this case we find a striking difference depending on the class of
graphs. For planar graphs there is asymptotically almost surely  a
block of linear size, and the remaining blocks are of order
$O(n^{2/3})$. For series-parallel graphs there is no block of linear
size. This also applies more generally to the classes considered in
Theorem~\ref{th:mainresult}. A similar dichotomy occurs when
considering the size of the largest 3-connected component. This is
proved using the techniques developed by Banderier et al.
\cite{airy} for analyzing largest components in random maps. For
planar graphs we prove the following precise result in
Theorem~\ref{th:largest-block}. If $\nX_n$ is the size of the
largest block in random planar graphs with $n$ vertices, then
$$
    \PP\left(\nX_n = \alpha  n + x n^{2/3}\right) \sim n^{-2/3} c g(c x),
$$
where $\alpha \approx 0.95982$ and  $ c \approx  128.35169$ are
well-defined analytic constants, and $g(x)$ is the so called Airy
distribution of the map type, which is a particular  instance of a stable
law of index $3/2$. Moreover, the size of the second largest block
is $O(n^{2/3})$. The giant block is uniformly distributed among the
planar 2-connected graphs with the same number of vertices, hence
according to the results in \cite{bender} it has about $2.2629 \cdot
0.95982\,n = 2.172\, n$ edges, again with deviations of order
$O(n^{2/3})$ (the deviations for the normal law are of order $n^{1/2}$, but
the $n^{2/3}$ term coming from the Airy distribution dominates).
We remark that the size of the largest block has been analyzed too in \cite{kostas}
using different techniques. The main improvement with respect to
\cite{kostas} is that we are able to obtain a precise limit distribution.

With respect to the largest 3-connected component in a
random planar graph, we show that it follows an Airy distribution and has $\eta n$ vertices and
$\zeta n$ edges, where $\eta \approx 0.7346 $ and $\zeta \approx
1.7921 $ are again well-defined. This is technically more involved since we have to analyze the composition of two Airy laws and different probability distributions in 2-connected graphs.

The picture that emerges for random planar graphs   is the
following. Start with a large 3-connected planar graph $M$ (or the
skeleton of a polytope in the space if one prefers a more geometric
view), and perform the following operations. First edges of $M$ are
substituted by small blocks with a distinguished oriented edge,
giving rise to the giant block
$L$; then small connected graphs are attached to some of the
vertices of $L$, which become cut vertices, giving rise to the
largest connected component~$C$. As we show later, $C$
has size $n-O(1)$. This model can be made
more precise and will be the subject of future research.

An interesting open question is whether there are other parameters
besides the size of the largest block (or largest 3-connected
component) for which planar graphs and series-parallel graphs differ
in a qualitative way. We remark that with respect to the largest
component there is no qualitative difference. This is also true for the degree
distribution \cite{degrees}. If $d_k$ is the probability that a
given vertex has degree $k>0$, then in both cases it can be shown
that the $d_k$ decay as $c\cdot n^\alpha q^k$, where $c,\alpha$ and
$q$ depend on the class under consideration \cite{degrees}.

In Section~\ref{se:examples} we apply the previous machinery to the
analysis of several classes of graphs closed under minors, including planar graphs and
series-parallel graphs. Whenever the generating function $T(x,z)$
can be computed explicitly, we obtain precise asymptotic estimates
for the number of graphs $g_n$, and limit laws for the main
parameters. In particular we determine the asymptotic probability of
a random graph being connected,  the constant $\kappa$ such that
the expected number of edges is asymptotically $\kappa n$, and
other fundamental constants.

Our techniques allow also to study graphs with a given density, or
average degree. To fix ideas, let $g_{n,\lfloor \mu n \rfloor}$ be
the number of planar graphs with $n$ vertices and $\lfloor \mu n
\rfloor$ edges: $\mu$~is the edge density and $2\mu$ is the average
degree. For $\mu \in (1,3)$, a precise estimate for $g_{n,\lfloor
\mu n \rfloor}$ can be obtained using a local limit
theorem~\cite{gn}. And parameters like the number of components or
the number of blocks can be analyzed too when the edge density
varies. It turns out that the family of planar graphs with density
$\mu \in (1,3)$ shares the main characteristics of planar graphs.
This is also the case for series-parallel graphs, where $\mu \in
(1,2)$ since maximal graphs in this class have only $2n-3$ edges.
In Section~\ref{se:critical} we show  examples of \emph{critical phenomena} by
a suitable choice of the family $\mathcal{T}$ of 3-connected graphs.
In the associated closed class $\mathcal{G}$, graphs below a
critical density $\mu_0$ behave like series-parallel  graphs, and
above $\mu_0$ they behave like planar graphs, or conversely. We even
have examples with more than one critical value.

We remark that graph classes with given 3-connected components are analyzed also in \cite{grammar} and \cite{gagarin}, where the emphasis is on combinatorial decompositions rather than asymptotic analysis.

\section{Preliminaries}\label{se:prelimi}

Generating functions are of the exponential type, unless we say
explicitly the contrary. The partial derivatives of $A(x,y)$ are
written $A_x(x,y)$ and  $A_y(x,y)$. In some cases the derivative
with respect to $x$ is written $A'(x,y)$. The second derivatives are
written $A_{xx}(x,y)$, and so on. By a.a.s.\ we mean asymptotically
almost surely, which in our case means a property of random graphs
whose probability tends to $1$ as $n$ goes to infinity.

The decomposition of a graph into connected components, and of a
connected graph into blocks (2-connected components) are well known.
We also need the decomposition of a 2-connected graph decomposes
into 3-connected components \cite{Tutte}. A 2-connected graph is
built by series and parallel compositions and 3-connected graphs in
which each edge has been substituted by a block; see below the definition of networks.

A class of labelled graphs~$\mathcal{G}$ is \emph{closed} if a graph
$G$ is in $\mathcal{G}$ if and only if the connected, 2-connected
and 3-connected components of $G$ are in~$\mathcal{G}$. A~closed
class is completely determined by the family $\mathcal{T}$ of its
3-connected members. Let $g_{n}$ be the number of graphs in
$\mathcal{G}$ with $n$ vertices, and let $g_{n,k}$ be the number of
graphs with $n$ vertices and $k$ edges. We define similarly
$c_n,b_n,t_n$ for the number of connected, 2-connected and
3-connected graphs, respectively, as well as the corresponding
$c_{n,k},b_{n,k},t_{n,k}$. We introduce the EGFs
 $$
    G(x,y) = \sum_{n,k} g_{n,k} \, y^k  {x^n \over n!},
 $$
and similarly for $C(x,y)$ and $B(x,y)$. When $y=1$ we recover the
univariate EGFs
$$
    B(x) = \sum b_n {x^n \over n!}, \qquad C(x) = \sum c_n {x^n \over n!},
    \qquad G(x) = \sum g_n {x^n \over n!}.
$$
The following equations reflect the decomposition into connected
components and 2-connected components:
\begin{equation}\label{eq:GCB2}
    G(x,y) = \exp(C(x,y)), \qquad x C'(x,y)    =
    x\exp\left(B'(x C'(x,y),y)
    \right),
\end{equation}
In the first decomposition, one must notice that a general graph is
simply a \emph{set} of labelled connected graphs, hence the equation
$G(x,y) = \exp(C(x,y))$. The second decomposition is a bit more involved.
The EGF $x C'(x,y)$ is associated to the family of connected graphs
with rooted at a vertex. Then, the second equation in~(\ref{eq:GCB2})
says that a connected graph with a rooted vertex is obtained from a
set of rooted $2$-connected graphs (where the root bears no label), in
which we substitute each vertex by a connected graph with a rooted vertex
(the roots allow us to recover the graph from its constituents). We also
define
$$
    T(x,z) = \sum_{n,k} t_{n,k} \, z^n {x^n\over n!},
$$
where the only difference is that the variable for marking edges is
now~$z$. This convention is useful and will be maintained throughout
the paper.

A~\emph{network} is a graph with two distinguished vertices, called
\emph{poles}, such that the graph obtained by adding an edge between
the two poles is 2-connected. Moreover,  the two poles are not
labelled. Networks are the key technical device for encoding the
decomposition of 2-connected graphs into 3-connected components.
We distinguish between three kinds of networks. A network is \textit{series} if it is obtained from a cycle C with a distinguished edge $e$, whose endpoints become the
poles, and every edge different from $e$ is replaced by a network. Equivalently, when removing the root edge if present, the resulting graph is not 2-connected. A network
is \textit{parallel} if it is obtained by gluing two or more networks, none of them containing the root edge, along the common poles. Equivalently, when the two poles are a 2-cut of the network. Finally, an \textit{h-network} is obtained from a 3-connected graph $H$ rooted at an oriented edge, by replacing every edge of $H$ (other than the root) by an arbitrary network. Trakhtenbrot \cite{trak} showed that a network is either series, parallel or an h-network, and Walsh \cite{walsh} translated this fact into generating functions as we show next.

Let $D(x,y)$ be the GF associated to networks, where again $x$ and $y$
mark vertices and edges. Then $D=D(x,y)$ satisfies~(see
\cite{bender}, who draws on \cite{trak,walsh})
\begin{equation}\label{eq:D}
  {2 \over x^2} T_z(x,D) - \log\left({1+D \over 1+y}\right) +
  {xD^2 \over 1+xD} = 0,
\end{equation}
and $B(x,y)$ is related to $D(x,y)$ through
\begin{equation}\label{eq:BD}
B_y(x,y) = {x^2 \over 2}  \left( {1+D(x,y)  \over 1+y} \right).
\end{equation}
For future reference, we set
\begin{equation}\label{eq:phi}
\Phi(x,z)= \frac{2}{x^{2}}\,T_z(x,z)
-\log\left(\frac{1+z}{1+y}\right) +\frac{xz^{2}}{1+xz},
\end{equation}
so that Equation~(\ref{eq:D}) is written in the form $\Phi(x,D)=0$,
for a given value of $y$. By integrating (\ref{eq:BD}) using the
techniques developed in \cite{gn}, we obtain an explicit expression
for $B(x,y)$ in terms of $D(x,y)$ and $T(x,z)$ (see the first part
of the proof of Lemma~5 in \cite{gn}):
\begin{eqnarray}\label{eq:Bexplicit}
B(x,y)&=& T(x,D(x,y))
-\frac{1}{2}xD(x,y)+\frac{1}{2}\log(1+xD(x,y))+ \\
&& \frac{x^{2}}{2}\left(D(x,y)+\frac{1}{2}D(x,y)^2+
(1+D(x,y))\log\left(\frac{1+y}{1+D(x,y)}\right)\right) . \nonumber
\end{eqnarray}
This relation is valid for every closed defined in terms of
3-connected graphs, and can be proved in a more combinatorial
way~\cite{grammar} (see also \cite{gagarin}).

We use singularity analysis for obtaining asymptotic estimates; the main reference here is \cite{FlajoletSedgewig:analytic-combinatorics}.
The singular expansions we encounter in this paper are always of the form
$$
f(x) = f_0 + f_2X^2 + f_4X^4 + \cdots  + f_{2k}X^{2k} + f_{2k+1}X_{2k+1} + O(X_{2k+2}),
$$
where $X = \sqrt{1 - x/\rho}$. That is, $2k + 1$ is the smallest odd integer $i$ such that $f_i \ne 0$. The even powers of X are analytic functions and do not contribute to the asymptotics of $[x^n]f(x)$.
The number $\alpha = (2k+1)/2$ is called the \textit{singular exponent},
and by the transfer theorem \cite{FlajoletSedgewig:analytic-combinatorics} we obtain the estimate
$$
[x^n]f(x) \sim c \cdot n^{\alpha-1} \rho{-n},
$$
where $c = f_{2k+1}/\Gamma(-\alpha)$.

We assume that, for a fixed value of $x$, $T(x,z)$ has a unique
dominant  singularity  $r(x)$, and that there is a singular
expansion near $r(x)$ of the form
\begin{equation}\label{eq:singT}
    T(x,z) = \sum_{n\ge n_0} T_n(x) \left(1- {z \over r(x)}\right)^{n/\kappa},
\end{equation}
where $n_0$ is an integer, possibly negative, and the functions
$t_n(x)$ and $r(x)$ are analytic. This is a rather general
assumption, as it includes singularities coming from algebraic and
meromorphic functions.

The case when $\mathcal{T}$ is empty (there are no 3-connected
graphs) gives rise to the class of series-parallel graphs.
It is shown in~\cite{SP} that, for a fixed value $y=y_0$, $D(x,y_0)$
has a unique dominant singularity $R(y_0)$. This is also true for
arbitrary $\mathcal{T}$, since adding 3-connected graphs can only
increase the number of networks.

\section{Asymptotic enumeration}\label{se:asympt}

Throughout the rest of the chapter we assume that $\mathcal{T}$ is a
family of 3-connected graphs whose GF $T(x,z)$ satisfies the
requirements described in Section~\ref{se:prelimi}. We assume that a
singular expansion like (\ref{eq:singT}) holds, and we let $r(x)$ be
the dominant singularity of $T(x,z)$, and $\alpha$ the singular
exponent.

Our main result gives precise asymptotic estimates for $g_n, c_n,
b_n$ depending on the singularities of $T(x,z)$. Cases (1) and (2)
in the next statement can be considered as generic, whereas (1) and
(2.1) are those encountered in `natural' classes of graphs. The two
situations in case~(3) come from critical conditions, when two
possible sources of singularities coincide. This is the reason for
the unusual exponent $-8/3$, which comes from a singularity of
cubic-root type instead of the familiar square-root type.

\begin{theorem}\label{th:mainresult}
Let $\mathcal{G}$ be a closed family of graphs, and let $T(x,z)$ be
the GF of the family of 3-connected graphs in $\mathcal{G}$. In all
cases $b,c,g,R,\rho$ are explicit positive constants and $\rho < R$.
\begin{itemize}
\item[(1)] If $T_z(x,z)$ is either analytic or has singular
exponent $\alpha<1$, then
$$
b_n\sim b\, n^{-5/2} R^{-n} n!, \qquad c_n\sim c\, n^{-5/2}
\rho^{-n} n!, \qquad g_n\sim g\, n^{-5/2} \rho^{-n} n!
$$
\item[(2)] If $T_z(x,z)$ has singular exponent $\alpha=3/2$, then one of
the following holds:
\smallskip
\begin{itemize}
\item[(2.1)]
$ \displaystyle b_n\sim b\, n^{-7/2} R^{-n} n!, \qquad c_n\sim c\,
n^{-7/2} \rho^{-n} n!, \qquad g_n\sim g\, n^{-7/2} \rho^{-n} n! $
\item[(2.2)]
$ \displaystyle b_n\sim b\, n^{-7/2} R^{-n} n!, \qquad c_n\sim c\,
n^{-5/2} \rho^{-n} n!, \qquad g_n\sim g\, n^{-5/2} \rho^{-n} n! $
\item[(2.3)]
$ \displaystyle b_n\sim b\, n^{-5/2} R^{-n} n!, \qquad c_n\sim c\,
n^{-5/2} \rho^{-n} n!, \qquad g_n\sim g\, n^{-5/2} \rho^{-n} n! $
\end{itemize}

\bigskip
\item[(3)]
If $T_z(x,z)$ has singular exponent $\alpha=3/2$, and in addition a
critical condition is satisfied, one of the following holds:
\smallskip
\begin{itemize}
\item[(3.1)]
$ \displaystyle b_n\sim b\, n^{-8/3} R^{-n} n!, \qquad c_n\sim c\,
n^{-5/2} \rho^{-n} n!, \qquad g_n\sim g\, n^{-5/2} \rho^{-n} n! $
\item[(3.2)]
$ \displaystyle b_n\sim b\, n^{-7/2} R^{-n} n!, \qquad c_n\sim c\,
n^{-8/3} \rho^{-n} n!, \qquad g_n\sim g\, n^{-8/3} \rho^{-n} n! $
\end{itemize}
\end{itemize}

\end{theorem}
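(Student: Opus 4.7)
The plan is to carry out singularity analysis on the chain of functional equations $T \rightsquigarrow D \rightsquigarrow B \rightsquigarrow C \rightsquigarrow G$, localising at $y=1$ and tracking how the singular exponent is transformed at each step. First I would analyse the implicit equation $\Phi(x,D(x)) = 0$ from~(\ref{eq:D}). There are only two possible sources for the dominant singularity $R$ of $D(x)$: either the smooth branch-point condition $\Phi_z(R,D(R))=0$, which by the standard quadratic implicit scheme yields a square-root singularity for $D$ (type $Z^{1}$ in $Z=\sqrt{1-x/R}$), or the singularity of $T_z(x,z)$ at $z=r(x)$, which forces $D(R)=r(R)$ and makes $D$ inherit the singular exponent $\alpha$ of $T_z$. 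Case~(1) of the theorem is the regime in which the branch-point mechanism strictly dominates; Case~(2) assumes $\alpha=3/2$ and the branch-point mechanism still dominates but now the $T_z$-contribution produces an additional $Z^{3}$ term; Case~(3) is precisely the critical confluence, when $R$ satisfies simultaneously $\Phi_z(R,D(R))=0$ and $D(R)=r(R)$.

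Next I would transfer the expansion of $D$ to $B(x,1)$ via~(\ref{eq:Bexplicit}). Substituting $D(x)=D_0+D_2Z^2+D_3Z^3+\cdots$ into the right-hand side and expanding, I expect to see a systematic cancellation of the odd singular terms of low order: the coefficient of $Z^{1}$ vanishes because $\Phi_z(R,D_0)=0$ (this is the branch-point equation), and a further cancellation pushes the leading odd term from $Z^{3}$ to $Z^{5}$ whenever $T_z$ contributes its own $Z^{3}$ term with the correct sign. This is the content of the vanishing statements announced as Propositions~\ref{proposition:B's}, \ref{prop:singB}, \ref{proposition:C's}, and it is exactly what distinguishes the $n^{-7/2}$ regime from the $n^{-5/2}$ regime at the 2-connected level. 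By the transfer theorem, a leading $Z^{3}$ gives $b_n\sim b\,n^{-5/2}R^{-n}n!$ and a leading $Z^{5}$ gives $b_n\sim b\,n^{-7/2}R^{-n}n!$.

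For $C(x)$, I would apply the smooth implicit-function scheme of Flajolet--Sedgewick to $xC'(x)=x\exp(B'(xC'(x)))$. Setting $u=xC'(x)$ and $F(u)=u\exp(-B'(u))$, the equation becomes $F(u)=x$; the singularity $\rho$ of $C$ is either the interior branch point ($F'(u_0)=0$), producing a square-root singularity for $C'$ and hence $c_n\sim c\,n^{-5/2}\rho^{-n}n!$, or it is inherited from the singularity of $B'$ at $R$, in which case the exponent of $C$ matches that of $B'$. The three sub-cases (2.1)--(2.3) correspond precisely to which of the two mechanisms operates at the connected level, given that $B$ is already in the $Z^{5}$ regime. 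Finally $G(x)=\exp(C(x))$ is analytic in its argument, so its singular exponent equals that of $C$, which gives $g_n$ with the same subexponential factor as $c_n$ up to a multiplicative constant.

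The main obstacle will be the critical case~(3). When the branch-point condition and the condition $D(R)=r(R)$ coincide, the Newton polygon associated with $\Phi(x,z)=0$ at $(R,D(R))$ is no longer of quadratic type: the square-root branch produced by $\Phi_z=0$ and the $(1-z/r(x))^{3/2}$ contribution from $T_z$ collide, and the local inversion yields an expansion of $D$ in powers of $Z^{2/3}$ with leading singular term of order $Z^{8/3}$. The delicate points are (a) verifying the non-degeneracy condition ensuring that the confluence is genuinely cubic and not higher, (b) propagating this $8/3$ exponent cleanly through~(\ref{eq:Bexplicit}) to obtain the estimate in~(3.1), and (c) deciding, in~(3.2), when the cubic singularity at the $B$-level survives the smooth implicit scheme for $C$ and when instead the scheme absorbs it into a regular branch point. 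The transfer theorem then yields the unusual exponents $n^{-8/3}$ exactly where they are stated.
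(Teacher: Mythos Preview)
Your overall architecture matches the paper's: reduce Theorem~\ref{th:mainresult} to a statement about singular exponents of $B$, $C$, $G$ at $y=1$, and establish those by tracking the singularity of $D$ through~(\ref{eq:Bexplicit}) and then through $\psi(u)=u\exp(-B'(u))$. But several of the mechanisms you describe are inverted or incorrect, and executing the plan as written would not produce the stated exponents.

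First, your account of Case~(2) is backwards. The generic sub-case (2.1) is \emph{not} ``the branch-point mechanism still dominates with an extra $Z^3$ from $T_z$''; it is the opposite. In the paper one has $\Phi_z(R,D_0)<0$ strictly, and the singularity of $D$ comes solely from $T_z$ becoming singular at $(R,D_0)$. Because $\Phi_{2,0}\neq 0$ in the expansion~(\ref{eq-singPhi}), the local solution $Z(X)$ of $\Phi(X,Z)=0$ is regular at the origin, which forces $D_1=0$ and $D(x)=D_0+D_2X^2+D_3X^3+\cdots$ (Proposition~\ref{pro:Dsing}). If instead the branch point were reached first, as you describe, you would get $D_1\neq 0$ and land back in Case~(1); that is precisely sub-case~(2.3). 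With your description you cannot produce the $5/2$ exponent for $B$.

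Second, the cancellations in $B$ are not the ones you cite. In the branch-point regime $B_1$ vanishes because of the \emph{defining} relation $\Phi(R,D_0)=0$, not because $\Phi_z=0$; and there $B_3\neq 0$ (Proposition~\ref{proposition:B's}). In the singular regime (2.1), both $B_1$ and $B_3$ vanish identically upon substituting the expansion of Proposition~\ref{pro:Dsing} into~(\ref{eq:Bexplicit}); this is a structural identity (Proposition~\ref{prop:singB}), not a sign condition on the $T_z$-contribution.

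Third, your critical-case analysis is off in both sub-cases. In~(3.1) the confluence $\Phi_z(R,D_0)=0$ together with $\Phi$ singular means $\Phi_{2,0}=0$ in~(\ref{eq-singPhi}); balancing $\Phi_{0,2}X^2$ against $\Phi_{3,0}Z^3$ gives $Z\sim X^{2/3}$, hence $D(x)=D_0+D_{4/3}X^{4/3}+\cdots$ (Proposition~\ref{pro:Dsing-crit}), not a leading term of order $X^{8/3}$. Pushed through~(\ref{eq:Bexplicit}) this gives $B$ leading singular term $X^{10/3}$, i.e.\ exponent $5/3$, which transfers to $n^{-8/3}$. And Case~(3.2) is not about a cubic singularity of $B$ surviving to $C$: here $B$ already has exponent $5/2$ as in (2.1), and the confluence occurs at the \emph{connected} level, namely $\psi'(R)=0$ coinciding with $R$ being the singularity of $B'$. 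The same $X^2$-versus-$Z^3$ balance in $\psi(F(x))=x$ then produces exponent $5/3$ for $C$.
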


Using the Transfer Theorems~\cite{FlajoletSedgewig:analytic-combinatorics}, the
previous theorem is a direct application of the following analytic
result for $y=1$. We prove it for arbitrary values of $y=y_0$, since
this has important consequences later on.

\begin{theorem}\label{th:mainresult2}
Let $\mathcal{G}$ be a closed family of graphs, and let $T(x,z)$ be
the GF of the family of 3-connected graphs in $\mathcal{G}$.

For a fixed  value $y=y_0$, let $R=R(y_0)$ be the dominant singularity of
$D(x,y_0)$, and  let $D_0 = D(R,y_0)$.

\begin{itemize}
\item[(1)] If~$T_z(x,z)$ is either analytic or has singular
exponent $\alpha<1$ at $(R,D_0)$, then $B(x,y_0)$, $C(x,y_0)$ and
$G(x,y_0)$ have singular exponent $3/2$.
\bigskip
\item[(2)] If $T_z(x,z)$ has singular exponent $\alpha=3/2$ at $(R,D_0)$,
then one of the following holds:
\smallskip
\begin{itemize}
\item[(2.1)]
$B(x,y_0), C(x,y_0)$ and $G(x,y_0)$ have singular exponent $5/2$.
\item[(2.2)]
$B(x,y_0)$ has singular exponent $5/2$, and $C(x,y_0), G(x,y_0)$
have singular exponent $3/2$.
\item[(2.3)]
$B(x,y_0), C(x,y_0)$ and $G(x,y_0)$ have singular exponent $3/2$.
\end{itemize}
\bigskip
\item[(3)]
If $T_z(x,z)$ has singular exponent $\alpha=3/2$ at $(R,D_0)$, and
in addition a critical condition is satisfied for the singularities
of  either $B(x,y)$ or $C(x,y)$, then one of the following holds:
\smallskip
\begin{itemize}
\item[(3.1)]
$B(x,y_0)$ has singular exponent $5/3$, and $C(x,y_0), G(x,y_0)$
have singular exponent $3/2$.
\item[(3.2)]
$B(x,y_0)$ has singular exponent $5/2$, and $C(x,y_0), G(x,y_0)$
have singular exponent $5/3$.

\end{itemize}

\end{itemize}
\end{theorem}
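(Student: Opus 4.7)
The plan is to trace the singular behavior upwards through the combinatorial decomposition $T \to D \to B \to C \to G$, applying singularity analysis at each stage and identifying, in each case, the coefficient of the singular expansion that is forced to vanish. To start, I would study $D(x,y_0)$, which is determined implicitly by $\Phi(x,D)=0$ with $\Phi$ given in (\ref{eq:phi}). A dominant singularity of $D$ at $x=R$ can arise from two mechanisms: a branch point where $\Phi_D(R,D_0)=0$, producing the generic square-root expansion in $X=\sqrt{1-x/R}$ with leading odd exponent $1$; or the singular locus $z=r(x)$ of $T_z$, which enters $\Phi$ through $T_z(x,D)$ and transmits its own exponent $\alpha$ to $D$. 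In case (1) of the theorem, $T_z$ contributes only analytic or integrable terms, so the first mechanism strictly dominates and $D$ is of square-root type. In the non-critical subcases of (2), a comparison between $R$ and the abscissa where $T_z$ becomes singular selects one of three regimes, corresponding to (2.1), (2.2), (2.3). In the critical case (3) the two abscissas coincide together with $\Phi_D(R,D_0)=0$; a Newton-polygon argument then yields a cubic-root expansion $D(x,y_0)=D_0+\delta_1(1-x/R)^{1/3}+\cdots$ with singular exponent $1/3$.

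Next I pass from $D$ to $B(x,y_0)$ via the explicit formula (\ref{eq:Bexplicit}). The crucial observation---analogous to the Gim\'enez--Noy analysis in \cite{gn}---is that when the singular expansion of $D$ is substituted into (\ref{eq:Bexplicit}), the leading singular contribution cancels by virtue of the defining equation $\Phi(x,D(x,y_0))=0$. Concretely, the derivative of the right-hand side of (\ref{eq:Bexplicit}) with respect to $D$ is proportional to $\Phi$, which vanishes on the solution, forcing the first odd power of $X$ (respectively $X^{1/3}$ in the critical regime) to disappear from $B$. Consequently $B$ gains one full exponent over $D$, giving exponent $3/2$ in cases (1) and (2.3), $5/2$ in cases (2.1), (2.2) and (3.2), and $5/3$ in the critical case (3.1).

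For $C(x,y_0)$ I would treat the implicit relation $xC'(x,y_0)=x\exp\bigl(B'(xC'(x,y_0),y_0)\bigr)$ from (\ref{eq:GCB2}) as an equation $\Psi(x,u)=0$ in $u=xC'$. The same dichotomy reappears: either $\Psi_u$ vanishes first, creating a fresh square-root singularity at some $\rho<R$ which, after the integration $C=\int u/x\,dx$, promotes the exponent from $1/2$ to $3/2$; or $B'$ becomes singular before the implicit branch point, in which case $C$ inherits the singular type of $B$. Comparing the abscissa where $\Psi_u=0$ with the abscissa where $B'$ is singular yields the trichotomy of (2) and the critical alignments of (3). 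Since $G(x,y_0)=\exp\bigl(C(x,y_0)\bigr)$, the function $G$ has the same singular exponent as $C$, and each case of the theorem is obtained by combining the classification $D\to B$ with the classification $B\to C$.

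The main obstacle I anticipate is verifying, in each regime, that the naively leading singular term in $B$ (and again in $C$) really cancels to the claimed order while the next coefficient is generically nonzero; without the nonvanishing check, the singular exponents could spuriously increase further. The critical subcases of (3) add a second difficulty, because the standard implicit function theorem fails simultaneously in two directions, and the Newton polygon of $\Phi$ (respectively $\Psi$) must be computed by hand to confirm a cubic rather than quadratic leading balance. Finally, the strict inequality $\rho<R$ is read off by monotonicity of $xC'(x,y_0)$ in $x$, together with the fact that in all non-critical cases the implicit branch point for $\Psi$ arises strictly before $B'$ becomes singular.
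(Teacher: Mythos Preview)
Your overall strategy matches the paper's: analyze $D$ through $\Phi(x,D)=0$, pass to $B$ via (\ref{eq:Bexplicit}), then handle $C$ through the functional inverse $\psi(u)=u\exp(-B'(u))$, and finally $G=\exp(C)$. The identification of the two singularity mechanisms and of the critical confluence is exactly what the paper carries out in Sections~\ref{subsec:singB}--\ref{subsec:singC}, and your key observation that $\partial B/\partial D=(x^2/2)\,\Phi$ is correct and is precisely what drives the cancellation $B_1=0$ in Proposition~\ref{proposition:B's}.

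There is, however, a concrete computational error in the critical case. In~(3.1) the Newton-polygon balance is not between $(x-R)$ and $(D-D_0)^3$ but between $(x-R)$ and $(D-D_0)^{3/2}$: the singular exponent $3/2$ of $T_z$ means $\Phi$ carries a term proportional to $Z^3$ with $Z=\sqrt{1-z/r(x)}$, that is, to $(1-D/D_0)^{3/2}$ in the original variable. Balancing $\Phi_{0,2}X^2$ against $\Phi_{3,0}Z^3$ gives $Z\sim X^{2/3}$ and hence $D-D_0\sim(1-x/R)^{2/3}$, not $(1-x/R)^{1/3}$ (compare Proposition~\ref{pro:Dsing-crit}). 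With the correct exponent $2/3$ for $D$, your ``gains one full exponent'' heuristic does yield $5/3$ for $B$; with your stated $1/3$ it would give $4/3$, contradicting the statement you are proving. The same slip would recur in~(3.2) at the level of $F(x)=xC'(x)$. A smaller point: your last sentence is not quite right, since in the planar-like subcase~(2.1) there is \emph{no} branch point for $\Psi$ before $B'$ becomes singular; the inequality $\rho<R$ follows simply from $\rho=\psi(R)=R\exp(-B'(R))<R$ there, and from $\rho=\psi(\tau)<\tau<R$ in the branch-point subcases.
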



The rest of the section is devoted to the proof of Theorem
\ref{th:mainresult2}, which implies Theorem~\ref{th:mainresult}.
First we study the singularities of $B(x,y)$, which is the most
technical part. Then we study the singularities of $C(x,y)$ and
$G(x,y)$, which are always of the same type since $G(x,y)= \exp
{(C(x,y))}$.

\subsection{Singularity analysis of $B(x,y)$}\label{subsec:singB}

From now on, we assume that $y=y_0$ is a fixed value, and let
$D(x)=D(x, y_0)$. Recall from Equation (\ref{eq:phi}) that $D(x)$
satisfies $\Phi(x,D(x))=0$, where
$$
\Phi(x,z)= \frac{2}{x^{2}} T_z(x,z)
-\log\left(\frac{1+z}{1+y_0}\right) +\frac{xz^{2}}{1+xz}.
$$
Since a 3-connected graph has at least four vertices,  $T(x,z)$ is
$O(x^4)$. If follows that $D(0) = y_0$ and  $\Phi_z(0,0) =
-1/(1+y_0)<0$. It follows from the implicit function theorem  that
$D(x)$ is analytic at $x=0$.

The next result shows that $D(x)$ has a positive  singularity $R$ and that
$D(x)$ is finite at $R$.

\begin{lemma} With the previous assumptions,  $D(x)$ has a positive singularity
$R=R(y)$, and $D(R)$ is also finite.
\end{lemma}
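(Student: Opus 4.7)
The strategy is to combine Pringsheim's theorem with a direct analysis of the defining equation $\Phi(x,D(x))=0$ to rule out blow-up of $D$ at its dominant positive singularity.

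First, I would show $0<R<\infty$. The bound $R>0$ is already a consequence of the implicit function theorem applied in the paragraph preceding the lemma, since $\Phi_z(0,y_0)=-1/(1+y_0)\neq 0$. For finiteness, I would invoke the observation noted at the end of Section~\ref{se:prelimi}: adding 3-connected graphs to $\mathcal{T}$ can only increase the coefficients of $D(x,y_0)$ term by term, so $R$ is bounded above by the corresponding radius $R_0$ of the series-parallel case ($\mathcal{T}=\emptyset$), which is finite by \cite{SP}. Since $D(x,y_0)$ has non-negative Taylor coefficients, Pringsheim's theorem then places its dominant singularity exactly at the positive real point $x=R$.

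The heart of the proof is to verify that the monotone limit $D(R)=\lim_{x\to R^-}D(x)\in[0,+\infty]$ is finite. Suppose for contradiction that $D(x)\to+\infty$ as $x\to R^-$, and evaluate the three summands of
\[\Phi(x,z)=\frac{2}{x^{2}}T_z(x,z)-\log\left(\frac{1+z}{1+y_0}\right)+\frac{xz^{2}}{1+xz}\]
at $z=D(x)$ with $x$ real positive. The first summand is non-negative, because $T_z$ has non-negative power-series coefficients. The third summand can be rewritten as $D-D/(1+xD)$; since $x\to R>0$ and $D\to\infty$, the product $xD\to\infty$, so this contribution behaves like $D-1/R+o(1)$, i.e.\ linearly in $D$. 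The middle summand contributes only $-\log(1+D)+O(1)$, a logarithmic decay. Summing, $\Phi(x,D(x))\to+\infty$, contradicting $\Phi(x,D(x))\equiv 0$. Thus $D(R)<\infty$.

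The main obstacle is ensuring that the asymptotic domination is tight. The positivity $R>0$ is essential: it ensures that $xz^{2}/(1+xz)$ evaluated at $z=D(x)\to\infty$ grows linearly in $D$, and linear growth beats the logarithmic decay of the middle summand. A potential secondary concern---that $D(x)$ might hit a singularity of $T_z$ in the $z$-variable before any branch-point mechanism activates---would only strengthen the argument, since in that case $D(R)\leq r(R)<\infty$ would follow directly from the assumed singular structure of $T$ in (\ref{eq:singT}).
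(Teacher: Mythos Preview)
Your argument is correct. The first half---bounding $R$ above by the series-parallel radius and invoking Pringsheim---matches the paper exactly. For the finiteness of $D(R)$, however, you take a genuinely different route. The paper argues via the \emph{derivative}: it observes that $\Phi_z(x,D(x))<0$ for $x<R$ (since no branch point occurs before $R$) and that $\Phi_{zz}(x,z)>0$, then uses these to preclude $D(x)\to\infty$. You instead work directly with the identity $\Phi(x,D(x))=0$ and compare the linear growth of $xz^2/(1+xz)\sim z$ against the logarithmic term $-\log(1+z)$, which forces a contradiction. Your approach is more elementary and self-contained---it avoids the implicit-function-theorem fact that $\Phi_z<0$ along the curve---while the paper's approach has the side benefit of setting up the sign of $\Phi_z$ for the subsequent case analysis (branch point versus singularity of $T$). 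Either proof is perfectly adequate here.
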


\begin{proof} We first show that $D(x)$ has a finite singularity. Consider the
family of networks without 3-connected components, which corresponds
to  series-parallel networks, and let $D_{\emptyset}(x,y)$ be the
associated GF. It is shown in~\cite{SP} that the radius of
convergence $R_{\emptyset}(y_0)$ of $D_{\emptyset}(x, y_0)$ is
finite for all $y>0$. Since the set of networks enumerated by
$D(x,y_0)$ contains the networks without three-connected components,
it follows that $D_{\emptyset}(x,y) \le D(x,y)$ and $D(x)$ has a
finite singularity $R(y_0)\leq R_{\emptyset}(y_0)$.

Next we show that $D(x)$ is finite at its dominant singularity $R =
R(y_0)$. Since $R$ is the smallest singularity and $\Phi_z(0,0)<0$,
we have $\Phi_z(x, D(x)) < 0$ for $0\le x<R$. We also have
$\Phi_{zz}(x,z) > 0$ for $x,z>0$. Indeed, the first summand in
$\Phi$ is a series with positive coefficients, and all its
derivatives are positive; the other two terms have with second
derivatives $1/(1+z)^2$ and $2x/(1+xz)^3$, which are also positive.
As a consequence, $\Phi_z(x,D(x))$ is an increasing function and
$\lim_{x\to R^-} \Phi(x,D(x))$ exists and is finite. It follows that
$D(R)$ cannot go to infinity, as claimed.
\end{proof}

Since $R$ is the smallest singularity of $D(x)$, $\Phi(x,z)$ is
analytic for all $x<R$ along the curve defined by $\Phi(x,D(x))=0$.
For $x,z>0$ it is clear that $\Phi$ is analytic at $(x,z)$ if and
only if $T(x,z)$ is also analytic. Thus $T(x,z)$ is also analytic
along the curve $\Phi(x,D(x))=0$ for $x<R$. As a consequence, the
singularity $R$ can only have two possible sources:
\begin{itemize}
\item[(a)] A branch-point $(R, D_0)$ when solving  $\Phi(x,z)=0$,
that is, $\Phi$ and $\Phi_z$ vanish at  $(R, D_0)$.

\item[(b)] $T(x,z)$ becomes singular at $(R, D_0)$, so that  $\Phi(x,z)$
is also singular.
\end{itemize}

Case (a) corresponds to case (1) in Theorem~\ref{th:mainresult2}.
For case (b) we assume that the singular exponent of $T(x,z)$ at the
dominant singularity is $5/2$, which corresponds to families of
3-connected graphs coming for 3-connected planar maps, and related
families of graphs. We could allow more general types of singular exponents but they do not appear in the main examples we have analyzed.

The typical situation is case (2.1) in
Theorem~\ref{th:mainresult2}, but (2.2) and (2.3) are also possible.
It is also possible to have a critical situation, where (a) and (b)
both hold, and this leads to case (3.1): this is treated at the end
of this subsection. Finally, a confluence of singularities may also
arise when solving equation
$$
x C'(x,y)= x\exp\left(B'(x C'(x,y),y)\right).
$$

\medskip
\noindent
Indeed the singularity may come from: (a) a branch point when solving the previous equation; or (b) $B(x,y)$ becomes singular at $\rho(y) C'(\rho(y),y)$, where $\rho(y)$ is the singularity of $C(x,y)$.
When the two sources (a) and (b) for the singularity coincide, we are in case (3.2).
This is treated at the end of
Section~\ref{subsec:singC}.

\subsubsection{$\Phi$ has a branch-point at $(R,D_{0})$}
We assume that $\Phi_z(R, D_0)=0$ and that $\Phi$ is analytic at
$(R, D_0)$. We have seen that $\Phi_{zz}(x,z) > 0$ for $x,z>0$.
Under these conditions, $D(x)$ admits a singular expansion near $R$
of the form
\begin{equation}\label{eq:singD}
D(x) = D_{0}+D_{1}X+D_{2}X^{2}+D_{3}X^{3}+O(X^{4}),
\end{equation}
where $X=\sqrt{1-x/R}$, and $D_1=-\sqrt{2R \Phi_x(R,
D_0)/\Phi_{zz}(R, D_0)}$
(see~\cite{FlajoletSedgewig:analytic-combinatorics}). We remark that
$R$ and the $D_{i}$'s depend implicitly on $y_{0}$.

In the next result we find an explicit expression for $D_1$, which is the dominant term in (\ref{eq:singD}).
This puts into perspective the result found in \cite{SP} for series-parallel graphs, where it was shown that $D_1 <0$ for that class.
\begin{prop}\label{proposition:D's}
Consider the singular expansion~(\ref{eq:singD}). Then $D_1 < 0$ is
given by
\begin{equation*}
  D_1 = - \left(\frac{\displaystyle 2R T_{xz}
  -4T_z+\frac{R^3D_{0}^2}{(1+RD_0)^2}}
  {\displaystyle \frac{R^2}{2(1+D_0)^2}+\frac{R^{3}}{\left(1+R D_0\right)^3}+
  T_{zzz}}
  \right)^{1/2},
\end{equation*}
where the partial derivatives of $T$ are evaluated at $(R, D_0)$.
\end{prop}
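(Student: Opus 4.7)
The plan is to apply the standard smooth implicit-function schema of Flajolet and Sedgewick (\cite{FlajoletSedgewig:analytic-combinatorics}) to the equation $\Phi(x,D(x))=0$ defined in \eqref{eq:phi}. Under the branch-point hypotheses already in force---namely $\Phi(R,D_0)=0$, $\Phi_z(R,D_0)=0$, $\Phi_{zz}(R,D_0)\neq 0$, together with analyticity of $\Phi$ at $(R,D_0)$---this schema yields, as the proof already essentially recalls,
\begin{equation*}
D(x) = D_0 - \sqrt{\frac{2R\,\Phi_x(R,D_0)}{\Phi_{zz}(R,D_0)}}\, X + O(X^2), \qquad X = \sqrt{1 - x/R}.
\end{equation*}
The task is therefore reduced to computing $\Phi_x$ and $\Phi_{zz}$ from the explicit formula \eqref{eq:phi} and then simplifying.

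Differentiating \eqref{eq:phi} term by term (the logarithmic summand contributes nothing to $\Phi_x$, while the rational summand $xz^2/(1+xz)$ has $\partial_x = z^2/(1+xz)^2$ and $\partial_{zz} = 2x/(1+xz)^3$) gives
\begin{equation*}
\Phi_x(x,z) = -\frac{4}{x^3}T_z(x,z) + \frac{2}{x^2}T_{xz}(x,z) + \frac{z^2}{(1+xz)^2},
\end{equation*}
\begin{equation*}
\Phi_{zz}(x,z) = \frac{2}{x^2}T_{zzz}(x,z) + \frac{1}{(1+z)^2} + \frac{2x}{(1+xz)^3}.
\end{equation*}
Evaluating at $(R,D_0)$, inserting into the expression for $D_1$, and multiplying numerator and denominator inside the radical by $R^2/2$ produces exactly the formula claimed in the statement.

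It remains to verify that the radicand is strictly positive (so that $D_1$ is real) and that the negative branch is the correct one. Positivity of $\Phi_{zz}(R,D_0)$ is immediate: $T_{zzz}(R,D_0)\geq 0$ since $T$ has non-negative coefficients, and the two remaining summands are manifestly positive. For the numerator, implicit differentiation of $\Phi(x,D(x))=0$ yields $D'(x) = -\Phi_x(x,D(x))/\Phi_z(x,D(x))$; since $\Phi_z(x,D(x))<0$ on $(0,R)$ (as established in the proof of the preceding lemma) while $D(x)$ is strictly increasing, we have $\Phi_x(x,D(x))>0$ on that interval and hence $\Phi_x(R,D_0)\geq 0$ by continuity. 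The sign of $D_1$ is then fixed by the requirement that $D(x)\nearrow D_0$ as $x\nearrow R$, which selects the minus sign. The only delicate point is the strictness $\Phi_x(R,D_0)>0$: its failure is precisely the critical confluence of singularities that gives rise to case~(3.1) of Theorem~\ref{th:mainresult2}, so it is excluded here by virtue of our working in the pure branch-point regime corresponding to case~(1).
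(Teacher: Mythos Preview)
Your computation of $\Phi_x$ and $\Phi_{zz}$ and the resulting formula for $D_1$ is correct and is exactly the route the paper takes. The one genuine flaw is your justification of the \emph{strict} inequality $\Phi_x(R,D_0)>0$. You claim that $\Phi_x(R,D_0)=0$ is ``precisely the critical confluence \dots\ case~(3.1)''. It is not: case~(3.1) is the situation where the branch-point condition $\Phi_z(R,D_0)=0$ holds \emph{and} $T(x,z)$ becomes singular at $(R,D_0)$; it has nothing to do with the vanishing of $\Phi_x$. Your continuity argument from $D'(x)=-\Phi_x/\Phi_z$ only yields $\Phi_x(R,D_0)\ge 0$, so as written the proof does not establish $D_1\neq 0$.

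The paper closes this gap directly from the explicit formula you already computed: observe that
\[
2R\,T_{xz}(R,D_0)-4\,T_z(R,D_0)
   \;=\; R^{3}\,\frac{\partial}{\partial x}\!\left(\frac{2}{x^{2}}\,T_z(x,z)\right)\Big|_{(R,D_0)} \;\ge\; 0,
\]
since $2T_z(x,z)/x^{2}$ is a power series in $x$ with non-negative coefficients (recall $T(x,z)=O(x^4)$). Together with the strictly positive term $R^{3}D_0^{2}/(1+RD_0)^{2}$, this shows the numerator of the radicand is strictly positive in all cases, and hence $D_1<0$ unconditionally within the branch-point regime. Replacing your case-(3.1) sentence by this observation fixes the argument.
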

\begin{proof} We plug the expansion (\ref{eq:singD}) inside (\ref{eq:phi}) and
extract work out the undetermined coefficients $D_i$. The expression
for $D_1$ follows from a direct computation of $\Phi_x$ and
$\Phi_{zz}$, and evaluating at $(R, D_0)$.
To show that $D_{1}$ does not vanish, notice that
$$
2R  T_{x z} -4 T_z = R^3{\partial \over \partial x}
\left(\frac{2}{x^2}T_z(x,z)\right).  
$$

This is positive since   $2/x^2 T_z$ is a series with positive
coefficients. Since $R, D_0 >0 $, the remaining term in  the
numerator inside the square root is clearly positive, and so is the
denominator. Hence $D_1 <0$.
\end{proof}

From the singular expansion of $D(x)$ and the explicit
expression~(\ref{eq:Bexplicit}) of $B(x,y_{0})$ in terms of
$D(x,y_0)$, it is clear that $B(x)=B(x,y_{0})$ also admits a
singular expansion at the same singularity $R$ of the form
\begin{equation}\label{eq:singB}
B(x) = B_{0}+B_{1}X+B_{2}X^{2}+B_{3}X^{3}+O(X^{4}).
\end{equation}
The next result shows that the singular exponent of $B(x)$ is $3/2$,
as claimed. Again, the fact that $B_1=0$ and $B_3 >0$ explains the results found in \cite{SP} for series-parallel graphs.

\begin{prop}\label{proposition:B's}
Consider the singular expansion~(\ref{eq:singB}). Then  $B_{1}=0$
and $B_{3}>0$ is given by
\begin{equation}\label{eq:B3}
  B_3 =
  \frac{1}{3}\left(4T_z-2R T_{xz} -\frac{R^{3}D_{0}^2}{\left(1+RD_{0}\right)^2}\right)D_{1},
\end{equation}
where the partial derivatives of $T$ are evaluated in $(R,D_{0})$.

\end{prop}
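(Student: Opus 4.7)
The plan is to express $B(x)$ via the analytic function $F(u,v)$ obtained by freezing $y=y_0$ in~(\ref{eq:Bexplicit}), so that $B(x)=F(x,D(x))$, and then exploit an algebraic identity that collapses the chain rule. The identity is that the partial $F_v$ vanishes along the curve defined by $\Phi(u,v)=0$. To see this, one computes
\begin{equation*}
F_v(u,v)=T_z(u,v)+\frac{u^{3}v^{2}}{2(1+uv)}+\frac{u^{2}}{2}\log\frac{1+y_0}{1+v},
\end{equation*}
and uses the defining relation $\Phi(u,D(u))=0$, rewritten as $T_z(u,D(u))=\tfrac{u^{2}}{2}\log\tfrac{1+D(u)}{1+y_0}-\tfrac{u^{3}D(u)^{2}}{2(1+uD(u))}$; the three summands of $F_v(u,D(u))$ then cancel pairwise. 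Consequently,
\begin{equation*}
B'(x)=F_u(x,D(x))+F_v(x,D(x))\,D'(x)=F_u(x,D(x)),
\end{equation*}
and the factor $D'(x)$, which alone would contribute a square-root singularity of order $X^{-1}$, is eliminated.

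Since $F_u$ is analytic at $(R,D_0)$, a bivariate Taylor expansion together with $x-R=-RX^{2}$ and $D(x)-D_0=D_1 X+D_2 X^{2}+\cdots$ gives
\begin{equation*}
B'(x)=F_u(R,D_0)+F_{uv}(R,D_0)\,D_1\,X+O(X^{2}).
\end{equation*}
On the other hand, differentiating the ansatz~(\ref{eq:singB}) term-by-term, with $dX/dx=-1/(2RX)$, yields
\begin{equation*}
B'(x)=-\frac{B_1}{2RX}-\frac{B_2}{R}-\frac{3B_3}{2R}X+O(X^{2}).
\end{equation*}
Matching the coefficient of $X^{-1}$ forces $B_1=0$; matching the coefficient of $X^{1}$ gives $B_3=-\tfrac{2R}{3}\,F_{uv}(R,D_0)\,D_1$.

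A direct computation of $F_{uv}$, once more using $\Phi(u,D(u))=0$ to eliminate the logarithmic contribution, yields
\begin{equation*}
F_{uv}(R,D_0)=T_{xz}-\frac{2T_z}{R}+\frac{R^{2}D_0^{2}}{2(1+RD_0)^{2}},
\end{equation*}
with partials of $T$ evaluated at $(R,D_0)$. Substituting this into the expression for $B_3$ and clearing denominators reproduces exactly~(\ref{eq:B3}). For positivity, Proposition~\ref{proposition:D's} gives $D_1<0$; moreover, the radicand in the formula for $D_1$ there must be strictly positive (otherwise $D_1=0$), which is the statement $4T_z-2RT_{xz}-R^{3}D_0^{2}/(1+RD_0)^{2}<0$, so the product of two negative factors gives $B_3>0$.

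The only genuinely non-routine step is recognizing the identity $F_v(u,D(u))\equiv 0$. It is this identity that removes the potential $X^{1}$ term from $B(x)$ and is ultimately responsible for the singular exponent $3/2$ that drives Theorem~\ref{th:mainresult2}(1); once it is in place, the rest is a matter of bookkeeping Taylor coefficients and clearing fractions.
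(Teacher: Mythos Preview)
Your argument is correct and rests on the same underlying identity as the paper's: writing $B(x)=F(x,D(x))$ with $F$ given by~(\ref{eq:Bexplicit}), one has $F_v=\tfrac{x^2}{2}\Phi$, so $F_v(x,D(x))\equiv 0$ and $F_{vv}(R,D_0)=\tfrac{R^2}{2}\Phi_z(R,D_0)=0$. The paper substitutes the expansion of $D(x)$ directly into~(\ref{eq:Bexplicit}) and observes the cancellations a posteriori: $B_1$ carries a factor $\Phi(R,D_0)$, and the $D_2,D_3$ contributions to $B_3$ carry factors $\Phi_z(R,D_0)$ and $\Phi(R,D_0)$, all of which vanish. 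Your route differentiates first, so that $B'(x)=F_u(x,D(x))$ is manifestly free of the singular factor $D'(x)$, and then a one–line Taylor expansion in $X$ suffices. This is a cleaner packaging of the same computation: it explains \emph{a priori} why $B_1=0$ and why $D_2,D_3$ never enter $B_3$, whereas the paper discovers these facts in the course of expanding. Your evaluation of $F_{uv}(R,D_0)$ (using $\Phi(R,D_0)=0$ to remove the logarithm) checks out, and the final positivity argument is the same as the paper's.
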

\begin{proof} We plug the singular expansion~(\ref{eq:singD}) of $D(x)$ into
Equation~(\ref{eq:Bexplicit}) and work out the undetermined
coefficients $B_i$. One can check that $B_{1}=2R^{2}
\Phi(R,D_{0})D_{1}$, which vanishes because $\Phi(x,D(x))=0$.

When computing $B_3$, it turns out that the values $D_2$ and $D_3$
are irrelevant because they appear in a term which contains a factor
$\Phi_z$, which by definition vanishes at  $(R,D_0)$. This
observation gives directly Equation~(\ref{eq:B3}). The fact $B_3\neq
0$ follows from applying the same argument as in the proof of
Proposition~\ref{proposition:D's}, that is, $4T_z - 2R T_{x z}<0$.
Then $B_3>0$ since it is the product of two negative numbers.
\end{proof}

\subsubsection{$\Phi$ is singular at $(R,
D_0)$}\label{sssec:Phi-singular}
In this case we assume that $T(x,z)$ is singular at $(R, D_0)$ and
that $\Phi_z(R, D_0)<0$. The situation where both $T(x,z)$ is
singular and $\Phi_z(R, D_0)=0$ is treated in the next subsection. We start with a technical lemma.

\begin{lemma}\label{lem:boundedT}
The function $T_{zz}$ is bounded at the singular point $(R, D_0)$.
\end{lemma}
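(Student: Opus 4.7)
The plan is to read off the behaviour of $T_{zz}$ directly from the singular expansion~(\ref{eq:singT}) postulated for $T(x,z)$. We are working in case~(b) of the present subsection, where $T$ is itself singular at $(R,D_0)$ with singular exponent $5/2$; in particular $D_0 = r(R)$. Writing $Z = \sqrt{1 - z/r(x)}$, the assumption amounts to an expansion of the form
$$
T(x,z) \;=\; T_0(x) + T_2(x)\,Z^2 + T_4(x)\,Z^4 + T_5(x)\,Z^5 + T_6(x)\,Z^6 + \cdots,
$$
valid in a neighbourhood of $z=r(x)$, with the $T_n(x)$ and $r(x)$ analytic at $x=R$, and with the smallest odd index yielding a nonzero coefficient being $n=5$.

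From here I would differentiate the expansion twice in $z$ by means of $\partial_z Z^n = -\tfrac{n}{2 r(x)}\,Z^{n-2}$, obtaining
$$
T_{zz}(x,z) \;=\; \sum_{n\ge 2}\, \frac{n(n-2)}{4\,r(x)^2}\, T_n(x)\, Z^{n-4}.
$$
Three observations then finish the argument. First, the potentially dangerous term $n=2$ carries the factor $n(n-2)=0$, so no negative power of $Z$ appears in $T_{zz}$. Second, the remaining contributions from even $n\ge 4$ sum to a function of $z$ that is analytic in a full neighbourhood of $r(x)$, since each $Z^{2k}$ is a polynomial in $1-z/r(x)$. Third, the leading odd contribution comes from $n=5$ and is a constant multiple of $T_5(x)\,Z$, which tends to $0$ as $z\to r(x)^-$.

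Putting these three facts together, $T_{zz}$ extends continuously to $(R,D_0)$ with a finite value, determined entirely by the analytic even part of the expansion. I do not anticipate any substantive obstacle: the whole lemma is a mechanical consequence of the shape of~(\ref{eq:singT}), and the only point that merits an explicit mention is the vanishing of the would-be $Z^{-2}$ term, which is automatic from $\partial_{zz}(1-z/r(x))\equiv 0$.
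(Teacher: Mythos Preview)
Your computation is internally sound, but it rests on a hypothesis that is not yet in force when the lemma is stated. At this point in Section~3.1.2 the only standing assumptions are that $T$ is singular at $(R,D_0)$ and that $\Phi_z(R,D_0)<0$; the restriction to singular exponent $5/2$ is introduced \emph{after} the lemma, and the paper invokes the lemma precisely to motivate that restriction (the boundedness of $T_{zz}$ forces its singular exponent to be positive, hence that of $T$ to exceed~$2$). By assuming the $5/2$ shape from the outset you have inverted the logic: you show that exponent $5/2$ implies $T_{zz}$ bounded, while the paper wants $T_{zz}$ bounded (deduced from $\Phi_z<0$ alone) in order to constrain the admissible exponents. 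Under the hypotheses actually available, your expansion could a priori contain a $Z^3$ term in $T$, and then your formula would produce a nonzero $Z^{-1}$ contribution to $T_{zz}$.

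The paper's argument is entirely different and does not use the singular expansion. Differentiating $\Phi$ in $z$ gives
\[
\Phi_z(x,z)=\frac{2}{x^2}T_{zz}(x,z)-\frac{1}{1+z}+1-\frac{1}{(1+xz)^2},
\]
so the hypothesis $\Phi_z(R,D_0)<0$ yields directly $\frac{2}{R^2}T_{zz}(R,D_0)<\frac{1}{1+D_0}+\frac{1}{(1+RD_0)^2}-1<1$, hence $T_{zz}(R,D_0)<R^2/2$. This buys a proof valid for any singular type of $T$, and the explicit bound $R^2/2$ is reused in the critical case of Section~3.1.3 (where $\Phi_z(R,D_0)=0$ and the inequality becomes $\le$). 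Your route, by contrast, only certifies finiteness and only once the exponent has already been pinned down.
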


\begin{proof} By differentiating Equation~(\ref{eq:phi}) with respect to $z$ we
obtain
$$
\Phi_z(x,z)= \frac{2}{x^{2}} T_{zz}(x,z)
-\frac{1}{1+z}-\frac{1}{(1+xz)^2}+1.
$$
Since $\Phi_z(R, D_0)<0$, we have
 $$
 {2\over R^2}T_{zz}(R,D_0) < {1\over 1+D_0}
+ {1\over (1+RD_0)^2}-1 < 1.
$$
Hence $T_{zz}(R,D_0) < R^2/2$.
\end{proof}

Let us consider now the singular expansions of $\Phi$ and $T$ in
terms of $Z=\sqrt{1-z/r(x)}$, where $r(x)$ is the dominant
singularity. Note that, by Equation~(\ref{eq:phi}), $\Phi$ and $T_z$
have the same  singular behaviour. By Lemma~\ref{lem:boundedT}, the
singular exponent $\alpha$ of the dominant singular term $Z^\alpha$
of $T_{zz}$ must be greater than $0$ and, consequently, the singular
exponent of $T_z$ and $\Phi$ is greater than $1$. As discussed
above, we only study the case where the singular exponent of
$T(x,z)$ is $5/2$ (equivalently, the singular exponent of
$\Phi(x,z)$ is $3/2$), which corresponds to several families of
three-connected graphs arising from maps. That is, we assume that
$T$ has a singular expansion of the
 form
$$
  T(x,z) = T_0(x) + T_2(x) Z^2 + T_4(x) Z^4 + T_{5}(x) Z^5 + O(Z^6),
$$
where $Z = \sqrt{1-z/r(x)}$, and the functions $r(x)$ and $T_i(x)$
are analytic in a neighborhood of $R$. Notice that $r(R)=D_0$. Since
we are assuming that the singular exponent is $5/2$, we have that
$T_5(R) \ne 0$.

We introduce now the Taylor expansion of the coefficients $T_i(x)$
at $R$. However, since we aim at computing the singular expansions
of $D(x)$ and $B(x)$ at $R$, we expand in even powers of
$X=\sqrt{1-x/R}$:
\begin{eqnarray}\label{eq-singT}
  T(x,z)   &= & T_{0,0}+ T_{0,2}X^2 + O(X^4) \\
               &&+\left(T_{2,0}+ T_{2,2}X^2 + O(X^4)\right)\cdot Z^2 \nonumber \\
               & &+\left(T_{4,0}+ T_{4,2}X^2 + O(X^4)\right)\cdot Z^4 \nonumber \\
               &&+\left(T_{5,0}+ T_{5,2}X^2 + O(X^4)\right)\cdot Z^5 +O(Z^6). \nonumber
\end{eqnarray}
Notice that $T_{5,0} = T_5(R) \ne 0$.
Similarly, we also consider the expansion of $\Phi$ given by
\begin{eqnarray}\label{eq-singPhi}
  \Phi(x,z)  &= & \Phi_{0,0}+ \Phi_{0,2}X^2 + O(X^4) \\
               & &+\left(\Phi_{2,0}+ \Phi_{2,2}X^2 + O(X^4)\right)\cdot Z^2 \nonumber \\
               & &+\left(\Phi_{3,0}+ \Phi_{3,2}X^2 + O(X^4)\right)\cdot Z^3 +O(Z^4),\nonumber
\end{eqnarray}
where $\Phi_{2,0}\neq 0$ because $\Phi_z(R, D_0)<0$.

The next result shows that $D_1=0$ and $D_3>0$. This was proved in \cite{bender} for the class of planar graphs, but there was no obvious reason explaining this fact. Now we see it follows directly from our general assumptions on $T(x,z)$, which are satisfied when $T(x,z)$ is the GF of 3-connected planar graphs.

\begin{prop}\label{pro:Dsing}
The function $D(x)$ admits the following singular expansion
$$
  D(x) =  D_0 + D_2 X^2 + D_3 X^3 + O(X^4),
$$
where $X = \sqrt{1-x/R}$. Moreover,
$$
  D_2 = D_0 \, \frac{P}{Q}-Rr', \qquad
  D_3 = -\frac{ 5 T_{5,0} (-P)^{3/2}}{R^2 \, Q^{5/2}} > 0,
$$
where $r'$ is the evaluation of the derivative $r'(x)$ at $x=R$, and
$P<0$ and $Q>0$ are given by
\begin{align*}
  P =\, \Phi_{0,2} = & -\frac{4T_{2,0}+ 2T_{2,2}}{R^2D_0}
  - \frac{2T_{2,0}r'}{RD_0^2}
  + \frac{Rr'}{1+D_0}
  - \frac{RD_0(D_0+(2+RD_0)Rr')}{(1+RD_0)^2}, \\
  Q =\, \Phi_{2,0} =& -\frac{4T_{4,0}}{R^2D_0}+\frac{D_0}{1+D_0}-\frac{2RD_0^2}{1+RD_0}
  + \frac{R^2D_0^3}{(1+RD_0)^2}.
\end{align*}
\end{prop}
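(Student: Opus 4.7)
The approach is to substitute the Puiseux-type ansatz $D(x)=D_{0}+D_{1}X+D_{2}X^{2}+D_{3}X^{3}+O(X^{4})$, with $X=\sqrt{1-x/R}$, into the defining relation $\Phi(x,D(x))=0$ and solve order by order in $X$, using the bivariate singular expansion~\eqref{eq-singPhi} of $\Phi$. A useful preliminary is that $x=R(1-X^{2})$ is even in $X$, so every function analytic in $x$ at $R$ expands with only even powers of $X$; this explains why the coefficient of each $Z^{i}$ in~\eqref{eq-singPhi} carries only even powers of $X$, and it gives $r(x)=D_{0}-Rr'X^{2}+O(X^{4})$.

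First I would expand $Z^{2}$ along the curve $z=D(x)$:
\[
\left.Z^{2}\right|_{z=D(x)}=\frac{r(x)-D(x)}{r(x)}=-\frac{D_{1}}{D_{0}}X-\frac{Rr'+D_{2}}{D_{0}}X^{2}-\frac{D_{3}}{D_{0}}X^{3}+O(X^{4}).
\]
Substituting into $\Phi(x,D(x))=0$, the $X^{0}$ coefficient reduces to $\Phi_{0,0}=\Phi(R,D_{0})=0$, which holds by continuity. The $X^{1}$ coefficient is $-\Phi_{2,0}D_{1}/D_{0}$; since $\Phi_{2,0}=Q\neq 0$, this forces $D_{1}=0$, justifying the stated form of the expansion. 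The $X^{2}$ coefficient yields
\[
\Phi_{0,2}-\Phi_{2,0}\,\frac{Rr'+D_{2}}{D_{0}}=0,
\]
so $D_{2}=D_{0}P/Q-Rr'$. For the $X^{3}$ coefficient only two contributions survive: $-\Phi_{2,0}D_{3}/D_{0}$ from $\Phi_{2,0}Z^{2}$, and $\Phi_{3,0}(-P/Q)^{3/2}$ from $\Phi_{3,0}Z^{3}$ (using $Z=\sqrt{-P/Q}\,X+O(X^{2})$). Setting the sum to zero gives $D_{3}=D_{0}\Phi_{3,0}(-P)^{3/2}/Q^{5/2}$.

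It remains to identify $P=\Phi_{0,2}$, $Q=\Phi_{2,0}$, and $\Phi_{3,0}$ from~\eqref{eq:phi}. This is a direct two-variable Taylor expansion along $z=r(x)(1-Z^{2})$ of the three summands defining $\Phi$: the logarithm and the rational term are analytic in $z$ and contribute only even powers of $Z$, while $T_z$ is expanded using $\partial Z/\partial z=-1/(2r(x)Z)$, giving $T_z=-(2r(x))^{-1}\sum_{i}i\,T_{i}(x)Z^{i-2}$. The $Z^{3}$ coefficient arises only from the $i=5$ term, so $\Phi_{3,0}=-5T_{5,0}/(R^{2}D_{0})$, and substitution produces the announced formula for $D_{3}$.

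The main obstacle beyond the bookkeeping is sign verification. $Q>0$ follows from the relation $\Phi_{z}(R,D_{0})=-Q/D_{0}$ (obtained by differentiating the $Z$-expansion of $\Phi(R,z)$ at $z=D_{0}$) together with the hypothesis $\Phi_z(R,D_{0})<0$. Then $P<0$ is forced by the consistency requirement that $Z^{2}=-P/Q\cdot X^{2}+O(X^{3})\geq 0$ for small $X>0$. Finally, $D_{3}>0$ reduces to $T_{5,0}<0$, which holds because $T$ has non-negative coefficients and singular exponent $5/2$: the transfer theorem forces $T_{5,0}/\Gamma(-5/2)>0$, and $\Gamma(-5/2)<0$.
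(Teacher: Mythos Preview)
Your argument is correct and follows essentially the same route as the paper: both substitute into the bivariate singular expansion~\eqref{eq-singPhi} and solve by indeterminate coefficients, the only cosmetic difference being that the paper first solves $\Phi(X,Z(X))=0$ for $Z(X)$ and then sets $D(x)=r(x)(1-Z(X)^{2})$, while you expand $D$ directly. The sign verifications differ slightly: the paper obtains $P<0$ from a relation expressing $\Phi_{x}$ in terms of $P$ and $Q$ together with $\Phi_{x}(R,D_{0})>0$, and deduces $D_{3}>0$ from the positivity of $D''(x)$ near $R$, whereas you argue via $Z^{2}\ge 0$ and the transfer theorem applied to $T$; both pairs of arguments are valid.
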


\begin{proof} We consider Equation~(\ref{eq-singPhi}) as a power series
$\Phi(X,Z)$, where  $X = \sqrt{1-x/R}$ and $Z = \sqrt{1-z/D_0}$. We
look for a solution $Z(X)$ such that $\Phi(X, Z(X)) = 0$; we also
impose $Z(0)=0$, since $\Phi_{0,0}=\Phi(R,D_0)=0$. Define  $D(x)$ as
$$
  D(x) = r(x) (1-Z(X)^2),
$$
which satisfies  $\Phi(x, D(x))=0$ 
and $D(R)  = D_0$. By indeterminate coefficients we obtain
$$
  Z(X) = \pm\, \sqrt{\frac{-\Phi_{0,2}}{\Phi_{2,0}}} X +
         \frac{\Phi_{3,0}\,\Phi_{0,2}}{2\,{\Phi_{2,0}}^2} X^2 +
         O(X^3),
$$
where the sign of the coefficient in $X$ is determined later. Now we
use this expression and the Taylor series of the analytic function
$r(x)$ at $x=R$ to obtain the following singular expansion for
$D(x)$:
$$
  D(x) = D_0 + \left(D_0\frac{\Phi_{0,2}}{\Phi_{2,0}}-R r'\right)X^2
  \pm
  D_0\frac{(-\Phi_{0,2})^{3/2}\,\Phi_{3,0}}{{\Phi_{2,0}}^{5/2}}X^3+O(X^4).
$$
Observe in particular that the coefficient of $X$ vanishes. We
define $P=\Phi_{0,2}$ and $Q=\Phi_{2,0}$. The fact that $P<0$ and
$Q>0$ follows from the relations
$$
  \Phi_z = \frac{-1}{D_0} \Phi_{2,0},  \qquad
  \Phi_x = \frac{-1}{R} \Phi_{0,2} +
  \frac{r'}{D_0^2}\Phi_{2,0},
  $$
that are obtained by differentiating Equation~(\ref{eq-singPhi}). We
have  $\Phi_z<0$ by assumption, and  $\Phi_x>0$ following the proof
of Proposition~\ref{proposition:D's}.

The coefficient $D_3$  must have positive sign,  since $D''(x)$ is a
positive function and its singular expansion is $D_{xx}(x) =
3D_3(4R^2)^{-1} X^{-1} +O(1)$. The coefficients $\Phi_{i,j}$ in
Equation~(\ref{eq-singPhi}) are easily expressed in term of the
$T_{i,j}$, and a simple computation gives the result as claimed.
\end{proof}

\begin{prop}\label{prop:singB}
The function $B(x)$ admits the following singular expansion
$$
  B(x) = B_0 + B_2 X^2 + B_4 X^4 + B_5 X^5 + O(X^6),
$$
where $X = \sqrt{1-x/R}$. Moreover,
\begin{eqnarray*}
  B_0 &=& \frac{R^2}{2}\left(D_0+\frac{1}{2}D_0^2 \right) -\frac{1}{2}RD_0+\frac{1}{2}\log\left(1+RD_0\right)
   -\frac{1}{2}(1+D_0)\frac{R^3D_{0}^2}{1+RD_{0}}\\
  &&+T_{0,0}+\frac{1+D_{0}}{D_{0}}T_{2,0}, \\
  B_2 &=& \frac{R^2 D_0(D_0^2 R-2)}{2(1+R D_0)}+T_{0,2}-\left(2\frac{1+D_{0}}{D_0}+\frac{R r'}{D_0}\right)T_{2,0}, \\
  B_4 &=& \left(T_{0,4}+\frac{2R^3D_0^2-R^4D_0^4+2R^2D_0}{4(1+RD_0)^2} \right)+
  \left(\frac{1+D_0+r''}{D_0}\right)T_{2,0}+\frac{P^2}{Q}\frac{R^2 D_0}{4}\\
  && +\left(\frac{2R}{D_0}T_{2,0}+\frac{R^4D_0^2}{2(1+RD_0)^2}\right)r'
  +\frac{R^4}{4}\left(\frac{D_0}{1+D_0} -\frac{1}{(1+RD_0)^2}\right)(r')^2, \\
  B_5 &=& T_{5,0}\left(-\frac{P}{Q}\right)^{5/2} < 0 ,
\end{eqnarray*}
where $P$ and $Q$ are as in  Proposition~\ref{pro:Dsing}, and $r'$
and $r''$ are the derivatives of $r(x)$ evaluated at $x=R$.
\end{prop}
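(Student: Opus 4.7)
The strategy is to substitute the expansion of $D(x)$ from Proposition~\ref{pro:Dsing} into the explicit formula~(\ref{eq:Bexplicit}) and expand in powers of $X=\sqrt{1-x/R}$ up to order $X^5$. Write $B(x)=T(x,D(x))+F(x,D(x))$, where
$$
F(x,z) \;=\; -\tfrac{1}{2}xz + \tfrac{1}{2}\log(1+xz) + \tfrac{x^2}{2}\!\left(z+\tfrac{z^2}{2}+(1+z)\log\tfrac{1+y_0}{1+z}\right)
$$
collects the analytic terms of~(\ref{eq:Bexplicit}). The cornerstone of the whole argument is the \emph{critical-point identity} $T_z(x,z)+F_z(x,z)=0$ along the curve $\Phi(x,z)=0$: differentiating $F$ yields $F_z(x,z)=(x^2/2)\bigl[xz^2/(1+xz)+\log((1+y_0)/(1+z))\bigr]$, while $\Phi=0$ rewrites as $T_z(x,z)=(x^2/2)\bigl[\log((1+z)/(1+y_0))-xz^2/(1+xz)\bigr]$, and the two sum to zero. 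Thus $D(x)$ is a critical point in $z$ of $T+F$ for every $x$, and this is what ultimately forces the odd-order coefficients of $B(x)$ to vanish.

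From $D(x)=D_0+D_2X^2+D_3X^3+O(X^4)$, the analytic Taylor series $r(x)=D_0-Rr'\,X^2+(R^2 r''/2)\,X^4+O(X^6)$, and the relation $Rr'+D_2=D_0 P/Q$ implied by Proposition~\ref{pro:Dsing}, one computes
$$
Z^2 \;=\; 1-\frac{D(x)}{r(x)} \;=\; -\frac{P}{Q}\,X^2 \;-\; \frac{D_3}{D_0}\,X^3 \;+\; O(X^4),
$$
with $-P/Q>0$. Inserting this into the singular expansion~(\ref{eq-singT}) of $T$, the only singular contribution at order $X^5$ is $T_5(x)Z^5$, with $Z^5=(-P/Q)^{5/2}X^5+O(X^6)$; all other contributions up to this order pass through even powers of $Z$ and yield even powers of $X$. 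Hence $B_5=T_{5,0}(-P/Q)^{5/2}$. The sign $B_5<0$ comes from the transfer theorem: since $[z^n]T(x,z)\ge 0$ and the singular exponent of $T$ is $5/2$, the quantity $T_{5,0}/\Gamma(-5/2)$ must be positive, and $\Gamma(-5/2)=-\tfrac{8}{15}\sqrt{\pi}<0$, forcing $T_{5,0}<0$.

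The vanishing of $B_1$ is immediate since $D_1=0$ and the remaining building blocks are series in $X^2$. The vanishing of $B_3$ uses the critical-point identity: in $T(x,D(x))$ the only $X^3$ term comes from $T_2(x)Z^2$ and equals $T_{2,0}(-D_3/D_0)$, while in $F(x,D(x))$ the only $X^3$ term is $F_z(R,D_0)\,D_3$; evaluating $T_z+F_z=0$ at $(R,D_0)$ and using $T_z(R,D_0)=-T_{2,0}/D_0$ (read off by differentiating the singular expansion of $T$ and letting $Z\to 0$) gives $F_z(R,D_0)=T_{2,0}/D_0$, so the two contributions cancel. The same mechanism, applied now to the still-unknown coefficient $D_4$ of $X^4$ in $D(x)$, shows that $D_4$ does not appear in $B_4$: its contribution $F_z(R,D_0)\,D_4$ from $F$ cancels against $T_{2,0}(-D_4/D_0)$ coming through the $X^4$ term of $Z^2$ inside $T_2(x)Z^2$. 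What remains at orders $X^0, X^2, X^4$ is obtained by Taylor expanding $F(R+u,D_0+v)$ with $u=-RX^2$ and $v=D(x)-D_0$, combining with the analytic Taylor coefficients $T_{i,j}$ from~(\ref{eq-singT}), and using $D_2=D_0 P/Q-Rr'$ to eliminate $D_2$; reorganizing yields the explicit formulas for $B_0$, $B_2$, $B_4$ displayed in the statement.

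The main obstacle is the bookkeeping at order $X^4$: one needs $Z^2$ to order $X^4$ (which brings in $r''$), the Taylor expansion of $F$ up to and including mixed second derivatives, and the analytic coefficients $T_{0,4}, T_{2,2}, T_{4,0}$ from~(\ref{eq-singT}); identifying all cancellations (notably the disappearance of $D_4$) and reorganizing the many terms into the compact form of the statement is the only delicate, though routine, part of the argument.
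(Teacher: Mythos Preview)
Your proof is correct and follows the same overall approach as the paper: substitute the singular expansion of $D(x)$ from Proposition~\ref{pro:Dsing} into the explicit formula~(\ref{eq:Bexplicit}) and expand in powers of $X$. The paper's own proof is in fact much terser---it simply says the $B_i$ ``follow by indeterminate coefficients'' and that one ``observes that $B_1$ and $B_3$ vanish identically''.

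Where you go further is in giving a \emph{reason} for these cancellations. Your critical-point identity $T_z+F_z=0$ along $\Phi=0$ (which you verify directly) explains in one stroke why the odd coefficients $B_1,B_3$ vanish and why the unknown coefficient $D_4$ drops out of $B_4$: since $D(x)$ is a critical point of $z\mapsto T(x,z)+F(x,z)$ for each $x$, first-order perturbations of $D$ contribute nothing to $B=T+F$. The paper records these facts as computational observations only. You also supply the sign argument for $B_5$ via the transfer theorem and $\Gamma(-5/2)<0$, which the paper leaves implicit. So the route is the same, but your version is more informative about why the miracles happen.
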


\begin{proof} Our starting point is Equation~(\ref{eq:Bexplicit}) relating
functions $D$, $B$ and $T$. We replace $T$ by the singular expansion
in Equation~(\ref{eq-singT}), $D$ by the singular expansion given in
Proposition~\ref{pro:Dsing},  and we set $x=X^2(1-R)$. The
expressions for $B_i$ follow by indeterminate coefficients.

When performing these computations we observe that the coefficients
$B_1$ and $B_3$ vanish identically, and that several simplifications
occur in the remaining expressions.
\end{proof}

\subsubsection{$\Phi$ has a branch-point and $T(x,z)$ is singular at
$(R, D_0)$}\label{se:criticalB}
This is the first critical situation, and corresponds to case (3.1)
in Theorem~\ref{th:mainresult}. To study this case we proceed
exactly as in the case where $\Phi$ is singular at $(R, D_0)$
(Section~\ref{sssec:Phi-singular}), except that now $\Phi_z(R,
D_0)=0$. It is easy to check that Lemma~\ref{lem:boundedT} still
applies (with the bound $T_{zz}(R, D_0) \leq R^2/2$). As done in the
previous section, we only take into consideration families of graphs
where the singular exponent of $T(x,z)$ is $5/2$ (equivalently, the
singular exponent of $\Phi(x,z)$ is $3/2$).
Equations~(\ref{eq-singT}) and~(\ref{eq-singPhi}) still hold, except
that now $\Phi_{2,0}=0$ because of the branch point at $(R, D_0)$.
This missing term is crucial, as we make clear in the following
analogous of Proposition~\ref{pro:Dsing}.
Notice that $\Phi_{3,0} \ne 0$ because of our assumptions on $T(x,z)$.

\begin{prop}\label{pro:Dsing-crit}
The function $D(x)$ admits the following singular expansion
$$
  D(x) =  D_0 + D_{4/3} X^{4/3} + O(X^{2}),
$$
where $X = \sqrt{1-x/R}$ and
$$
  D_{4/3} = -D_0\left(\frac{-\Phi_{0,2}}{\Phi_{3,0}}\right)^{2/3}.
$$
\end{prop}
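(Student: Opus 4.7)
The plan is to mirror the proof of Proposition \ref{pro:Dsing}, with the dominant balance replaced by the one dictated by the critical condition. Here both sources of singularity coincide, so in addition to $\Phi_{0,0} = \Phi(R, D_0) = 0$ we now also have $\Phi_{2,0} = 0$ (from the branch-point condition on $\Phi$ at $(R, D_0)$), while $\Phi_{0,2} = P \ne 0$ remains and $\Phi_{3,0} \ne 0$ because $T(x,z)$ has singular exponent $5/2$ at $(R, D_0)$, so that $(2/x^2)T_z$ contributes a nonzero $Z^3$ term to $\Phi$.

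First I would insert these vanishings into the bivariate expansion (\ref{eq-singPhi}) and look for a Puiseux solution $Z = Z(X)$ of $\Phi(X, Z(X)) = 0$ with $Z(0) = 0$, where $X = \sqrt{1-x/R}$ and $Z = \sqrt{1-z/r(x)}$. In the non-critical case the dominant balance was $\Phi_{0,2} X^2 + \Phi_{2,0} Z^2$, producing $Z \sim X$; removing the $Z^2$ term shifts the balance to
$$
\Phi_{0,2} X^2 + \Phi_{3,0} Z^3 = O(X^4, X^2 Z^2, Z^4),
$$
which forces
$$
Z(X) = \left(-\frac{\Phi_{0,2}}{\Phi_{3,0}}\right)^{1/3} X^{2/3} + O(X^{4/3}).
$$
Higher-order coefficients follow by indeterminate coefficients but are not needed for the stated expansion.

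Second, I would recover $D$ exactly as in Proposition \ref{pro:Dsing}, via $D(x) = r(x)(1 - Z(X)^2)$. Since $r(x) = D_0 + O(X^2)$ is analytic at $R$ and $Z(X)^2$ starts at order $X^{4/3}$, the leading singular correction to $D_0$ is
$$
D_{4/3}\, X^{4/3} = -D_0 \left(-\frac{\Phi_{0,2}}{\Phi_{3,0}}\right)^{2/3} X^{4/3},
$$
exactly the claimed formula. The remainder is $O(X^2)$: the next contribution comes either from the $O(X^{4/3})$ correction to $Z$, which enters $Z^2$ at order $X^{4/3+2/3} = X^2$, or from the quadratic Taylor correction of $r(x)$.

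The main subtlety, and really the only delicate point, is the selection of branch for the cube root, so that $D_{4/3}$ is a well-defined negative real number and $D(x)$ is the analytic continuation of the network GF as $x \to R^-$. This requires $-\Phi_{0,2}/\Phi_{3,0} > 0$: from Proposition \ref{pro:Dsing} we already know $\Phi_{0,2} = P < 0$, while $\Phi_{3,0} > 0$ follows because $\Phi$ inherits its $Z^3$ term from $(2/x^2)T_z$, so $\Phi_{3,0}$ has sign opposite to $T_{5,0}$, and Proposition \ref{prop:singB} together with $B_5 < 0$ forces $T_{5,0} < 0$. No genuinely new obstacle appears beyond this sign bookkeeping; the computation is essentially a Newton-polygon / indeterminate-coefficient argument once the correct balance is identified.
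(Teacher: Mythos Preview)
Your proposal is correct and follows the same route as the paper: drop $\Phi_{2,0}$ from the bivariate expansion~(\ref{eq-singPhi}), solve $\Phi(X,Z(X))=0$ by the dominant balance $\Phi_{0,2}X^2+\Phi_{3,0}Z^3=0$ to get $Z(X)\sim(-\Phi_{0,2}/\Phi_{3,0})^{1/3}X^{2/3}$, and then read off $D_{4/3}$ from $D(x)=r(x)(1-Z(X)^2)$. One minor point on your added sign discussion: invoking Proposition~\ref{prop:singB} to deduce $T_{5,0}<0$ is circular here, since that proposition assumes $\Phi_{2,0}\ne 0$ (so $Q\ne 0$); the direct argument is that $T(R,z)$ has non-negative Taylor coefficients in $z$, so by transfer the leading singular coefficient $T_{5,0}$ must share the sign of $\Gamma(-5/2)<0$.
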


\begin{proof} As in the proof of Proposition~\ref{pro:Dsing}, we consider a
solution $Z(X)$ of the functional equation $\Phi(X, Z(X))=0$, and
define $D(x)$ as $r(x)(1-Z(X)^2)$. However, the singular development
of $\Phi(x,z)$ is now
\begin{eqnarray*}
  \Phi(x,z)  &= & \Phi_{0,2}X^2 + O(X^4) \\
                &&+\left(\Phi_{2,2}X^2 + O(X^4)\right)\cdot Z^2 \nonumber \\
                &&+\left(\Phi_{3,0}+ \Phi_{3,2}X^2 + O(X^4)\right)\cdot Z^3 +
                 O(Z^4).\nonumber
\end{eqnarray*}
since $\Phi_{2,0} \ne 0$, the only way to get the necessary cancelations in $\Phi(X,Z(X))=$, is that the expansion of $Z(X)$ starts with a $Z^{2/3}$ term.
By indeterminate coefficients we get
$$
Z(X) =
\left(\frac{-\Phi_{0,2}}{\Phi_{3,0}}\right)^{2/3}X^{2/3}+O(X^{4/3}).
$$
To obtain the actual development of $D(x)$ we use the
equalities $D(x)=r(x)(1-Z(X)^2)$ and $r(R)=D_0$.
\end{proof}

Note that $X=\sqrt{1-x/R}$. Consequently the previous result implies
that the singular exponent of $D(x)$ is $2/3$. By using the explicit
integration of $B_y(x,y)$ of Equation~(\ref{eq:Bexplicit}), one can
check that the singular exponent of $B(x)$ is $5/3$ (the first
non-analytic term of $B(x)$ that does not vanish is $X^{10/3}$).
This implies that the subexponential term in the asymptotic of $b_n$
is $n^{-8/3}$, as claimed.

\subsection{Singularity analysis of $C(x,y)$ and $G(x,y)$}\label{subsec:singC}

The results in this section follow the same lines as those in the previous  section. They are technically simpler, since the analysis applies to functions of one variable, whereas the second variable $y$ behaves only as a parameter.
It generalizes the analysis in
Section~$4$ of \cite{gn} and Section~$3$ of \cite{SP}.

Let
$F(x)=xC'(x)$, which is the GF of rooted connected graphs. We know
that $F(x)=x\exp(B'(F(x))$. Then $\psi(u)=u\exp(-B'(u))$ is the
functional inverse of $F(x)$. Denote by $\rho$ the dominant
singularity of $F$. As for 2-connected graphs, there are two
possible sources for the singularity:

\begin{itemize}
  \item[(1)] There exists $\tau \in (0,R)$ (necessarily unique)
  such that $\psi'(\tau)=0$. We have a branch point and by the inverse
  function theorem $\psi$ ceases to be invertible at $\tau$. We have   $\rho = \psi(\tau)$.
  \item[(2)] We have $\psi'(u) \ne 0$ for all $u \in (0,R)$, and there
  is no obstacle to the analyticity of the inverse function.
  Then $\rho = \psi(R)$.
\end{itemize}

The critical case  where both sources for singularity coincide  is discussed at the end of this subsection. Notice that this happens precisely when $\psi'(R) = 0$.

Condition $\psi'(\tau)=0$ is equivalent to $B''(\tau) = 1/\tau$.
Since $B''(u)$ is increasing (the series $B(u)$ has positive
coefficients) and $1/u$ is decreasing, we are in case (1) if $B''(R)
> 1/R$, and in case (2) if $B''(R) < 1/R$. As we have already
discussed, series-parallel graphs correspond to case (1) and planar
graphs to case (2). In particular, if $B$ has singular exponent
$3/2$, like for series-parallel graphs, the function $B''(u)$ goes
to infinity when $u$ tends to $R$, so there is always a solution
$\tau < R$ satisfying $B''(\tau) = 1/\tau$. This explains why in
Theorem~\ref{th:mainresult} there is no case where $b_n$ has
sub-exponential growth $n^{-5/2}$ and $c_n$ has $n^{-7/2}$.

\begin{prop}\label{proposition:C's}
The value $S=RB''(R)$ determines the singular exponent  of $C(x)$
and~$G(x)$ as follows:
\begin{enumerate}
\item[(1)]  If $S>1$, then $C(x)$ and $G(x)$ admit the singular
expansions
\begin{eqnarray*}
  C(x) &=& C_0+C_2 X^2 + C_3 X^3 + O(X^4), \\
  G(x) &=& G_0+G_2 X^2 + G_3 X^3 + O(X^4),
\end{eqnarray*}
where $X=\sqrt{1-x/\rho}$, $\rho=\psi(\tau)$, and $\tau$ is the
unique solution to $\tau B''(\tau)=1$. We have
\begin{align*}
  C_0 &= \tau(1+\log \rho-\log \tau)+B(\tau), & C_2 &= -\tau, \\
  C_3 &= \frac{3}{2}\sqrt{\frac{2\rho \exp{\left(B'(\rho)\right)}}{\tau B'''(\tau)-\tau B''(\tau)^2+2B''(\tau)}}, \\
  G_0 &= e^{C_0},   \qquad G_2 = C_2e^{C_0},  & G_3 &=C_3e^{C_0}.
\end{align*}
\item[(2)] If $S<1$, then $C(x)$ and $G(x)$ admit the singular
expansions
\begin{align*}
  C(x) &= C_0+C_2 X^2 + C_4 X^4 + C_5 X^5 + O(X^6), \\
  G(x) &= G_0+G_2 X^2 + G_4 X^4 + G_5 X^5 + O(X^6),
\end{align*}
where $X=\sqrt{1-x/\rho}$, $\rho=\psi(R)$. We have
\begin{align*}
  C_0 &= \tau(1+\log \rho-\log R)+B_0,     & C_2 &= -R, \\
  C_4 &= -\frac{RB_4}{2B_4-R},             & C_5 &= B_5 \left(1-\frac{2B_4}{R}\right)^{-5/2}, \\
  G_0 &= e^{C_0},                          & G_2 &= C_2 e^{C_0}, \\
  G_4 &= \left(C_4+\frac{1}{2}{C_2}^2\right)e^{C_0},  & G_5 &= C_5 e^{C_0},
\end{align*}
where $B_0$, $B_4$ and $B_5$ are as in Proposition~\ref{prop:singB}.
\end{enumerate}
\end{prop}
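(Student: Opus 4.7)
The plan is to reduce both cases to an analysis of the functional inverse $\psi(u) = ue^{-B'(u)}$, which satisfies $\psi(F(x)) = x$ for $F(x) := xC'(x)$. Changing variables by $t = \psi(u)$ in the integral $C(x) = \int_0^x F(t)/t\, dt$ (together with an integration by parts) produces the closed form
\begin{equation*}
  C(x) = F(x)\bigl(1 + \log x - \log F(x)\bigr) + B(F(x)),
\end{equation*}
where we use $B'(F) = \log(F/x)$. This reduces each case to (i)~computing the singular expansion of $F$ at its dominant singularity $\rho$, and (ii)~substituting into this closed form; the expansion of $G = \exp C$ then follows by direct exponentiation.

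For Case (1) ($S > 1$), continuity and monotonicity of $u \mapsto uB''(u)$ on $[0,R)$, together with $uB''(u)|_{u=0}=0$ and $RB''(R) > 1$, yield a unique $\tau \in (0, R)$ with $\tau B''(\tau) = 1$, equivalently $\psi'(\tau) = 0$. A short computation gives
\begin{equation*}
\psi''(\tau) = -(\rho/\tau)\bigl(\tau B'''(\tau) - \tau B''(\tau)^2 + 2B''(\tau)\bigr),
\end{equation*}
which is negative by the same positivity argument used in Proposition~\ref{proposition:D's}, so $\psi$ has a simple quadratic critical point at $\tau$. Standard inversion gives $F(x) = \tau + F_1 X + F_2 X^2 + F_3 X^3 + O(X^4)$ with $X = \sqrt{1-x/\rho}$ and $F_1 = -\sqrt{-2\rho/\psi''(\tau)}$. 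Writing $\delta := F - \tau$ and expanding the closed form for $C$ as a series in $\delta$ and $X^2$, the crucial simplification is that the coefficient of $\delta^2$ equals $B''(\tau)/2 - 1/(2\tau)$, which vanishes by the defining equation of $\tau$. Consequently the expansion of $C$ up to $X^3$ is determined by the constant term (giving $C_0$), the explicit $-F\cdot X^2$ contribution coming from $\log(1 - X^2)$ (giving $C_2 = -\tau$), and the combination of $-\delta X^2$ with the $\delta^3(1 + \tau^2 B'''(\tau))/(6\tau^2)$ term. Collecting these and simplifying using $\tau B''(\tau) = 1$ yields the stated value of $C_3$.

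For Case (2) ($S < 1$) we have $\psi'(u) > 0$ on $(0, R)$ with positive limit at $R$, so $F$ extends continuously to $\rho = \psi(R)$ with $F(\rho) = R$, and the singularity of $F$ is inherited from that of $B'$ at $R$. Term-by-term differentiation of the expansion of $B$ furnished by Proposition~\ref{prop:singB}, using $Y = \sqrt{1-u/R}$, gives
\begin{equation*}
  B'(u) = -\tfrac{B_2}{R} - \tfrac{2B_4}{R}Y^2 - \tfrac{5B_5}{2R}Y^3 + O(Y^4),
\end{equation*}
whence
\begin{equation*}
  \psi(u) = \rho - \rho\bigl(1 - 2B_4/R\bigr)Y^2 + \tfrac{5\rho B_5}{2R}Y^3 + O(Y^4).
\end{equation*}
Setting $\lambda := 1 - 2B_4/R > 0$ (by the hypothesis $S < 1$) and inverting $X^2 = \lambda Y^2 - (5B_5/(2R))Y^3 + O(Y^4)$ yields $Y = X/\sqrt{\lambda} + O(X^2)$, from which the claimed square-root expansion of $F(x) = R(1-Y^2)$ follows. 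Substituting this into the closed form of $C$ and separating analytic contributions from the non-analytic odd-power terms produces $C_0, C_2, C_4, C_5$; in particular $C_5 = B_5\,\lambda^{-5/2}$ arises as the first non-analytic contribution transported through the inverse.

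The main technical obstacle is the nested-expansion bookkeeping in Case (2): one has a square-root expansion of $B$ in $Y$ composed with a square-root expansion of $F$ in $X$, and to extract the precise coefficient $C_4$ one must combine the analytic $Y^4$ term of $B$ with the $O(X^4)$ correction to the inversion $Y = X/\sqrt{\lambda}$. The passage from $C$ to $G = \exp(C)$ is then routine, with the Taylor expansion of $\exp$ producing the stated formulas; the only nontrivial contribution beyond $G_k = C_k e^{C_0}$ is the $\tfrac{1}{2}C_2^2$ term that appears inside $G_4$.
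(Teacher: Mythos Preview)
Your proposal is correct and follows essentially the same route as the paper: invert $\psi(F(x))=x$ to get the singular expansion of $F=xC'$, then recover the coefficients of $C$ and pass to $G=\exp(C)$. The one place where you are more explicit is the integration step for $C_0$: the paper simply cites the ``integration techniques developed in~\cite{SP} and~\cite{gn}'', whereas you carry out the change of variables $t=\psi(u)$ and integration by parts to obtain the closed form $C(x)=F(x)\bigl(1+\log x-\log F(x)\bigr)+B(F(x))$, which is exactly what those techniques produce and which makes the substitution argument self-contained.
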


\begin{proof} The two cases $S>1$ and $S<1$ arise from the previous discussion. In
case (1) we follow the proof  of Theorem~3.6 from~\cite{SP}, and in
case (2) the proof of  Theorem~1 from~\cite{gn}.

First, we obtain the singular expansion of $F(x)=xC'(x)$ near
$x=\rho$. This can be done by indeterminate coefficients in the
equality $ \psi( F(x) )= x = \rho(1-X^2)$, with $X=\sqrt{1-x/\rho}$.
The  expansion of $\psi$ can be either at $\tau=F(\rho)$ where it is
analytic, or at $R=F(\rho)$ where it is singular.

From the singular expansion of $F(x)$ we obtain $C_2$ and $C_3$ in
case (1), and $C_2$, $C_4$ and $C_5$ in case (2)  by direct
computation. To obtain $C_0$, however, it is necessary to compute
$$
  C(x) = \int_0^x \frac{F(t)}{t}\, dt,
$$
and this is done using the integration techniques developed
in~\cite{SP} and~\cite{gn}.
Finally, the coefficients for $G(x)$ are obtained directly from the
general relation $G(x) = \exp(C(x))$.
\end{proof}

To conclude this section we consider the critical case where both
sources of the dominant singularity $\rho$ coincide, that is, when
$\psi'(R)=0$. In this case $\psi$ is singular at $R$ because $R$ is
the singularity of $B(x)$, and at the same time the inverse $F(x)$
is singular at $\rho = \psi(R)$ because of the inverse function
theorem.
As we have shown before, this can only happen if $B(x)$ has
singular exponent $5/2$.

The argument is now as in the proof of Proposition \ref{pro:Dsing-crit}.
The singular development of $\psi(z)$
in terms of $Z=\sqrt{1-z/R}$ must be of the form
$$\psi(z) = \psi_{0} + \psi_2 Z^2 + \psi_3 Z^3 + O(Z^4), $$
where in addition $\psi_2$ vanishes due to $\psi'(R)=0$. A
similar analysis as that in Proposition \ref{pro:Dsing-crit} shows that
the singular exponent of $C(x)$ is $5/3$. Indeed, since
$\psi(F(x))=x=\rho(1-X^2)$, we deduce that the development of $F(x)$
in terms of $X=\sqrt{1-x/\rho}$ is
$$ F(x) = \rho + \left(\frac{-\rho^{5/3}}{\psi_3^{2/3}}\right)X^{4/3}
+ O(X^2).$$
Thus we obtain, by integration of $F(x)=xC'(x)$, that the singular
exponent of $C(x)$ is $5/3$, so that the subexponential term in the
asymptotic of $c_n$ is $n^{-8/3}$.  Since $G(x) = \exp (C(x))$, the
same exponents hold for $G(x)$ and $g_n$.

\section{Limit laws}\label{se:laws}

In this section we discuss parameters of random graphs from a closed
family whose limit laws  do not depend on the singular behaviour of
the GFs involved. As we are going to see, only the  constants
associated to the first two moments depend on the singular
exponents.

The parameters we consider are asymptotically either normal or
Poisson distributed. The number of edges, number of blocks, number
of cut vertices, number of copies of a fixed block, and number of
special copies of a fixed subgraph are all normal. On the other
hand, the number of connected components is Poisson. The size of the
largest connected component (rather, the number of vertices not in
the largest component) also follows a discrete limit law. A
fundamental extremal parameter, the size of the largest block, is
treated in the next section, where it is shown that the asymptotic
limit law depends very strongly on the family under consideration.

As in the previous section, let $\mathcal{G}$ be a closed family of
graphs. For a fixed value of $y$, let $\rho(y)$ be the dominant
singularity of $C(x,y)$, and let $R(y)$ be that of $B(x,y)$. We
write $\rho = \rho(1)$ and $R=R(1)$.  Recall that $B'(x,y)$ denotes
the derivative with respect to $x$.

When we speak of cases (1) and (2), we refer to the statement of
Proposition~\ref{proposition:C's}, which are exemplified,
respectively, by series-parallel and planar graphs. That is, in case
(1) the singular dominant term in $C(x)$ and $G(x)$ is
$(1-x/\rho)^{3/2}$, whereas in case (2) it is $(1-x/\rho)^{5/2}$.
Recall from the previous section that in case (1) we have $\rho(y) =
\tau(y)\exp{\left(-B'(\tau(y),y)\right)}$, where $\tau(y)
B''(\tau(y)) = 1$. In case (2) we have $\rho(y) =
R(y)\exp{\left(-B'(R(y),y)\right)}$.

\subsection{Number of edges}

The number of edges obeys a limit normal law, and the asymptotic
expression for the first two moments is always given in terms of the
function $\rho(y)$ for connected graphs, and in terms of $R(y)$ for
2-connected graphs.

\begin{theorem}\label{th:edges}
The number of edges in a random graph from $\mathcal{G}$ with $n$
vertices  is asymptotically normal, and the mean $\mu_n$ and
variance $\sigma_n^2$ satisfy
\begin{equation*}
\mu_n \sim \kappa n, \qquad \sigma_n^2 \sim \lambda n,
\end{equation*}
where
$$
  \kappa = -{\rho'(1) \over \rho(1)}, \qquad
  \lambda = -{\rho''(1) \over \rho(1)}  -{\rho'(1) \over \rho(1)}
    + \left( {\rho'(1) \over \rho(1)}\right)^2.
   $$
The same is true, with the same constants, for \emph{connected}
random graphs.

The number of edges in a random 2-connected graph from $\mathcal{G}$
with $n$ vertices  is asymptotically normal, and the mean $\mu_n$
and variance $\sigma_n^2$ satisfy
\begin{equation*}
\mu_n \sim \kappa_2 n, \qquad \sigma_n^2 \sim \lambda_2 n,
\end{equation*}
where
$$
  \kappa_2 = -{R'(1) \over R(1)}, \qquad
  \lambda_2 = -{R''(1) \over R(1)}  -{R'(1) \over R(1)}
    + \left( {R'(1) \over R(1)}\right)^2.
   $$
\end{theorem}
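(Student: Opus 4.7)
The plan is to apply Hwang's quasi-powers theorem to the bivariate generating function $G(x,y)$, where $y$ marks the number of edges. The probability generating function of the edge-count in a uniform random graph of $\mathcal{G}$ with $n$ vertices is
$$
p_n(y) \;=\; \frac{n!\,[x^n]G(x,y)}{g_n}.
$$
The key input is that the analysis of Section~\ref{se:asympt} works uniformly for $y$ in a complex neighborhood of $y=1$: the dominant singularity $\rho(y)$ of $G(x,y)$ depends analytically on $y$ at $y=1$ (via the implicit function theorem applied to the defining equation $\tau(y)B''(\tau(y),y)=1$ in case (1), or via the algebraic equation defining $R(y)$ in case (2)), and all the coefficients in the singular expansion of $G(x,y)$ from Proposition~\ref{proposition:C's} are likewise analytic in $y$. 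Consequently, by the transfer theorem applied with a parameter,
$$
n!\,[x^n]G(x,y) \;\sim\; c(y)\,\rho(y)^{-n} n^{-\alpha-1}\,n!
$$
uniformly in $y$ near $1$, where $\alpha \in \{3/2, 5/2\}$ is the singular exponent (the same for all $y$ in a neighborhood of $1$) and $c(y)$ is analytic with $c(1)>0$.

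Dividing by $g_n \sim c(1)\rho(1)^{-n}n^{-\alpha-1}n!$ yields
$$
p_n(y) \;\sim\; \frac{c(y)}{c(1)} \left(\frac{\rho(1)}{\rho(y)}\right)^{\!n},
$$
which is precisely the quasi-power form $A(y)B(y)^n$ with $A(1)=B(1)=1$. The variability condition of Hwang's theorem reduces to $(\log\rho(y))''|_{y=1}\neq 0$, which holds generically (and can be verified from the explicit form of $\rho(y)$ in the classes under study). Hence the number of edges is asymptotically Gaussian, and the mean and variance are obtained by differentiating $\log p_n(y) = \log(c(y)/c(1)) - n\log(\rho(y)/\rho(1))$ twice at $y=1$. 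The $O(1)$ contribution from $c(y)$ is irrelevant to the leading-order terms, and a direct computation gives
$$
\mu_n \sim -n\,\frac{\rho'(1)}{\rho(1)} = \kappa n, \qquad
\sigma_n^2 \sim n\!\left(-\frac{\rho''(1)}{\rho(1)} + \left(\frac{\rho'(1)}{\rho(1)}\right)^{\!2} - \frac{\rho'(1)}{\rho(1)}\right) = \lambda n.
$$

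For the connected version, one uses $C(x,y)$ in place of $G(x,y)$; since $G(x,y)=\exp(C(x,y))$, the functions $C$ and $G$ share the same dominant singularity $\rho(y)$ and singular exponent, so the same constants $\kappa,\lambda$ appear. For the $2$-connected version, one repeats the argument with $B(x,y)$ in place of $G(x,y)$, using the uniform analysis of $B(x,y)$ from Section~\ref{subsec:singB} (Propositions~\ref{proposition:B's} and~\ref{prop:singB}), whose coefficients are again analytic in $y$; this produces $\kappa_2,\lambda_2$ expressed via $R(y)$.

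The main obstacle is the uniformity of the singularity analysis in $y$. One has to show that the local singular expansion of $G(x,y)$ (respectively $B(x,y)$) remains valid and has coefficients analytic in $y$ throughout a complex disc around $y=1$, and that there is no secondary singularity approaching $\rho(y)$ as $y$ varies in this disc. Both points follow from the explicit formulas derived in Section~\ref{se:asympt}: the singularity $\rho(y)$ is either a simple branch point or an analytically-moving algebraic singularity coming from $T(x,z)$, and in either case a standard perturbative argument (analyticity of the implicit function solving $\Phi(x,z)=0$ jointly in $(x,z,y)$, combined with the Weierstrass preparation theorem) extends the univariate expansion to the bivariate setting with analytic coefficients. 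Once this uniformity is in place, Hwang's theorem delivers both the normal limit law and the stated moment asymptotics.
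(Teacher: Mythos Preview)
Your proposal is correct and follows essentially the same route as the paper: the paper's proof simply defers to \cite{gn} and \cite{SP}, and those references establish the result precisely via the Quasi-Powers Theorem applied to the bivariate generating functions $G(x,y)$, $C(x,y)$, $B(x,y)$, using the uniform-in-$y$ singular expansions obtained in Section~\ref{se:asympt}. Your write-up in fact spells out more of the argument (the quasi-power form, the moment computation, and the uniformity issue) than the paper itself does.
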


\begin{proof} The proof is as in \cite{gn} and \cite{SP}. In all cases the derivatives of
$\rho(y)$ and $R(y)$ are readily computed, and for a given family of
graphs we can compute the constants exactly.
\end{proof}
\subsection{Number of blocks and cut vertices}

Again we have normal limit laws but the asymptotic for the first two
moments depends  on which case we are. In the next statements we set
$\tau = \tau(1)$.

\begin{theorem}\label{th:blocs}
The number of blocks  in a random connected graph from $\mathcal{G}$
with $n$ vertices is asymptotically normal, and the mean $\mu_n$ and
variance $\sigma_n^2$ are linear in $n$. In case (1) we have
\begin{equation*}
\mu_n \sim \log(\tau/ \rho)\, n, \qquad
 \sigma_n^2 \sim \left(\log(\tau/ \rho) - {1 \over 1+\tau^2 B'''(\tau)}\right)\,
 n.
\end{equation*}
In case (2) we have
\begin{equation*}
\mu_n \sim \log(R/\rho) \,n, \qquad \sigma_n^2 \sim \log(R/\rho) \,
n.
\end{equation*}
The same is true, with the same constants, for arbitrary random
graphs.
\end{theorem}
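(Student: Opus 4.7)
The plan is to apply the standard singularity-perturbation/quasi-powers framework of Hwang (see \cite{FlajoletSedgewig:analytic-combinatorics}), extending the block-decomposition in~(\ref{eq:GCB2}) by a secondary variable $u$ that marks blocks. Writing $F(x,u)=xC'(x,u)$ and $\psi(v,u)=v\exp(-uB'(v))$, one has
$$
F(x,u)\;=\;x\exp\bigl(u\,B'(F(x,u))\bigr),\qquad \psi(F(x,u),u)=x,
$$
so that the analysis of Proposition~\ref{proposition:C's} carries over with $u$ as an analytic parameter. Note that $B(x)$ itself is unchanged, so the two possible sources of singularity for $F(x,u)$ (branch point vs.\ inherited singularity of $B$) are precisely the two cases of Proposition~\ref{proposition:C's}, both of which persist for $u$ in a complex neighborhood of~$1$. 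In case (1), the branch point $\tau(u)$ is the smallest positive root of $u\,\tau(u)\,B''(\tau(u))=1$; since $B''(\tau)+\tau B'''(\tau)>0$, the implicit function theorem gives $\tau(u)$ (and hence $\rho(u)=\psi(\tau(u),u)$) analytic near~$1$, with uniform singular exponent $3/2$ in $x$. In case (2) the strict inequality $u\,R\,B''(R)<1$ persists for $u$ near~$1$, so $\rho(u)=R\exp(-uB'(R))$ is directly analytic and the $5/2$ expansion of $B(x)$ from Proposition~\ref{prop:singB} lifts uniformly to a $5/2$ expansion of $C(x,u)$.

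Once this uniformity is in place, the transfer theorem yields
$$
[x^n]\,C(x,u)\;\sim\;K(u)\,\rho(u)^{-n}\,n^{-\alpha-1}
$$
with $K$ and $\rho$ analytic and non-vanishing on a complex neighborhood of~$1$, where $\alpha=3/2$ in case~(1) and $\alpha=5/2$ in case~(2). This is exactly the quasi-powers hypothesis, and it produces a central limit theorem for the number of blocks with
$$
\mu_n\;\sim\;-\,\frac{\rho'(1)}{\rho(1)}\,n,\qquad
\sigma_n^2\;\sim\;\left(-\frac{\rho''(1)}{\rho(1)}+\Bigl(\frac{\rho'(1)}{\rho(1)}\Bigr)^{\!2}-\frac{\rho'(1)}{\rho(1)}\right)n.
$$
The extension to arbitrary (not necessarily connected) random graphs is then immediate from $G(x,u)=\exp(C(x,u))$: $G$ inherits $\rho(u)$ and the same singular exponent as~$C$, so Hwang's theorem applies verbatim and delivers the same constants.

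It remains to evaluate the constants explicitly in each case. In case~(2) this is one line: $\rho'(u)/\rho(u)=-B'(R)$ and $\rho''(u)/\rho(u)=B'(R)^2$, so the mean constant is $B'(R)=\log(R/\rho)$ and a short cancellation makes the variance constant also equal to $\log(R/\rho)$. In case~(1) I would differentiate $u\,\tau(u)B''(\tau(u))=1$ once at $u=1$, obtaining $\tau'(1)=-1/(B''(\tau)+\tau B'''(\tau))$; substituting into $\log\rho(u)=\log\tau(u)-uB'(\tau(u))$ and using the defining relation $\tau B''(\tau)=1$ causes two of the three terms in $\rho'/\rho$ to cancel, leaving $\rho'(1)/\rho(1)=-B'(\tau)=-\log(\tau/\rho)$. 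A second differentiation, again using $\tau B''(\tau)=1$ to simplify, collapses the bracket in the variance formula to $\log(\tau/\rho)-1/(1+\tau^2 B'''(\tau))$, as stated.

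I expect the only real obstacle to be the uniformity in~$u$ of the singular analysis in case~(2): one must check carefully that the non-branch-point singularity $R$ of $B$, together with its $X^5$ coefficient from Proposition~\ref{prop:singB}, really does lift to a $5/2$ singular expansion of $C(x,u)$ whose coefficients are analytic and non-vanishing throughout a full complex disc around $u=1$. Granting this (which is a direct adaptation of the computation leading to Proposition~\ref{proposition:C's}(2), now with $u$ as a parameter), everything else reduces to routine implicit differentiation and book-keeping along the lines of~\cite{gn,SP}.
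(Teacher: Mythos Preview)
Your proposal is correct and follows essentially the same approach as the paper: introduce the bivariate equation $xC'(x,u)=x\exp(uB'(xC'(x,u)))$, identify the dominant singularity via the inverse $\psi(t)=t\exp(-uB'(t))$ in each of the two cases, and apply the Quasi-Powers Theorem, computing $\rho'(1)$ and $\rho''(1)$ by routine implicit differentiation. The paper's own proof is a terse sketch that defers most computations to \cite{gn} and~\cite{SP}; you have simply filled in those details (including the explicit cancellation via $\tau B''(\tau)=1$ and the formula for $\tau'(1)$), so there is no substantive difference.
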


\begin{proof} The proof for case (2) is as in \cite{gn}, and is based in an application of the Quasi-Powers Theorem. If $C(x,u)$ is the
generating function of connected graphs where now $u$ marks blocks,
then we have
\begin{equation}\label{eq:blo}
xC'(x,u) = x \exp\left(u B'(xC'(x,u))   \right),
\end{equation}
where derivatives are as usual with respect to $x$. For fixed $u$,
$\psi(t) = t\exp(-uB'(t))$ is the functional inverse of $xC'(x,u)$.
We know that for $u=1$, $\psi'(t)$ does not vanish, and the same is
true for $u$ close to 1 by continuity. The dominant singularity of
$C(x,u)$ is at $\sigma(u) = \psi(R) = R \exp(-uB'(R))$, and it is
easy to compute the derivatives $\sigma'(1)$ and $\sigma''(1)$ (see
\cite{gn} for details).

In case (1), Equation (\ref{eq:blo}) holds as well, but now the
dominant singularity is at $\psi(\tau)$. A routine (but longer)
computation gives the constants as claimed.
\end{proof}

\begin{theorem}\label{th:cut}
The number of cut vertices  in a random connected graph from
$\mathcal{G}$ with $n$ vertices is asymptotically normal, and the
mean $\mu_n$ and variance $\sigma_n^2$ are linear in $n$. In case
(1) we have
\begin{equation*}
\mu_n \sim \left(1- {\rho \over \tau}\right) n, \qquad \sigma_n^2
\sim
\left(\frac{(\tau-\rho)(\tau-2\tau\rho^2-\tau\rho-\rho+2\rho^3)}{\tau^2
\rho^2(1+\tau^2B'''(\tau))}-\left(\frac{\rho}{\tau}\right)^2
\right)n.
\end{equation*}
In case (2) we have
\begin{equation*}
\mu_n \sim \left(1 - {\rho \over R} \right) n, \qquad \sigma_n^2
\sim {\rho \over R} \left(1 - {\rho \over R} \right) n.
\end{equation*}
The same is true, with the same constants, for arbitrary random
graphs.
\end{theorem}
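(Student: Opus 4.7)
The plan is to encode cut vertices by a bivariate refinement of the rooted-connected-graph decomposition, and then perturb the singularity analysis carried out in Section~\ref{subsec:singC}. The combinatorial idea is that when a rooted connected graph $H$ is substituted at a non-root vertex $v$ of a block $B$, the vertex $v$ becomes a cut vertex of the enclosing graph precisely when $H$ has at least one block. Let $\hat F(x,w)$ denote the EGF of vertex-rooted connected graphs where each non-root vertex carries a mark $w$ whenever it is a cut vertex, and the root itself carries a mark $w$ whenever at least one block is attached to it (so that upon substitution into a larger block it automatically acts as a cut vertex). Decomposing at the root gives
$$\hat F(x,w) = x\bigl[1 + w\bigl(e^{B'(\hat F(x,w))}-1\bigr)\bigr],$$
which reduces to $\hat F(x,1) = x\exp(B'(\hat F(x,1)))$ at $w=1$. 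The auxiliary function $F(x,w) = xC'(x,w)$, where $w$ marks cut vertices of the whole graph including the root, is then obtained explicitly as $F(x,w) = x[1 + p + w(e^p-1-p)]$ with $p=B'(\hat F(x,w))$, and $C(x,w)$ follows by integrating $F(x,w)/x$.

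Since $F$ is analytic in $\hat F$, the singular behaviour of $C(x,w)$ is governed by that of $\hat F(x,w)$, and splits into the same two regimes as in Proposition~\ref{proposition:C's}. Let $\psi(u,w) = u/[1+w(e^{B'(u)}-1)]$ be the functional inverse of $\hat F(\cdot,w)$. In case (2) the singularity is of inverse-function type and $\rho(w) = \psi(R,w) = R/[1+w(e^{B'(R)}-1)]$ for $w$ in a complex neighbourhood of $1$. In case (1), $\rho(w) = \psi(\tau(w),w)$ where $\tau(w)$ is the branch point determined by $\psi_u(\tau(w),w)=0$, equivalently $uwB''(u)e^{B'(u)} = 1+w(e^{B'(u)}-1)$. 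In both regimes the perturbed singular expansion of $C(x,w)$ fits the hypotheses of the Quasi-Powers Theorem (Theorem~IX.11 in \cite{FlajoletSedgewig:analytic-combinatorics}), so the number of cut vertices is asymptotically normal with
$$\mu_n \sim -\frac{\rho'(1)}{\rho(1)}\,n, \qquad \sigma_n^2 \sim \left(-\frac{\rho''(1)}{\rho(1)} - \frac{\rho'(1)}{\rho(1)} + \Bigl(\frac{\rho'(1)}{\rho(1)}\Bigr)^2\right)n.$$

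It remains to identify the constants. In case (2) the formula for $\rho(w)$ is fully explicit, and a direct differentiation gives $-\rho'(1)/\rho(1) = 1-\rho/R$ and, after cancellation, $\sigma_n^2 \sim (\rho/R)(1-\rho/R)\,n$, as claimed. In case (1), since $\psi_u(\tau(w),w)=0$ identically, one has $\rho'(w) = \psi_w(\tau(w),w)$ and $\rho''(1) = \psi_{wu}\tau'(1)+\psi_{ww}$, with $\tau'(1) = -\psi_{uw}/\psi_{uu}$ by implicit differentiation of the branch-point condition. The factor $1+\tau^2 B'''(\tau)$ in the denominator of the stated variance arises from
$$\psi_{uu}(\tau,1) = -\tau\bigl(B'''(\tau)+B''(\tau)^2\bigr)e^{-B'(\tau)} = -\frac{1+\tau^2 B'''(\tau)}{\tau}\,e^{-B'(\tau)},$$
where we used $\tau B''(\tau)=1$.

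The extension from connected to arbitrary random graphs uses $G(x,w) = \exp(C(x,w))$: this function has the same dominant singularity $\rho(w)$ as $C(x,w)$ and a singular expansion of the same type up to an analytic multiplicative factor, so the linear mean and variance coefficients are unchanged, exactly as in the proofs of Theorems~\ref{th:edges} and~\ref{th:blocs}. I expect the main technical obstacle to be the algebra in case (1): one must carry through the implicit differentiation of $\tau(w)$ and combine several rational expressions in $\tau$, $\rho$, $B'(\tau)$, $B''(\tau)$ and $B'''(\tau)$ (using $\tau B''(\tau)=1$ to eliminate $B''$) to reach the precise closed form of $\sigma_n^2$ stated in the theorem. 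The computation is routine but bulky, and is the only place where $B'''(\tau)$ enters the final answer.
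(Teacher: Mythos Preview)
Your approach is correct and essentially identical to the paper's: both hinge on the same functional inverse $\psi(t)=t/[1+w(e^{B'(t)}-1)]$, split into the same two regimes (branch point at $\tau(w)$ versus inherited singularity at $R$), and defer the case-(1) variance to a routine symbolic computation. Your distinction between $\hat F$ and the true $F=xC'$ is more careful than the paper's presentation---the paper writes the $\hat F$ equation and simply calls it $xC'(x,u)$---but since $F$ is an analytic function of $\hat F$ and $x$, both share the singularity $\rho(w)$, so the Quasi-Powers constants are unaffected either way.
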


\begin{proof} If $u$ marks cut vertices in $C(x,u)$, then we have
$$xC'(x,u)=xu(\exp\left(B'(xC'(x,u))\right)-1)+x.$$
It follows that, for given $u$,
$$\psi(t) = \frac{t}{u(\exp{(B'(t))}-1)+1}$$
 is the inverse function of $xC'(x,u)$. In case (2) the dominant singularity $\sigma(u)$ is  at $\psi(R)$.
Taking into account that $\rho = R\exp(B'(R))$, the derivatives of
$\sigma$ are easily computed. In case (1) the singularity is at
$\psi(\tau(u))$, where $\tau(u)$ is given by  $\psi'(\tau(u))=0$. In
order to compute derivatives,
 we differentiate $\psi(\tau(u))=0$ with respect to $u$ and solve for
 $\tau'(u)$, and once more in order to get $\tau''(u)$. After several computations and simplifications using \texttt{Maple}, we get the values as claimed.
\end{proof}

\subsection{Number of copies of a subgraph}
Let $H$ be a fixed rooted graph from the class $\mathcal{G}$, with
vertex set $\{1,\ldots,h\}$ and root $r$. Following \cite{MSW}, we
say that $H$ \emph{appears}  in $G$ at $W \subset V(G)$ if (a) there
is an increasing bijection from $\{1,\ldots,h\}$ to $W$ giving an
isomorphism between $H$ and the induced subgraph $G[W]$ of $G$; and
(b) there is exactly one edge in $G$ between $W$ and the rest of
$G$, and this edge is incident with the root $r$.

Thus an appearance of $H$ gives a copy of $H$ in $G$ of a very
particular type, since the copy is joined to the rest of the graph
through a unique pendant edge. We do not know how to count the
number of subgraphs isomorphic to $H$ in a random graph, but we can
count very precisely the number of appearances.

\begin{theorem}\label{th:appear}
Let $H$ be a fixed rooted connected graph in $\mathcal{G}$ with $h$
vertices. Let $\nX_n$ denote the number of appearances of $H$ in a
random rooted connected graph from $\mathcal{G}$ with $n$ vertices.
Then $\nX_n$ is asymptotically normal and the mean $\mu_n$ and
variance
    $\sigma_n^2$ satisfy
\begin{equation*}
\mu_n \sim {\rho^h \over h!}\,n, \qquad \sigma_n^2 \sim \rho\, n,
\end{equation*}
\end{theorem}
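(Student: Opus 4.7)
The plan is to apply the Quasi-Powers framework, exactly as in the proofs of Theorems~\ref{th:blocs} and~\ref{th:cut}. I would introduce the bivariate EGF $F(x, u)$ enumerating rooted connected graphs in $\mathcal{G}$, where $x$ marks vertices and $u$ marks appearances of $H$. The combinatorial starting point is the observation that an appearance at $W \subseteq V(G)$ is joined to its complement by a single pendant edge incident with the root of $H$; in the block decomposition at the other endpoint of that bridge, this translates into a single-edge block whose non-root end carries a rooted connected subgraph that, under the increasing bijection $\{1, \dots, h\} \to W$, matches $H$ exactly. Since there is precisely one such matching subgraph on any $h$-subset of labels, the species EGF of ``$H$-appearance'' structures equals $x^h/h!$. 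Marking each such instance in $B'(F(x, u))$ by $u$ instead of $1$ yields
\begin{equation*}
F(x, u) \;=\; x \exp\!\left(B'(F(x, u)) + (u - 1)\tfrac{x^h}{h!}\right),
\end{equation*}
which reduces to $F(x, 1) = x C'(x)$ at $u = 1$.

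Next I would extract the perturbed dominant singularity $\rho(u)$. Setting $\psi(t) = t \exp(-B'(t))$ as in Section~\ref{subsec:singC}, the equation rewrites as $\psi(F(x, u)) = x \exp((u - 1) x^h / h!)$. Because $\psi$ itself is independent of $u$, its critical value---equal to $\psi(\tau)$ in Case~(1) and to $\psi(R)$ in Case~(2) of Proposition~\ref{proposition:C's}---does not move with $u$, so $\rho(u)$ is determined by the implicit equation $\rho(u) \exp((u - 1)\rho(u)^h / h!) = \rho$, where $\rho = \rho(1)$. A single implicit differentiation at $u = 1$ gives $\rho'(1) = -\rho^{h+1}/h!$, from which $\mu_n \sim -\rho'(1) n / \rho = (\rho^h / h!)\, n$. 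A second implicit differentiation yields $\rho''(1)$, and substitution into the standard formula $\lambda = -\rho''(1)/\rho + (\rho'(1)/\rho)^2 - \rho'(1)/\rho$ of Theorem~\ref{th:edges} delivers the linear variance $\sigma_n^2 \sim \lambda n$.

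The main technical obstacle is to verify that the Quasi-Powers theorem of \cite{FlajoletSedgewig:analytic-combinatorics} applies uniformly for $u$ in a complex neighborhood of $1$: one needs $F(x, u)$ to admit a singular expansion of the same algebraic type as $F(x, 1)$, with coefficients depending analytically on $u$. Since the perturbation $(u - 1) x^h / h!$ is polynomial in $x$ and entire in $u$, the implicit-function analysis of Sections~\ref{subsec:singB}--\ref{subsec:singC} carries through with only analytic $u$-dependence in the coefficients, and the singular exponent of $F(x, u)$ coincides with that of $F(x, 1)$ given in Proposition~\ref{proposition:C's}. The variability condition $\rho'(1) \ne 0$ is immediate from $\rho > 0$. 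The statement for arbitrary random graphs, with the same constants, follows from the usual transfer through $C(x)$ and $G(x) = \exp(C(x))$ as in the preceding limit-law theorems.
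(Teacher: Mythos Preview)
Your proposal is correct and follows essentially the same route as the paper's proof: both invoke the Quasi-Powers Theorem applied to the bivariate functional equation
\[
f(x,u) = x\exp\!\left(B'(f(x,u)) + (u-1)\tfrac{x^h}{h!}\right),
\]
and both recover the singularity $\rho(u)$ via the change of variable $\psi(t)=t\exp(-B'(t))$ (what the paper calls ``a change of variable'' and defers to \cite{gn} for). Your version simply spells out the implicit differentiation for $\rho'(1)$ and the uniformity check that the paper leaves implicit; the final remark about arbitrary random graphs is extraneous, since the statement concerns only rooted connected graphs.
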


\begin{proof} The proof  is as in \cite{gn}, and is based on the Quasi-Powers Theorem. If $f(x,u)$ is the generating
function of rooted connected graphs and $u$ counts appearances of
$H$ then, up to a simple term that does not affect the asymptotic
estimates, we have
\begin{equation*}
 f(x,u) = x \exp\left(B'(f(x,u)) + (u-1) {x^h\over h!}    \right).
\end{equation*}
The dominant singularity is computed through a change of variable,
and the rest of the computation is standard; see the proof of
Theorem 5 in \cite{gn} for details. For this parameter is no
difference between cases (1) and (2).
\end{proof}

Now we study appearances of a fixed 2-connected subgraph $L$ from
$\mathcal{G}$ in rooted connected graphs. An appearance of $L$ in
this case corresponds to a block with a labelling  order isomorphic
to $L$. Notice in this case  that an appearance can be anywhere in
the tree of blocks, not only as a terminal block.

\begin{theorem}\label{th:appearBlock}
Let $L$ be a fixed rooted 2-connected graph in $\mathcal{G}$ with
$\ell+1$ vertices. Let $\nX_n$ denote the number of appearances of
$L$ in a random connected graph from $\mathcal{G}$ with $n$
vertices. Then $\nX_n$ is asymptotically normal and the mean $\mu_n$
and variance
    $\sigma_n^2$ satisfy
\begin{equation*}
\mu_n \sim {R^\ell \over \ell!}\,n, \qquad \sigma_n^2 \sim {R^\ell
\over \ell!} \, n,
\end{equation*}
\end{theorem}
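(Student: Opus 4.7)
The plan is to follow the template of Theorems \ref{th:appear}, \ref{th:blocs} and \ref{th:cut}: set up a bivariate EGF in which $u$ marks $L$-appearances, track the dominant singularity as $u$ varies near $1$, and invoke the Quasi-Powers Theorem from \cite{FlajoletSedgewig:analytic-combinatorics}.

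First I would encode $L$-appearances via the block decomposition. Let $f(x,u)=xC'(x,u)$ be the EGF of vertex-rooted connected graphs from $\mathcal{G}$, with $u$ marking the number of $L$-appearances. An $L$-appearance is a block of $G$ whose vertex set is in increasing bijection with $V(L)$; as the authors emphasise, such a block may sit anywhere in the block tree, so its $\ell$ non-root vertices may themselves carry further rooted connected graphs. Inside the series $B'(f)$ appearing in the standard decomposition $xC'(x)=x\exp(B'(xC'(x)))$, the $L$-shape blocks at the root contribute exactly $f^\ell/\ell!$: the increasing-bijection condition fixes the $L$-structure once the $\ell$ non-root labels are chosen, and $f$ is substituted at each non-root vertex to carry the rest of the graph. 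Re-weighting these blocks by $u$ therefore produces
\[
  f(x,u)\;=\;x\exp\!\left(B'(f(x,u))+(u-1)\,\frac{f(x,u)^{\ell}}{\ell!}\right).
\]

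Next I would perform the singularity analysis of Section~\ref{subsec:singC} with deformation parameter $u$ near $1$. The associated functional inverse is $\psi_u(t)=t\exp(-B'(t)-(u-1)t^\ell/\ell!)$; in case~(2) the dominant singularity of $f(\cdot,u)$ is $\rho(u)=\psi_u(R)$, giving
\[
  \rho(u)\;=\;\rho\,\exp\!\left(-(u-1)\,\frac{R^{\ell}}{\ell!}\right),
\]
whence $-\rho'(1)/\rho(1)=R^{\ell}/\ell!$ and $-\rho''(1)/\rho(1)+(\rho'(1)/\rho(1))^2=0$. Applying the Quasi-Powers Theorem exactly as in the proof of Theorem~5 of \cite{gn} then yields asymptotic normality of $\nX_n$ with $\mu_n\sim\kappa n$ and $\sigma_n^2\sim\lambda n$, where $\kappa=-\rho'(1)/\rho(1)=R^{\ell}/\ell!$ and $\lambda=-\rho''(1)/\rho(1)-\rho'(1)/\rho(1)+(\rho'(1)/\rho(1))^2=R^{\ell}/\ell!$, matching the statement of the theorem.

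The main obstacle is the first step. One must carefully justify that the increasing-bijection condition in the definition of $L$-appearance exactly absorbs the $\ell!$ that would otherwise arise from labelling the non-root vertices freely, so that the clean factor $f^\ell/\ell!$ is correct, and check that the resulting EGF convolution assigns weight $u^{k}$ to a graph with $k$ $L$-appearances (rather than some binomial combination). Once this combinatorial identity is validated, the remaining arguments are routine analytic deformations parallel to the proofs of Theorems \ref{th:appear}, \ref{th:blocs} and \ref{th:cut}; case~(1) is handled analogously by placing the singularity at the branch point $\psi_u'(\tau(u))=0$ instead of at $\psi_u(R)$.
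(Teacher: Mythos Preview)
Your proposal is correct and follows essentially the same approach as the paper's own proof: the same bivariate functional equation $f(x,u)=x\exp\bigl(B'(f(x,u))+(u-1)f(x,u)^\ell/\ell!\bigr)$, the same functional inverse $\phi(t)=t\exp(-B'(t)-(u-1)t^\ell/\ell!)$, and the same computation $\sigma(u)=\phi(R)=\rho\exp(-(u-1)R^\ell/\ell!)$ leading to the claimed moments via Quasi-Powers. The only minor difference is that the paper asserts flatly that ``for this parameter there is no difference between cases (1) and (2)'', while you (more cautiously) flag case~(1) as requiring the branch-point analysis; the paper's explicit derivation, like yours, is written out only for case~(2).
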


\begin{proof} If $f(x,u)$ is the generating function of rooted connected graphs
and $u$ counts appearances of $L$, then we have
\begin{equation*}
 f(x,u) = x \exp\left(B'(f(x,u)) + (u-1) {f(x,u)^\ell\over \ell!}    \right).
\end{equation*}
The reason as that each occurrence of $L$ is single out by
multiplying by $u$. Notice that $L$ has $\ell+1$ vertices by the
root bears no label. It follows that the inverse of $f(x,u)$ is
given by (for a given value of $u$) is
$$
   \phi(t)  = t \exp\left( -B'(t) - (u-1)t^\ell/\ell!\right).
$$
The singularity of $\phi(t)$ is equal to $R$, independently of $t$.
Since for $u=1$ we know that $\phi'(t)$ does not vanish, the same is
true for $u$ close to~1. Then the dominant singularity of $f(x,u)$
is given by
$$
\sigma(u) = \phi(R)= \rho\cdot \exp(-(u-1)R^\ell/\ell!),
$$
since $\rho = R \exp(-B'(R))$. A simple calculation gives
$$\sigma'(1) = -\rho {R^\ell \over \ell!}, \qquad \sigma''(1) = \rho
{R^{2\ell}\over \ell!^2}
$$
and the results follows easily as in the proof of
Theorem~\ref{th:edges}. Again, for this parameter  is no difference
between cases (1) and (2).
\end{proof}

\subsection{Number of connected components}
Our next parameter, as opposed to the previous one,  follows a
discrete limit law.
\begin{theorem}\label{th:components}
Let $\nX_n$ denote the number of connected components in a random
 graph $\mathcal{G}$ with $n$ vertices. Then  $\nX_n-1$ is distributed
asymptotically as a Poisson law of parameter $\nu$, where $\nu =
C(\rho)$.

As a consequence,  the probability that a random graph $\mathcal{G}$
is connected is asymptotically equal to $e^{-\nu}$.
\end{theorem}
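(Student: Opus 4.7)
The plan is to use the standard bivariate generating function approach with singularity analysis, treating $u$ as a parameter marking connected components.

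First I would introduce the bivariate GF $G(x,u)=\exp(uC(x))$, where $u$ marks the number of connected components. By the set construction, the probability generating function of $X_n$ is
$$\mathbb{E}[u^{X_n}]=\frac{[x^n]\exp(uC(x))}{[x^n]\exp(C(x))}.$$
Since I want to show convergence of $X_n-1$ to a Poisson law with parameter $\nu=C(\rho)$, by L\'evy's continuity theorem for probability generating functions on $\mathbb{Z}_{\geq 0}$ it suffices to show that, for $u$ in a real neighborhood of $1$,
$$\mathbb{E}[u^{X_n}] \longrightarrow u\,e^{(u-1)\nu},$$
which is the PGF of $1+\operatorname{Poisson}(\nu)$.

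Next I would apply singularity analysis. From the results of Section~\ref{subsec:singC}, $C(x)$ has a singular expansion at $\rho$ of the form
$$C(x)=C_0+C_2 X^2+\cdots+C_\alpha X^{\alpha}+\cdots,$$
with $X=\sqrt{1-x/\rho}$ and with singular exponent $\alpha\in\{3/2,5/2,5/3\}$, depending on the case; here $C_0=C(\rho)=\nu$ is finite because $C(x)$ is analytic at $\rho$ except for the singular contribution. Now write
$$\exp(uC(x))=e^{u C_0}\exp\bigl(u(C(x)-C_0)\bigr)=e^{uC_0}\Bigl(1+u(C(x)-C_0)+\tfrac12 u^2(C(x)-C_0)^2+\cdots\Bigr).$$
Because $C(x)-C_0$ vanishes at $\rho$ and its dominant singular term is $C_\alpha X^\alpha$, the dominant singular term of $\exp(uC(x))$ is precisely $u\,e^{uC_0}\,C_\alpha X^\alpha$; higher powers of $C(x)-C_0$ only contribute to singular exponents strictly larger than $\alpha$ and therefore to lower-order terms in the $n$-th coefficient. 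The location of the dominant singularity does not move with $u$ (the singularity of $C(x)$ is intrinsic to $C$, not to $u$), so for $u$ in a fixed small real neighborhood of $1$ the transfer theorem of \cite{FlajoletSedgewig:analytic-combinatorics} applies uniformly and yields
$$[x^n]\exp(uC(x))\sim \frac{u\,e^{uC_0}\,C_\alpha}{\Gamma(-\alpha)}\,n^{-\alpha-1}\rho^{-n}.$$

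Finally, dividing the asymptotic estimates for numerator and denominator, the common factor $C_\alpha\Gamma(-\alpha)^{-1}n^{-\alpha-1}\rho^{-n}$ cancels and I obtain
$$\mathbb{E}[u^{X_n}]\longrightarrow \frac{u\,e^{uC_0}}{e^{C_0}}=u\,e^{(u-1)\nu},$$
which is exactly the PGF of $1+\operatorname{Poisson}(\nu)$. Setting $u=0$, or evaluating $\mathbb{P}(X_n=1)=\mathbb{P}(X_n-1=0)$ in the limit, gives the corollary that the asymptotic connectedness probability is $e^{-\nu}$. The main delicate point is the uniformity in $u$ of the singularity analysis; this is standard here because $\rho$ is independent of $u$ and the coefficients $C_i$ do not depend on $u$ at all, so the singular expansion of $\exp(uC(x))$ is analytic in $u$ near $u=1$ with no confluence of singularities.
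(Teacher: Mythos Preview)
Your proof is correct and uses essentially the same approach as the paper: both extract the dominant singular term of the relevant power/exponential of $C(x)$ via the singular expansion at $\rho$ and apply the transfer theorem. The only cosmetic difference is that the paper computes $\PP(X_n=k)$ directly for each fixed $k$ via $[x^n]C(x)^k/k!\sim k\nu^{k-1}[x^n]C(x)$, whereas you pass through the PGF $\mathbb{E}[u^{X_n}]$ and a continuity argument; the underlying singularity analysis is identical.
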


\begin{proof} The proof is as in \cite{gn}. The generating function of graphs
with exactly $k$ connected components is $C(x)^k/k!$. Taking the
$k$-th power of the singular expansion of $C(x)$, we have $ [x^n]
C(x)^k  \sim k \nu^{k-1} [x^n] C(x)$.  Hence the probability that a
random  graphs has exactly $k$ components is asymptotically
$$
{ [x^n] C(x)^k / k! \over [x^n]G(x)} \sim {k \nu^{k-1} \over k!} \,
e^{-\nu} = {\nu^{k-1} \over (k-1)!} \, e^{-\nu}
$$
as was to be proved.
\end{proof}
\subsection{Size of the largest connected component}
Extremal parameters are treated in the next two sections. However,
the size of the largest component is easy to analyze and we include
it here. The notation ${M}_n$ in the next statement,
suggesting vertices \emph{missed} by the largest component,  is
borrowed from \cite{surfaces}. Recall that $g_n,c_n$ are the numbers
of graphs and connected graphs, respectively,  $R$ is the radius of
convergence of $B(x)$, and $C_i$ are the singular coefficients of
$C(x)$.

\begin{theorem}\label{th:largest-component}
Let ${L}_n$ denote the size of the largest connected
component in a random  graph $\mathcal{G}$ with $n$ vertices, and
let ${M}_n = {L}_n-n$. Then
$$
  \PP\left({M}_n = k\right) \sim  p_k = p\cdot  g_k {\rho^k \over k!} ,
$$
where $p$ is the probability of a random graph being connected.
Asymptotically, either $p_k \sim c\, k^{-5/2}$ or $p_k \sim c\,
k^{-7/2}$  as $k \to \infty$, depending on the subexponential term
in the estimate of $g_k$.

In addition, we have $\sum p_k = 1$ and
$\EE\left[{M}_n\right] \sim \tau$ in case $(1)$ and
$\EE\left[{M}_n\right] \sim R$  in case $(2)$.  In case (1)
the variance $\sigma^2({M}_n)$ does not exist and in case (2)
we have $\sigma^2({M}_n) \sim R + 2C_4 $.
\end{theorem}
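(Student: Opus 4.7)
The plan is to express $\PP(M_n = k)$ by a clean combinatorial identity for small $k$, extract the pointwise limit from the enumeration asymptotics of Theorem \ref{th:mainresult}, and compute the moments of the limit using $G(x) = \exp C(x)$, $F(x) = xC'(x)$ together with the singular expansion of $C(x)$ supplied by Proposition \ref{proposition:C's}.

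For the pointwise limit, fix $k$ and take $n > 2k$. Any graph on $[n]$ with $M_n = k$ has a \emph{unique} largest component (its $n-k > k$ vertices outnumber everything else combined), and so decomposes uniquely as a connected graph on a chosen $(n-k)$-subset plus an arbitrary graph on the remaining $k$ vertices. Hence
$$
\PP(M_n = k) = \binom{n}{k}\,\frac{c_{n-k}\,g_k}{g_n}.
$$
Since $C(x)$ and $G(x)$ share the dominant singularity $\rho$ and the same singular exponent, Theorem \ref{th:mainresult} supplies $c_m \sim c\,m^{-\beta}\rho^{-m}\,m!$ and $g_n \sim g\,n^{-\beta}\rho^{-n}\,n!$ with a common $\beta \in \{5/2,7/2\}$. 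The factorials telescope:
$$
\binom{n}{k}\,\frac{c_{n-k}}{g_n} \;\sim\; \frac{c}{g\,k!}\,\Bigl(\frac{n}{n-k}\Bigr)^{\!\beta}\rho^k \;\longrightarrow\; \frac{p\,\rho^k}{k!},
$$
where $p = c/g = \lim c_n/g_n$ is the limiting connectedness probability of Theorem \ref{th:components}. Multiplying by $g_k$ yields $\PP(M_n = k) \to p_k = p\,g_k\,\rho^k/k!$, and the assumed form of $g_k$ gives the tail $p_k \sim p\,g\,k^{-\beta}$, i.e.\ the two regimes $k^{-5/2}$ and $k^{-7/2}$.

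For the normalization, $\sum_k p_k = p\,G(\rho) = 1$, because $p = e^{-\nu}$ with $\nu = C(\rho)$ from Theorem \ref{th:components}, whence $G(\rho) = \exp\nu = 1/p$. Differentiating the generating identity at $\rho$,
\begin{align*}
\sum_k k\,p_k &= p\,\rho\,G'(\rho) = \rho\,C'(\rho) = F(\rho), \\
\sum_k k(k-1)\,p_k &= p\,\rho^2\,G''(\rho) = \rho^2\bigl(C'(\rho)^2 + C''(\rho)\bigr),
\end{align*}
so the candidate mean is $F(\rho)$ and the candidate variance is $F(\rho) + \rho^2 C''(\rho)$. By the definitions of $\rho$ and $\tau$ in Section \ref{subsec:singC}, $F(\rho) = \tau$ in case (1) and $F(\rho) = R$ in case (2). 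For the variance in case (2), the expansion $C(x) = C_0 + C_2 X^2 + C_4 X^4 + C_5 X^5 + O(X^6)$ with $X = \sqrt{1-x/\rho}$ gives $\rho^2 C''(\rho) = 2C_4$: two derivatives kill the analytic $C_0$ and $C_2 X^2$ contributions, leave $2C_4/\rho^2$ from the analytic $C_4 X^4$ term, and evaluate the surviving singular part $C_5 X^5$ (now proportional to $X$) to zero at $X=0$. Hence the variance equals $R + 2C_4$. In case (1), the singular $C_3 X^3$ term of Proposition \ref{proposition:C's} forces $C''(\rho) = +\infty$, consistent with the divergence of $\sum k^2 p_k$ caused by the heavier tail $p_k \sim k^{-5/2}$, and explaining why $\sigma^2(M_n)$ has no finite limit.

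The main technical step is upgrading the pointwise convergence $\PP(M_n = k) \to p_k$ to convergence of the first moment (and, in case (2), the second moment). For $k \le n/2$ the exact formula $\binom{n}{k} c_{n-k} g_k/g_n$ is bounded uniformly in $n$ by a constant multiple of $p_k$, since $(n/(n-k))^{\beta} \le 2^{\beta}$ and the enumeration estimates hold uniformly on that range. For $k > n/2$ the same formula is still an upper bound on $\PP(M_n = k)$ (obtained by counting graphs with a \emph{distinguished} component of size $n-k$, which over-counts by the multiplicity of the maximum size), and this upper bound can be summed against $k$ and $k^2$ using the EGF identity $xG'(x) = F(x)G(x)$, which yields $\sum_k \binom{n}{k}(n-k)c_{n-k}g_k = n\,g_n$ and an analogue for the second factorial moment. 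Bounded convergence then delivers the stated limiting mean and variance.
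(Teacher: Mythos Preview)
Your proof is correct and follows essentially the same route as the paper: the same combinatorial identity $\binom{n}{k}c_{n-k}g_k/g_n$, the same use of the asymptotics from Theorem~\ref{th:mainresult}, and the same computation of the moments of the limit law via $f(u)=pG(\rho u)$. Your final paragraph on upgrading pointwise convergence to convergence of moments actually goes beyond what the paper's proof provides---the paper simply computes the moments of the limiting distribution $(p_k)$ and tacitly identifies them with the limits of $\EE[M_n]$ and $\sigma^2(M_n)$, without justifying the interchange.
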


\begin{proof} The proof is essentially the same as in \cite{surfaces}. For fixed
$k$, the probability that ${M}_n = k$ is equal to
$$
\binom{n}{k}\frac{c_{n-k} g_k}{g_n},
$$
since there are ${n \choose k}$ ways of choosing the labels of the
vertices not in the largest component, $c_{n-k}$ ways of choosing
the largest component, and $g_k$ ways of choosing the complement. In
case $(1)$, given the estimates
$$
    g_n \sim g \cdot n^{-5/2} \rho^{-n} n!, \qquad
    c_n \sim c \cdot n^{-5/2} \rho^{-n} n!,
$$
the estimate for $p_k$ follows at once (we argue similarly in each
subcase of $(2)$). Observe that $p = \lim c_n/g_n = c/g$.

For the second part of the statement notice that,
$$
\sum p_k  = p \sum g_k {\rho^k \over k!} = p\, G(\rho) = 1,
$$
since from Theorem~\ref{th:components} it follows that $p =
e^{-C(\rho)} = 1/G(\rho)$. To compute the moments notice that the
probability GF is  $f(u) = \sum p_k u^k = p G(\rho u)$. Then the
expectation is estimated as
$$
f'(1) = p\, \rho G'(\rho) = p\,  G(\rho) \rho C'(\rho) ,
$$
which correspond with $\tau$ in case $(1)$ and $R$ in case $(2)$,
since $G(x) = \exp C(x)$. For the variance we compute
$$
f''(1) + f'(1) - f'(1)^2 = \rho C'(\rho) + \rho^2 C''(\rho).
$$
In case (1) $\lim _{x\to \rho} C''(x) = \infty$, so that the
variance does not exist. In case (2) we have $\rho C'(\rho) = R$ and
$\rho^2 C''(\rho) = 2C_4$.
\end{proof}

\section{Largest block and 2-connected core}\label{se:bloc}

The problem of estimating the largest block in random maps has been
well studied. We recall that a map is a connected planar graph
together with a specific embedding in the plane. Moreover, an edge
has been oriented and marked as the root edge. Gao and
Wormald~\cite{GW99} proved that the largest block in a random map
with $n$ edges has almost surely $n/3$ edges, with deviations of
order~$n^{2/3}$. More precisely, if $\nX_n$ is the size of the
largest block, then
 $$
    \PP\left(|\nX_n - n/3| < \lambda(n) n^{2/3}\right) \to 1, \qquad \hbox{as $n
    \to \infty$},
 $$
where $\lambda(n)$ is any function going to infinity with $n$. The
picture was further clarified by Banderier et al.~\cite{airy}. They
found  that the largest block in random maps obeys a continuous
limit law, which is called by the authors the `Airy distribution of
the map type', and is closely related to a stable law of index
$3/2$. As we will see shortly, the Airy distribution also appears in
random planar graphs.

A useful technical device is to work with the 2-connected core,
which in the case of maps is the (unique) block containing the root
edge. For graphs it is a bit more delicate. Consider a connected
graph $R$ rooted at a vertex $v$. We would like to say that the core
of $R$ is the block containing the root, but if $v$ is a cut vertex
then there are several blocks containing $v$ and there is no clear
way to single out one of them. Another possibility is to say that
the $2$-connected core is the union of $2$-connected components the
blocks containing the root, but then the core is not in general a
$2$-connected graph.

The definition we adopt is the following. If the root is not a cut
vertex, then the \emph{core} is the unique block containing the
root. Otherwise, we say that the rooted graph is \emph{coreless}.
Let $C^{\bullet}(x,u)$ be the generating function of rooted
connected graphs, where the root bears no label, and $u$ marks the
size of the $2$-connected core. Then we have

\begin{equation*}
C^{\bullet}(x,u)= B'(uxC'(x))+ \exp(B'(xC'(x))-B'(xC'(x)),
\end{equation*}
where $C(x)$ and $B(x)$ are the GFs for connected and 2-connected
graphs, respectively. The first summand corresponds to graphs which
have a core, whose size is recorded through  variable $u$, and the
second one to coreless graphs. We rewrite the former equation as
$$
C^{\bullet}(x,u)= Q(uH(x)) + Q_L(x),
$$
where
 $$
 H(x)=xC'(x), \quad Q(x)=B'(x), \quad Q_L(x) =
 \exp(B'(xC'(x))-B'(xC'(x)).
 $$
With this notation, $Q_L(x)$ enumerates coreless graphs, and
$Q(uH(x))$ enumerates graphs with core. The asymptotic probability
that a graph is coreless is
$$
p_L= \lim_{n\rightarrow \infty} \frac{[x^{n}]Q_{L}(x)}
{[x^{n}]C'(x)} = 1-\lim_{n\rightarrow \infty} \frac{[x^{n}]Q(H(x))}
{[x^{n}]C'(x)}.
$$
The key point is that graphs with core fit into a composition scheme
$$
Q(uH(x)).
$$
This has to be understood as follows. A rooted connected graph whose
root is not a cut vertex is obtained from a 2-connected graph (the
core), replacing each vertex of the core by a rooted connected
graph. It is shown in \cite{airy} that such a composition scheme
leads either to a discrete law or to a continuous law, depending on
the nature of the singularities of $Q(x)$ and $H(x)$.

Our analysis for a closed class $\mathcal{G}$ is divided into two
cases. If we are in case (1) of Proposition~\ref{proposition:C's},
we say that the class $\mathcal{G}$ is \emph{series-parallel-like};
in this situation the size of the core follows invariably a discrete
law which can be determined precisely in terms of $Q(x)$ and $H(x)$.
If we are in case (2) we say that the class $\mathcal{G}$ is
\emph{planar-like}.  In this situation the size of the core has two
modes, a discrete law when the core is small, and a continuous Airy
distribution when the core has linear size. Moreover, for
planar-like classes, the size of the largest block follows the same
Airy distribution and  is concentrated around $\alpha n$ for a
computable constant~$\alpha$. The critical case, discussed at the
end of Section~\ref{se:asympt}, is not treated here.

\subsection{Core of series-parallel-like classes}
Recall that in case (1) of Proposition~\ref{proposition:C's} we have
$H(\rho)=\rho C'(\rho)=\tau $, where $\tau$ is the solution to the
equation $\tau B''(\tau)=1$. Since $RB''(R)>1$ and $uB''(u)$ is an
increasing function, we conclude that $H(\rho)<R$. This gives rise
to the so called \emph{subcritical} composition scheme. We refer to
the exposition in section IX.3 of
\cite{FlajoletSedgewig:analytic-combinatorics}.
The main result we use is Proposition IX.1 from
\cite{FlajoletSedgewig:analytic-combinatorics}, which is the
following.

\begin{prop}\label{prop:subcritical-case}
Consider the composition scheme $Q(uH(x))$. Let $R,\rho$ be the
radius of convergence of $Q$ and $H$, respectively. Assume that $Q$
and $H$ satisfy the subcritical condition $\tau = H(\rho)\leq R$,
and that $H(x)$ has a unique singularity at $\rho$ on its disk of
convergence with a singular expansion
$$H(x)=\tau -c_{\lambda}(1-z/\rho)^{\lambda}+o((1-z/\rho)^{\lambda}),$$
where $\tau, c_{\lambda}>0$ and $0<\lambda<1$.
Then the size of the $Q$-core follows a discrete limit law,
$$\lim_{n\rightarrow \infty}\frac{[x^nu^k]Q(uH(x))}{[x^n]Q(H(x))}=q_{k}.$$
The probability generating function $q(u) = \sum q_ku^k$ of the
limit distribution is
$$
q(u)={uQ'(\tau u) \over Q'(\tau)}.
$$
\end{prop}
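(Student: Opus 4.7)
The plan is to derive $q_k$ by singularity analysis of the bivariate GF $Q(uH(x))$. The algebraic starting point is the identity
\[
[u^k]Q(uH(x)) = Q_k\,H(x)^k,\qquad Q_k := [t^k]Q(t),
\]
so the ratio under study equals
\[
\frac{[x^nu^k]Q(uH(x))}{[x^n]Q(H(x))} = \frac{Q_k\,[x^n]H(x)^k}{[x^n]Q(H(x))}.
\]
For each fixed $k$, both $H(x)^k$ and $Q(H(x))$ inherit their dominant singularity from $\rho$, the unique singularity of $H$ on its disk of convergence; the task reduces to computing singular expansions at $\rho$ and applying the transfer theorem.

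The singular expansion of $H$ propagates cleanly to powers and compositions. Raising the hypothesis to the $k$-th power gives
\[
H(x)^k = \tau^k - k\tau^{k-1}c_\lambda(1-x/\rho)^\lambda + o\bigl((1-x/\rho)^\lambda\bigr).
\]
For the composition, the subcritical condition $\tau<R$ (the genuinely subcritical case) guarantees that $Q$ is analytic at $\tau$; substituting the expansion of $H$ into the Taylor series $Q(\tau+w)=Q(\tau)+Q'(\tau)w+O(w^2)$ and noting that the quadratic term in $w$ contributes $O((1-x/\rho)^{2\lambda})$ (which is subdominant since $\lambda<1$ implies $2\lambda>\lambda$ only when we expand... actually $O((1-x/\rho)^{2\lambda})=o((1-x/\rho)^\lambda)$ since $2\lambda>\lambda$, hence negligible as a singular contribution together with the analytic pieces) yields
\[
Q(H(x)) = Q(\tau) - Q'(\tau)c_\lambda(1-x/\rho)^\lambda + o\bigl((1-x/\rho)^\lambda\bigr).
\]
Applying the transfer theorem to each expansion produces
\[
[x^n]H(x)^k \sim \frac{k\tau^{k-1}c_\lambda}{\Gamma(-\lambda)}\,\rho^{-n}n^{-\lambda-1},\qquad
[x^n]Q(H(x)) \sim \frac{Q'(\tau)c_\lambda}{\Gamma(-\lambda)}\,\rho^{-n}n^{-\lambda-1}.
\]
The factors $c_\lambda/\Gamma(-\lambda)$, $\rho^{-n}$ and $n^{-\lambda-1}$ cancel in the ratio, and I obtain $q_k = Q_k\,k\tau^{k-1}/Q'(\tau)$. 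Summing,
\[
q(u) = \sum_{k\geq 1} q_ku^k = \frac{1}{Q'(\tau)}\sum_{k\geq 1} kQ_k\tau^{k-1}u^k = \frac{uQ'(\tau u)}{Q'(\tau)},
\]
and evaluating at $u=1$ gives $\sum q_k = 1$, confirming $(q_k)$ is a probability distribution.

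The main technical obstacle is verifying the analytic prerequisites of the transfer theorem, namely that $H$ admits analytic continuation to a $\Delta$-domain at $\rho$ with sole singularity at $\rho$ on its circle of convergence; in the applications of this chapter, where $H=xC'(x)$ and $C$ arises from the block-decomposition equations of Section~\ref{se:asympt}, this follows from the general singularity framework established there. A subtler point is the boundary case $\tau=R$ permitted in the statement: if $Q$ is itself singular at $\tau$, the Taylor substitution for $Q(H(x))$ no longer applies and a separate composed-expansion analysis is required (this is precisely the critical composition scheme, of Airy type, addressed in the next subsection for planar-like classes). In the strictly subcritical regime relevant here, however, $\tau<R$ and the argument above goes through without modification.
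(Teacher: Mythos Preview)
The paper does not supply its own proof of this proposition: it is quoted verbatim as Proposition~IX.1 from Flajolet and Sedgewick's \emph{Analytic Combinatorics}, and the paper simply applies it. Your argument is essentially the standard proof found there---extract $[u^k]$ algebraically, propagate the singular expansion of $H$ to $H(x)^k$ and to $Q(H(x))$ via analyticity of $Q$ at $\tau$, then take the ratio of transferred asymptotics---so there is no methodological divergence to discuss.

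Two small remarks. First, your displayed asymptotics for $[x^n]H(x)^k$ and $[x^n]Q(H(x))$ are each missing a sign: the singular coefficient is $-k\tau^{k-1}c_\lambda$ (respectively $-Q'(\tau)c_\lambda$), so the correct estimate carries an extra minus sign, which is what makes the right-hand side positive given $\Gamma(-\lambda)<0$ for $0<\lambda<1$. This is cosmetic since the signs cancel in the ratio. Second, you are right to flag the boundary case $\tau=R$ allowed by the $\leq$ in the hypothesis: the Taylor substitution for $Q$ at $\tau$ genuinely requires $\tau<R$, and the equality case belongs to the critical scheme handled separately. In the paper's applications (Theorem~\ref{th:coreSP}) one always has $\tau<R$ strictly, so the issue does not arise.
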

The previous result applies to our composition scheme $Q(uH(x))$,
that is, to the family of rooted connected graphs that  have core.

\begin{theorem}\label{th:coreSP}
Let $\mathcal{G}$ be a series-parallel-like class, and let $\nY_n$
be the size of the 2-connected core in a random rooted connected
graph $\mathcal{G}$ with core and $n$ vertices.
 Then $\PP\left(\nY_n= k\right)$ tends to a limit $q_k$ as $n$ goes to infinity. The
probability generating function $q(u) = \sum q_k u^k$ is given by
$$q(u)=\tau u B''(u\tau).$$
The estimates of $q_k$ for large $k$ depend on the singular
behaviour of $B(x)$ near $R$ as follows, where $X=\sqrt{1-x/R}$:
\begin{itemize}
\item[(a)]If $B(x)= B_{0}+B_2 X^2+B_{3}X^3+O(X^4)$, then
$q_{k}\sim
\displaystyle\frac{3B_{3}}{4R\sqrt\pi}k^{-1/2}\left(\frac{\tau}{R}\right)^{k}$.
\\
\item [(b)] If $B(x)= B_{0}+B_2 X^2+B_{4}X^4 +
B_5X^5+O(X^6)$, then $q_{k}\sim
-\displaystyle\frac{5B_{5}}{2R\sqrt\pi}k^{-3/2}\left(\frac{\tau}{R}\right)^{k}$.
\end{itemize}

Finally, the probability of a graph being coreless is asymptotically
equal to $1- \rho/\tau$.
\end{theorem}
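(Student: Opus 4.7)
The plan is to apply the subcritical composition machinery of Proposition~\ref{prop:subcritical-case} to the scheme $Q(uH(x))$ with $Q(x)=B'(x)$ and $H(x)=xC'(x)$, which enumerates rooted connected graphs with a core. First I would verify the subcritical hypothesis. Since we are in case (1) of Proposition~\ref{proposition:C's}, one has $H(\rho)=\rho C'(\rho)=-C_{2}=\tau$, and $\tau<R$ by the very definition of $\tau$ as the unique positive solution of $tB''(t)=1$ (recall $RB''(R)>1$ in this regime). The singular expansion in Proposition~\ref{proposition:C's}(1) translates, after multiplying $C'(x)$ by $x=\rho(1-X^{2})$ and isolating the dominant singular term, into $H(x)=\tau-\tfrac{3}{2}C_{3}\,X+O(X^{2})$ with $X=\sqrt{1-x/\rho}$. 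Thus the hypothesis of Proposition~\ref{prop:subcritical-case} holds with exponent $\lambda=1/2<1$.

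Applying Proposition~\ref{prop:subcritical-case} then yields a discrete limit law whose probability generating function is $q(u)=uQ'(\tau u)/Q'(\tau)=uB''(\tau u)/B''(\tau)$. Since $\tau B''(\tau)=1$, this simplifies immediately to $q(u)=\tau uB''(\tau u)$, which is the first assertion of the theorem.

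For the asymptotics of $q_{k}$ I would perform singularity analysis on $q(u)$ at its dominant singularity $u^{\ast}=R/\tau$, inherited from that of $B''$ at $R$. Differentiating the given expansions of $B(x)$ twice with respect to $x$ shows that in case (a) the leading singular part of $B''(x)$ is $\tfrac{3B_{3}}{4R^{2}}X^{-1}$, while in case (b) it is a multiple of $X=(1-x/R)^{1/2}$ obtained from $B_{5}X^{5}$. Plugging into $q(u)=\tau uB''(\tau u)$ and evaluating the analytic prefactor $\tau u$ at $u=u^{\ast}$ (where $\tau u=R$), $q(u)$ picks up a dominant singular contribution proportional to $(1-u\tau/R)^{-1/2}$ in case (a) and to $(1-u\tau/R)^{1/2}$ in case (b). A routine application of the transfer theorem with $\Gamma(1/2)=\sqrt{\pi}$ and $\Gamma(-1/2)=-2\sqrt{\pi}$ then produces the announced estimates of order $k^{-1/2}(\tau/R)^{k}$ and $k^{-3/2}(\tau/R)^{k}$, respectively; positivity of $q_{k}$ is consistent thanks to $B_{3}>0$ from Proposition~\ref{proposition:B's} and $B_{5}<0$ from Proposition~\ref{prop:singB}.

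Finally, for the coreless probability I would use (\ref{eq:GCB2}) in the form $\exp(B'(H(x)))=C'(x)$ to rewrite $Q_{L}(x)=C'(x)-B'(H(x))$, so that $p_{L}=1-\lim_{n}[x^{n}]B'(H(x))/[x^{n}]C'(x)$. Since $B'$ is analytic at $\tau<R$, a first-order Taylor expansion around $\tau$ combined with $H(x)=\tau-\tfrac{3}{2}C_{3}X+O(X^{2})$ and $B''(\tau)=1/\tau$ gives $B'(H(x))=B'(\tau)-\tfrac{3C_{3}}{2\tau}X+O(X^{2})$; meanwhile $C'(x)$ itself has singular part $-\tfrac{3C_{3}}{2\rho}X+O(X^{2})$. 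Transferring coefficients, the common singular factor $-\tfrac{3C_{3}}{2}[x^{n}]X$ cancels and the ratio of the two asymptotics reduces to $\rho/\tau$, yielding $p_{L}\sim 1-\rho/\tau$. The only step I expect to require genuine care is case (b) of the $q_{k}$ asymptotics, where one must propagate the $B_{5}X^{5}$ term cleanly through two $x$-derivatives and the change of variable $x\mapsto\tau u$ while keeping track of the sign of $B_{5}$; everything else is a direct combination of Proposition~\ref{prop:subcritical-case} with standard singularity analysis.
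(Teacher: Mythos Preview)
Your proposal is correct and follows essentially the same route as the paper: apply Proposition~\ref{prop:subcritical-case} with $Q=B'$ and $H=xC'$, simplify $q(u)$ via $\tau B''(\tau)=1$, then read off the tail of $q_k$ by differentiating the singular expansion of $B$ twice and transferring. You actually supply two things the paper's proof leaves implicit or omits: the explicit verification of the subcritical hypothesis (including the form $H(x)=\tau-\tfrac{3}{2}C_3X+O(X^2)$), and the derivation of the coreless probability $1-\rho/\tau$ via comparison of the leading singular coefficients of $B'(H(x))$ and $C'(x)$, which the paper does not prove at all.
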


\begin{proof} We apply Proposition \ref{prop:subcritical-case} with $Q(x)=B'(x)$
and $H(x)=xC'(x)$. Since $\tau B''(\tau)=1$,  we have
$$q(u)=\frac{uB''(u\tau)}{B''(\tau)}=\tau u B''(u\tau),
$$
as claimed. The dominant singularity of $q(u)$ is at $u=R/\tau$. The
asymptotic for the tail of the distribution follow by the
corresponding singular expansions. In case (a) we have
$$B''(X)= {3B_3 \over 4R^2} X^{-1} + O(1).$$
In case (b) we have
$$B''(X)= {2B_4 \over R^2} + {5B_5 \over R^2} X + O(X^2).$$
By applying singularity analysis to $q(u)$, the result follows. We
remark that $B_3>0$ and $B_5 <0$, so that the multiplicative
constants are in each case positive.
\end{proof}

It is shown in \cite{kostas} that the largest block in series-parallel classes
is of order $O(\log n)$. This is to be expected given the exponential tails of the distributions in the previous theorem.

\subsection{Largest block of planar-like
classes}\label{se:largestbloc-planar}

In order to state our main result, we need to introduce the Airy
distribution. Its density is given by
\begin{equation}\label{eq:airy}
  g(x) = 2 e^{-2x^3/3} (x {\rm Ai}(x^2) - {\rm Ai}'(x^2)),
\end{equation}
where ${\rm Ai}(x)$ is the Airy function, a particular solution of
the differential equation $y''-x y = 0$. An explicit series
expansion is (see equation (2) in \cite{airy})
 $$
 g(x) = {1 \over \pi x} \sum_{n\ge1} (-3^{2/3} x)^n {\Gamma(1+2n/3)
 \over n!} \, \sin(-2n\pi/3).
 $$
A plot of $g(x)$ is shown in Figure~\ref{fig:airy}. We remark that
the left tail (as $x \to -\infty$) decays polynomially while the
right tail (as $x \to +\infty$) decays exponentially.
\begin{figure}[htb]
\centerline{\includegraphics[width=0.89\textwidth]{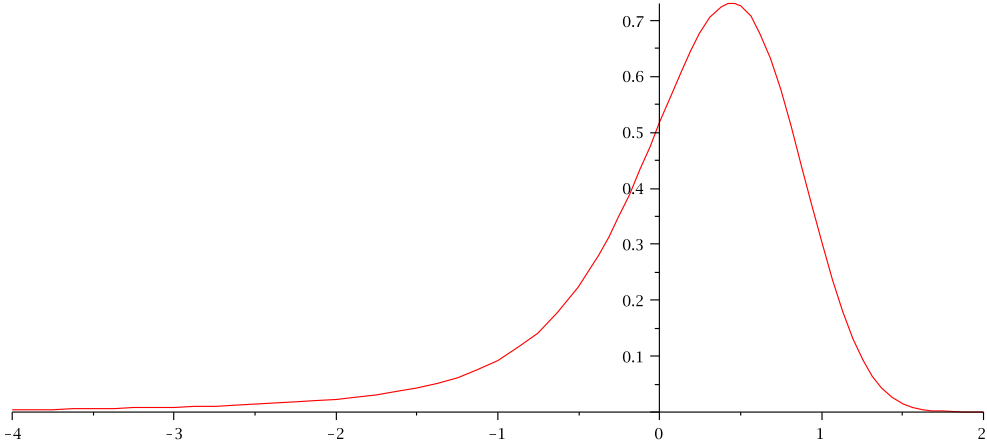}}
\bigskip \caption{The Airy distribution.} \label{fig:airy}
\end{figure}
We are in case (2) of Proposition~\ref{proposition:C's}. In this
situation we have $\rho = \psi(R)$ and $H(\rho) = R$, which is a
\emph{critical} composition scheme. We need  Theorem $5$ of
\cite{airy} and the discussion preceding it, which we rephrase in
the following proposition.

\begin{prop}\label{prop:critical-case}
Consider the composition scheme $Q(uH(x))$. Let $R,\rho$ be the
radius of convergence of $Q$ and $H$ respectively. Assume that $Q$
and $H$ satisfy the critical condition $H(\rho)=R$, and that $H(x)$
and $Q(z)$ have a unique singularity at $\rho$ and $R$ in their
respective discs of convergence. Moreover, the singularities of
$H(x)$ and $Q(z)$ are of type $3/2$, that is,
\begin{eqnarray*}
H(x)&=& H_0 + H_2 X^2 + H_3 X^3 + O(X^4),\\
Q(z)&=&Q_0+Q_{2}Z^2+Q_3Z^3 + O(Z^4),
\end{eqnarray*}
where $X=\sqrt{1-x/\rho}$, $Z=\sqrt{1-z/R}$. Let $\alpha_0$ and
$M_3$ be
$$
\alpha_0 = -\frac{H_0}{H_2}, \qquad M_3 = -\frac{Q_2
H_3}{R}+Q_3\alpha_0^{-3/2}.
$$
Then the asymptotic distribution of the size of the $Q$-core in
$Q(uH(z))$ has two different modes. With probability $p_s = -Q_2
H_3/(R M_3)$ the core has size $O(1)$, and with probability $1-p_s$
the core follows a continuous limit Airy distribution
concentrated at $\alpha_0 n$. More precisely, let $\nY_n$ be the
size of the $Q$-core of a random element of size $n$ of $Q(uH(z))$.
\begin{enumerate}
\item[(a)] For fixed $k$,
$$ \PP\left(\nY_n = k\right) \sim \frac{H_3}{M_3} k R^{k-1} [z^k]Q(z).$$
\item[(b)] For $k = \alpha_0 n + x n^{2/3}$ with $x = O(1)$,
$$ n^{2/3} \PP\left(\nY_n = k\right) \sim \frac{ Q_3 \alpha_0^{-3/2}}{M_3} c g(cx),
\quad c=\frac{1}{\alpha_0}\left( \frac{-H_2}{3 H_3}\right)^{2/3},$$
where $c g(cx)$ is the Airy distribution of parameter $c$.
\end{enumerate}
\end{prop}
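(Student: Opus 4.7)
The plan is to extract $[x^n u^k] Q(uH(x))$ via the identity $[x^n u^k] Q(uH(x)) = [z^k]Q(z)\cdot [x^n] H(x)^k$, to compute the normalizing denominator $[x^n] Q(H(x))$, and to take the ratio in each of the two regimes for $k$.

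First I would determine the singular expansion of $Q(H(x))$ at $x=\rho$. With $X=\sqrt{1-x/\rho}$ and $Z=\sqrt{1-z/R}$, the critical condition $H_0=R$ gives $Z^2\big|_{z=H(x)} = 1-H(x)/R = -H_2 X^2/R - H_3 X^3/R + O(X^4)$, hence $Z = \sqrt{-H_2/R}\,X + O(X^2)$ and $Z^3 = (-H_2/R)^{3/2} X^3 + O(X^4) = \alpha_0^{-3/2} X^3 + O(X^4)$. Collecting the $X^3$ contributions from $Q_2 Z^2$ and $Q_3 Z^3$ yields the leading singular coefficient $-Q_2 H_3/R + Q_3 \alpha_0^{-3/2} = M_3$, so the transfer theorem gives $[x^n] Q(H(x)) \sim (3M_3/(4\sqrt{\pi}))\, n^{-5/2} \rho^{-n}$.

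For part (a), with $k$ fixed, I would expand $H(x)^k$ near $\rho$ by the binomial formula to obtain $H(x)^k = H_0^k + k H_0^{k-1} H_2 X^2 + k H_0^{k-1} H_3 X^3 + O(X^4)$. Since $H_0=R$, the dominant singular term is $k R^{k-1} H_3\, X^3$, and the transfer theorem gives $[x^n] H(x)^k \sim (3 k R^{k-1} H_3/(4\sqrt{\pi})) n^{-5/2} \rho^{-n}$. Dividing by the denominator, the common factor $(3/(4\sqrt{\pi})) n^{-5/2} \rho^{-n}$ cancels and one obtains $\PP(\nY_n = k) \sim (H_3/M_3)\, k R^{k-1} [z^k] Q(z)$, as claimed.

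For part (b), the regime $k = \alpha_0 n + x n^{2/3}$ requires a genuine local limit analysis, for which I would invoke Theorem~5 of~\cite{airy}. The approach is to write $H(x)^k = \exp(k \log H(x))$ and analyze the Cauchy integral for $[x^n] H(x)^k$ on a Hankel contour around $\rho$; the identity $\alpha_0 = -H_0/H_2$ is exactly the condition under which the saddle coalesces with the singularity, and the scale $n^{2/3}$ together with the constant $c = \alpha_0^{-1}(-H_2/(3H_3))^{2/3}$ arises from balancing the quadratic and cubic terms $k H_2 X^2/H_0$ and $k H_3 X^3/H_0$ in the exponent. After scaling, the cubic contribution combined with the $3/2$-type singularity of $Q$ at $R$ assembles into the map-Airy density $c g(c x)$. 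The $M_3$-decomposition $-Q_2 H_3/R + Q_3 \alpha_0^{-3/2}$ then separates the two asymptotic regimes: the first summand carries the probability mass $p_s = -Q_2 H_3/(R M_3)$ of the small-core (discrete) part, while the second summand yields the Airy-distributed weight $Q_3 \alpha_0^{-3/2}/M_3$ concentrated at $\alpha_0 n$. The main technical obstacle is precisely this local limit step: justifying that the oscillatory integral at the coalescing saddle can be identified with the map-type Airy density (\ref{eq:airy}) with uniform error in $x = O(1)$. This is the heart of the argument in~\cite{airy}, which we apply as a black box.
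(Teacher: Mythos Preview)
Your proposal is correct and in fact more detailed than what the paper does: the paper does not prove this proposition at all but simply states it as a rephrasing of Theorem~5 of~\cite{airy} and the discussion preceding it. Your direct singularity-analysis argument for part~(a), computing the $X^3$ coefficient of $Q(H(x))$ and of $H(x)^k$ and taking the ratio, is a clean self-contained derivation that the paper omits; for part~(b) you, like the paper, defer the local-limit coalescing-saddle analysis to~\cite{airy}, so there is no methodological difference there.
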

In particular, we have $\EE\left[\nX_n\right] \sim \alpha n$. The
parameter $c$ quantifies in some sense the dispersion of the
distribution (not the variance, since the second moment does not
exist). Note that the asymptotic probability that the core has size
$O(1)$ is
$$ p_s = \sum^{\infty}_{k=0} \PP\left(\nX_n = k\right) \sim
   \frac{H_3}{M_3} \sum^{\infty}_{k=0} k R^{k-1} [z^k]Q(z) =
   \frac{H_3}{M_3} Q'(R) =  \frac{H_3}{M_3} \left(\frac{-Q_2}{R}\right),
$$
and that the asymptotic probability that the core has size
$\Theta(n)$ is
$$ {Q_3 \alpha_0^{-3/2} \over M_3} = 1-p_s.$$
Now we state the main result in this section. Recall that for a
planar-like class of graphs we have
$$
B(X)= B_{0}+B_2 X^2+B_{4}X^4 + B_5X^5+O(X^6),
$$
where $R$ is the dominant singularity of $B(x)$ and
$X=\sqrt{1-x/R}$.

\begin{theorem}\label{th:largest-block}
Let $\mathcal{G}$ be a planar-like  class, and let $X_n$ be the size
of the largest block in a random connected graph $\mathcal{G}$ with
$n$ vertices. Then
$$
    \PP\left(\nX_n = \alpha  n + x n^{2/3}\right) \sim n^{-2/3} c g(c x),
$$
where $$ \alpha = {R-2B_4 \over R},
 \qquad   c = \left({-2R \over 15B_5}\right)^{2/3},$$ and $g(x)$ is as in~(\ref{eq:airy}).
Moreover, the size of the second largest block is  $O(n^{2/3})$. In
particular, for the class of planar graphs we have
 $\alpha \approx 0.95982$ and $c \approx
 128.35169$.
\end{theorem}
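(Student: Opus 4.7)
The plan is to apply Proposition~\ref{prop:critical-case} to the composition scheme $Q(uH(x))$ with $Q(z)=B'(z)$ and $H(x)=xC'(x)$, which governs the size of the $2$-connected core of a random rooted connected graph, and then to transfer the resulting Airy limit to the largest block of the unrooted graph. The critical condition $H(\rho)=R$ holds automatically in the planar-like regime: case~(2) of Proposition~\ref{proposition:C's} gives $\rho=\psi(R)$, so $H(\rho)=\rho C'(\rho)=R$.

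Next I would compute the relevant singular data. Differentiating the expansion of $C(x)$ from Proposition~\ref{proposition:C's}(2) using $dX/dx=-1/(2\rho X)$ and multiplying by $x=\rho(1-X^2)$ yields $H_0=-C_2=R$, $H_2=C_2-2C_4=-R-2C_4$, $H_3=-\tfrac{5}{2}C_5$; analogously Proposition~\ref{prop:singB} yields $Q_2=-2B_4/R$ and $Q_3=-5B_5/(2R)$. Inserting these into $\alpha_0=-H_0/H_2$ and simplifying via the identity $C_4=-RB_4/(2B_4-R)$ from Proposition~\ref{proposition:C's} gives $\alpha_0=(R-2B_4)/R=\alpha$; a parallel simplification using $C_5=B_5(1-2B_4/R)^{-5/2}$ reduces $c=\alpha_0^{-1}(-H_2/(3H_3))^{2/3}$ to $(-2R/(15B_5))^{2/3}$, which is positive since $B_5<0$ by Proposition~\ref{prop:singB}. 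The numerical values $\alpha\approx 0.95982$ and $c\approx 128.35169$ for planar graphs then follow by substituting the known values of $R$, $B_4$, $B_5$ for $2$-connected planar graphs from~\cite{bender}.

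The main obstacle will be transferring the Airy limit from the core $\nY_n$ of a rooted graph to the largest block $\nX_n$ of the unrooted graph. The key point is that, for a graph with largest block of size $k=\alpha n+xn^{2/3}$, a uniformly chosen root lies in a non-cut vertex of that block with asymptotic probability $k/n\to\alpha$, in which case the core equals the largest block. The two modes of $\nY_n$ in Proposition~\ref{prop:critical-case} correspond precisely to the root lying in, or outside, a linear-size block, and a direct calculation with the constants above verifies $1-p_s=\alpha_0$. Since the discrete mode of $\nY_n$ contributes nothing at the scale $k\sim\alpha n$, this yields $\PP(\nY_n=k)\sim \alpha\,\PP(\nX_n=k)$; combined with the Airy tail $\PP(\nY_n=k)\sim n^{-2/3}(1-p_s)\,cg(cx)$ from Proposition~\ref{prop:critical-case}, the factor $1-p_s=\alpha$ cancels to give $\PP(\nX_n=k)\sim n^{-2/3}cg(cx)$.

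It remains to control the second-largest block. Conditional on the core having size $\alpha n+O(n^{2/3})$, the other blocks lie inside rooted connected subgraphs dangling from the core vertices; these subgraphs are in the same critical regime (singularity of $H$ at $\rho$), and standard concentration arguments for critical Boltzmann-like partitions imply that the largest such dangling subgraph has size $O(n^{2/3})$, hence so does the largest block inside it. The technically delicate pieces are therefore the rooted-to-unrooted transfer (which also requires bounding the number of cut vertices of the largest block by $o(n)$) and this concentration statement for the dangling subgraphs; in contrast, the identification of $\alpha$ and $c$ is purely mechanical once Proposition~\ref{prop:critical-case} is set up.
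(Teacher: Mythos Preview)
Your overall plan coincides with the paper's: apply Proposition~\ref{prop:critical-case} to the critical scheme $Q(uH(x))$ with $Q=B'$ and $H(x)=xC'(x)$, read off $\alpha$ and $c$ from the singular coefficients, and then pass from the core to the largest block. Your extraction of $H_i$, $Q_i$ and the simplifications using Proposition~\ref{proposition:C's}(2) are correct, and in particular the identity $1-p_s=\alpha_0$ that you check does hold.

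The point where your argument diverges from the paper, and where it breaks, is the rooted--to--unrooted transfer. You claim that a uniform root lands in a non-cut vertex of the largest block with probability $k/n\to\alpha$, and flag as a ``technically delicate piece'' the bound that the number of cut vertices of the largest block is $o(n)$. That bound is \emph{false}: for a planar-like class the number of cut vertices of $G$ is asymptotically $(1-\rho/R)\,n$ (Theorem~\ref{th:cut}), and essentially all of them lie in the giant block (every attachment of a pendant piece to $L$ creates a cut vertex of $G$ inside $L$). Hence the number of non-cut vertices of $L$ is about $(\rho/R)\,\alpha n$, not $\alpha n$, and your intermediate relation $\PP(\nY_n=k)\sim\alpha\,\PP(\nX_n=k)$ is obtained with the wrong constant by the wrong route, even though the final answer happens to be right. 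The paper avoids this altogether: it invokes the argument of~\cite{airy} that a linear-size core is, up to an exponentially small error, the largest block, so that the distribution of the largest block equals the \emph{conditional} distribution of the core given that it lies in the linear window; since the Airy mode in Proposition~\ref{prop:critical-case} carries total mass $1-p_s$, dividing by $1-p_s$ yields $n^{-2/3}cg(cx)$ directly. The same reference handles the $O(n^{2/3})$ bound on the second block. In short, keep your derivation of $\alpha$ and $c$, but replace the rooting/probability-$k/n$ mechanism by the conditioning argument from~\cite{airy}.
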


\begin{proof} The composition scheme in our case is $B'(uxC'(x))$. In the notation
of the previous proposition, we have $Q(x)=B'(x) $ and $H(x) =
xC(x)$.

The size of the core is obtained as a direct application of
Proposition \ref{prop:critical-case}. The exact values for planar
graphs have been computed using the known singular expansions for
$B(x)$ and $C(x)$ given in the appendix of \cite{gn}.

For the size of the largest block, one can adapt an argument from
\cite{airy}, implying that the probability that the core has linear
size while not being the largest block tends to 0 exponentially
fast. It follows that the distribution of the size of the largest
block is exactly the same as the distribution of the core in the
linear range.
\end{proof}

The main conclusion is that for planar-like classes of  graphs (and
in particular for planar graphs) there exists a unique largest block
of linear size, whose expected value is asymptotically $\alpha n$
for some computable constant $\alpha$. The remaining block are of
size $O(n^{2/3})$. This is in complete contrast with series-parallel
graphs, where we have seen that there are only blocks of sublinear
size.

\subsection*{Remark.}
An observation that we need later, is that if the largest block $L$
has $N$ vertices, the it is uniformly distributed among all the
2-connected graphs in the class. This is because the number of
graphs of given size whose largest block is $L$ depends only on the
number of vertices of $L$, and not on its isomorphism type.

\medskip
We can also analyze the size of the largest block for graphs with a
given edge density, or average degree.
We state a precise result for planar graphs, which is probably the most interesting one.

\begin{theorem}
For $\mu \in (1,3)$, the largest block in random planar graphs with $n$ vertices and $\lfloor \mu n\rfloor$ edges follows asymptotically an Airy law with computable parameters $\alpha(\mu)$ and $c(\mu)$.
\end{theorem}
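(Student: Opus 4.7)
The plan is to run the bivariate analogue of Theorem~\ref{th:largest-block} with the edge-marking variable $y$ kept as a parameter, and then transfer to the fixed-density model via a local limit theorem for the edge count. Throughout, I fix $y_{0}>0$ and work with the GFs $B(x,y_{0})$, $C(x,y_{0})$, and the network GF $D(x,y_{0})$, viewed as univariate functions of $x$. For the class of planar graphs, $T(x,z)$ is the GF of 3-connected planar graphs and has singular exponent $5/2$ at its dominant singularity $r(x)$; for every $y_{0}$ in the range relevant to $\mu\in(1,3)$, the singularity of $D(x,y_{0})$ comes from the singularity of $T$ (not from a branch point of $\Phi$) and the subcriticality condition $R(y_{0})B''(R(y_{0}),y_{0})<1$ still holds. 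Consequently, case~(2.1) of Theorem~\ref{th:mainresult2} applies, and the planar-like singular expansions of Proposition~\ref{prop:singB} and Proposition~\ref{proposition:C's}(2) hold at $y=y_{0}$, with coefficients $B_{i}(y_{0})$, $C_{i}(y_{0})$, $R(y_{0})$, $\rho(y_{0})$ depending analytically on $y_{0}$.

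Next, I apply the critical composition scheme of Proposition~\ref{prop:critical-case} to
$$
B'\bigl(u\,xC'(x,y_{0}),\,y_{0}\bigr)=Q(uH(x)),\qquad Q(\cdot)=B'(\cdot,y_{0}),\quad H(x)=xC'(x,y_{0}).
$$
Arguing exactly as in the proof of Theorem~\ref{th:largest-block}, the 2-connected core of a random rooted $y_{0}$-weighted connected planar graph has linear size $\alpha(y_{0})n+O(n^{2/3})$ with Airy fluctuations of parameter $c(y_{0})$, where
$$
\alpha(y_{0})=\frac{R(y_{0})-2B_{4}(y_{0})}{R(y_{0})},\qquad c(y_{0})=\left(\frac{-2R(y_{0})}{15\,B_{5}(y_{0})}\right)^{\!2/3},
$$
and the largest block has a.a.s.\ the same asymptotic distribution as the core. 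This establishes the result for the $y_{0}$-weighted model, in which each graph is weighted by $y_{0}^{\#\mathrm{edges}}$.

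Finally, I transfer to the fixed-density model. For the $y_{0}$-weighted distribution on planar graphs with $n$ vertices, the number of edges is asymptotically Gaussian with mean $\kappa(y_{0})n$ and variance of order $n$, by the bivariate analogue of Theorem~\ref{th:edges}, with $\kappa(y_{0})=-y_{0}\rho'(y_{0})/\rho(y_{0})$. The map $y_{0}\mapsto\kappa(y_{0})$ is real-analytic and strictly increasing over the $y_{0}$-range that corresponds to $\mu\in(1,3)$, so I set $y_{0}=y_{0}(\mu)$ with $\kappa(y_{0}(\mu))=\mu$. The local limit theorem for $g_{n,\lfloor \mu n\rfloor}$ proved in~\cite{gn}, suitably extended to the joint distribution of (number of edges, size of the largest block), then allows one to replace the exponential weighting by conditioning on $\{\#\mathrm{edges}=\lfloor \mu n\rfloor\}$. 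Because the edge count concentrates on scale $\sqrt{n}\ll n^{2/3}$, this conditioning only multiplies the pointwise probability by a Gaussian density factor that is asymptotically constant on windows of width $n^{2/3}$ around $\alpha(y_{0})n$, and therefore does not distort the Airy scaling of the largest block. We conclude that $X_{n}$ follows an Airy law with parameters $\alpha(\mu):=\alpha(y_{0}(\mu))$ and $c(\mu):=c(y_{0}(\mu))$.

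The main obstacle is the transfer step. The marginal local limit theorem for the edge count is already available from~\cite{gn}, but what is really needed is the \emph{joint} local limit behaviour: one must show that the Gaussian fluctuations of the edge count (at scale $\sqrt{n}$) and the Airy fluctuations of the largest block (at scale $n^{2/3}$) are asymptotically decoupled. This requires a uniform singularity analysis of the trivariate GF marking vertices, edges and the core size, along the critical composition curve $H(\rho(y_{0}),y_{0})=R(y_{0})$, with the saddle point on $y$ tracking $\mu$. Once that uniform analysis is in place all quantities are analytic in $y_{0}$, so $\alpha(\mu)$ and $c(\mu)$ are indeed computable; a routine but necessary preliminary task is to check that case~(2.1) persists throughout the entire $y_{0}$-interval associated with $\mu\in(1,3)$, i.e.\ that $R(y_{0})B''(R(y_{0}),y_{0})<1$ on that range, so that the planar-like regime never degenerates into the critical case~(3.2).
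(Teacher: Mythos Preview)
Your approach is essentially identical to the paper's: choose $y_{0}$ via $-y_{0}\rho'(y_{0})/\rho(y_{0})=\mu$, observe that the $y_{0}$-weighting concentrates mass on graphs with $\lfloor\mu n\rfloor$ edges, and then rerun the critical composition scheme $B'(u\,xC'(x,y_{0}),y_{0})$ exactly as in Theorem~\ref{th:largest-block} to produce an Airy law with parameters depending analytically on $y_{0}$, hence on $\mu$. The paper's proof is a short paragraph doing precisely this, citing~\cite{gn} for the tilting/local-limit step and not elaborating further.

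Where you differ is in level of detail rather than strategy. You spell out the formulas for $\alpha(y_{0})$ and $c(y_{0})$, you make explicit that case~(2.1) must persist over the whole $y_{0}$-range corresponding to $\mu\in(1,3)$, and you flag the transfer from the weighted model to the conditioned model as requiring a \emph{joint} local limit statement decoupling the $\sqrt{n}$ Gaussian edge fluctuations from the $n^{2/3}$ Airy block fluctuations. The paper simply asserts that the weighting makes only graphs with $\mu n$ edges non-negligible and moves on. Your caution here is reasonable --- the paper is relying on the fact that the edge count is determined at scale $n^{1/2}$ while the block size lives at scale $n^{2/3}$, so conditioning on the edge count is asymptotically flat on the Airy window --- but this is treated as routine in the paper and is not developed. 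In short: same proof, you are just more scrupulous about the hypotheses and the transfer step than the paper itself.
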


\begin{proof}
As discussed in \cite{gn}, we
choose a value $y_0>0$ depending on $\mu$ such that, if we give
weight $y_0^k$ to a graph with $k$ edges, then only graphs with $n$
vertices and $\mu n$ edges have non negligible weight. If $\rho(y)$
is the radius of convergence of $C(x,y)$ as usual, the right choice
is the unique positive solution $y_0$ of
\begin{equation}\label{eq:mu-y}
    -y  \rho'(y)/\rho(y) = \mu,
\end{equation}
Then we work with the generating function $xC(x,y_0)$ instead of
$xC'(x)$. Again we have a critical composition scheme, and as in the proof of Theorem \ref{th:largest-block}, the size of the largest block follows asymptotically an Airy law.
\end{proof}

Figure \ref{fig:plot2conn} shows a plot of the main
parameter $\alpha(\mu)$ for planar graphs and $\mu \in (1,3)$. When $\mu \to 3^-$
we see that $\alpha(\mu)$ approaches 1; the explanation is that  a planar triangulation is 3-connected and hence has a unique block. When $\mu \to 1^+$, $\alpha(\mu)$ tends to 0, in this case because  the largest block in a tree is just an edge.

\begin{figure}[htb]
\centerline{\includegraphics[width=0.8\textwidth]{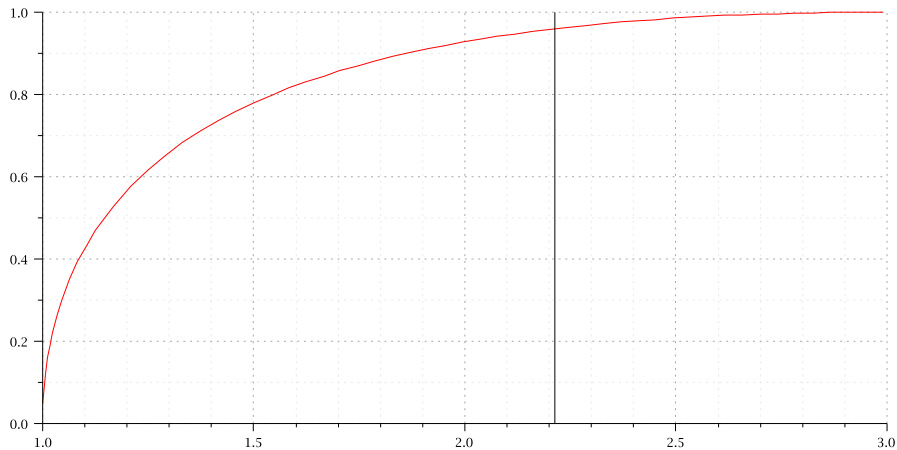}}\bigskip
 \caption{Size of largest block for planar graphs
 with $\mu n$ edges, $\mu \in (1,3)$. The ordinate gives the value
 $\alpha(\mu)$ such that the largest block has size $\sim
 \alpha(\mu) n$. The value at $\kappa$ is $0.9598$
  as in Theorem~\ref{th:largest-block}. } \label{fig:plot2conn}
\end{figure}

\section{Largest $3$-connected component}

Let us recall that, given a 2-connected graph $G$, the 3-connected
components of $G$ are those 3-connected graphs that are the support
of h-networks in the network decomposition of $G$.

We have seen in Theorem~\ref{th:largest-block} that the largest
block in a random graph from a planar-like class is almost surely of
linear size, and it is unique. In this section we prove a similar
result for  the largest 3-connected component in random connected
graphs with $n$ vertices. Again we obtain a limit Airy law, but the
proof is more involved. 
There are three main technical issues we
need to address:

\begin{enumerate}
\item We start with a connected graph $G$. We know from Theorem~\ref{th:largest-block}
that the largest block $L$ of $G$ is distributed according to an Airy law. We show that the largest
3-connected component $T$ of $L$ is again Airy distributed. Thus we
have to \emph{concatenate} two Airy laws, and we show that we obtain
another Airy law with computable parameters. Our proof is based on
the fact that the sum of two independent stable laws of the same
index $\alpha$ (recall that the Airy law corresponds to a particular
stable law of index $3/2$) is again an Airy law with computable
parameters.
In order to illustrate this step, we prove a result of independent
interest: given a random planar map with $m$ edges, the size of the
largest 3-component is Airy distributed with expected value $n/9$.

\item We also need to analyze the number of edges in the largest block $L$
of a connected graph. The number of vertices of $L$ is Airy
distributed with known parameters~\ref{th:largest-block}. On the
other hand, the number of edges in 2-connected graphs with $N$
vertices is asymptotically normally distributed with expected value
$\kappa_2 N$ (see Theorem~\ref{th:edges}). Thus we have to study a
parameter normally distributed (number of edges) within the largest
block, whose size (number of vertices) follows an Airy law. We show
that the composition of these two limit laws gives rise to an Airy
law for the number of edges in the largest block, again with
computable parameters.

\item The analysis of the largest block in random connected graphs is
in terms of the number of vertices, but the analysis for the largest
3-connected component of a 2-connected graph is necessarily in terms
of the number of edges. Thus we need a way to relate both models.
This is done through a technical lemma that shows that two
probability distributions on 2-connected graphs with $m$ edges are
asymptotically equivalent. This is the content of
Lemma~\ref{the:asympt-equal}.
\end{enumerate}

Our main result is the following. We state it for planar graphs,
since this is the most interesting case and we can give explicitly
the parameters, but it holds more generally for planar-like classes
of graphs.

\begin{theorem}\label{th:3conn-main}
Let $\nX_n$ be the number of vertices in the largest 3-connected
component of a random connected planar graph with $n$ vertices. Then
$$
    \PP\left(\nX_n = \alpha_2n + xn^{2/3}\right) \sim  n^{-2/3} c_2g(c_2x),
    $$
where $\alpha_2 \approx 0.7346$ and $c_2\approx 3.14596$ are
computable constants. Additionally, the number of edges in the
largest 3-connected component of a random connected planar graph
with $n$ vertices also follows asymptotically an Airy law with
parameters $\alpha_3 \approx 1.7921$ and $c_3 \approx 1.28956$.
\end{theorem}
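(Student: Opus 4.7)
The plan is to reduce the problem in two stages, and then to concatenate the resulting Airy laws using the stability of the one-sided stable law of index $3/2$.

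First I would reduce to the $2$-connected setting. By Theorem~\ref{th:largest-block} the largest block $L$ of a random connected planar graph with $n$ vertices has $N = \alpha n + \Theta(n^{2/3})$ vertices, with normalized fluctuations following the Airy law of parameter $c$. By the remark following that theorem, conditional on its size $N$, the block $L$ is uniformly distributed among $2$-connected planar graphs on $N$ vertices. Hence it suffices to analyze the largest $3$-connected component of a uniform random $2$-connected planar graph and then to compose with the Airy law above.

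Next, I would analyze a random $2$-connected planar graph via the network decomposition encoded in Equations~(\ref{eq:D}) and (\ref{eq:Bexplicit}). In this decomposition every $2$-connected graph arises from a $3$-connected ``skeleton'' whose edges have been substituted by arbitrary networks. The singular analysis of Section~\ref{sssec:Phi-singular}, and more precisely Propositions~\ref{pro:Dsing} and~\ref{prop:singB}, shows that in the planar-like regime the dominant singularity of $D(x,y_0)$ comes from the singularity of $T(x,z)$ at $(R,D_0)$. This is exactly the \emph{critical} matching of singularities covered by Proposition~\ref{prop:critical-case}. Applying that proposition to the composition $T \circ D$, with edges playing the role of size, gives that the edge count of the largest $3$-connected component of a random $2$-connected planar graph follows an Airy law, with parameters computable from the singular expansions of Propositions~\ref{pro:Dsing} and~\ref{prop:singB}. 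To pass between models parametrized by edges and by vertices I would invoke Lemma~\ref{the:asympt-equal}, which asserts the asymptotic equivalence of two natural probability distributions on $2$-connected graphs with a prescribed number of edges. This yields an Airy law of the form $\beta N + X_2 N^{2/3}$ for the vertex count of the largest $3$-connected component of a uniform $2$-connected planar graph on $N$ vertices, with $X_2$ Airy-distributed.

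To concatenate the two Airy laws I would substitute $N = \alpha n + X_1 n^{2/3}$ into $\beta N + X_2 N^{2/3}$ and keep the leading-order terms, obtaining
\[
  \alpha\beta\, n + \bigl(\beta X_1 + \alpha^{2/3} X_2\bigr)\, n^{2/3} + o(n^{2/3}),
\]
with $X_1$ and $X_2$ independent (since, conditional on $N$, $X_2$ depends only on the internal $3$-connected structure of $L$). Because the Airy distribution of map type is a one-sided stable law of index $3/2$, the linear combination $\beta X_1 + \alpha^{2/3} X_2$ is again an Airy random variable, whose scale is obtained from the $3/2$-stable convolution formula; matching constants gives $\alpha_2 = \alpha\beta \approx 0.7346$ and $c_2 \approx 3.14596$. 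For the edge count of the largest $3$-connected component I would then apply Theorem~\ref{th:edges} specialized to $3$-connected planar graphs: the number of edges of a uniform random such graph on $M$ vertices is asymptotically $\kappa_3 M$ with Gaussian fluctuations of order $M^{1/2}$. Since $M = \Theta(n)$, those $n^{1/2}$ fluctuations are dominated by the $n^{2/3}$ Airy fluctuations and are therefore negligible, so the edge count inherits an Airy law with mean $\alpha_3\, n$, where $\alpha_3 = \kappa_3\,\alpha_2 \approx 1.7921$, and corresponding rescaled parameter $c_3 \approx 1.28956$.

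The main obstacle will be the concatenation step: verifying carefully the conditional independence needed to add the two stable laws, establishing uniform rates of convergence so that substituting a random $N$ into the second Airy law is legitimate, and propagating the normalizations cleanly through both the Trakhtenbrot composition and the two-stage Airy substitution so that the explicit constants come out as stated.
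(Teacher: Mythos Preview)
Your approach is essentially the paper's: concatenate two Airy laws using the stability of the one-sided $3/2$-stable distribution (Proposition~\ref{prop:nolan}), invoke Lemma~\ref{the:asympt-equal} to justify the composition scheme for the $3$-connected core, and absorb the Gaussian vertex/edge conversions of order $n^{1/2}$ into the $n^{2/3}$ Airy window. The main difference is the order in which you perform the normal conversions. The paper first converts the block size from vertices to \emph{edges} (Theorem~\ref{th:edges-largestbloc}), runs the whole Airy concatenation in the edge parameter via Proposition~\ref{prop:edges-core-3con}, and only at the very end converts the $3$-connected edge count back to vertices through the constant $\mu = -R\,\tau'(R)/\tau(R)$. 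You instead try to stay in the vertex parameter for the concatenation and defer the edge conversion to the end; this works but is less clean, because the composition scheme $T_z(R,\,u\,D(R,y))$ is intrinsically edge-based, so your ``inner'' Airy law for the vertex count already hides two normal conversions.

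One point to sharpen: your description of Lemma~\ref{the:asympt-equal} as a device for ``passing between models parametrized by edges and by vertices'' is not quite its role. The lemma says that, conditioned on having $m$ edges, the largest block of a random connected graph is asymptotically distributed like a $2$-connected graph on $m$ edges weighted by $R^{k}/k!$. This is precisely what justifies fixing $x=R$ in the scheme $T_z(x,D(x,y))$ and then applying Proposition~\ref{prop:critical-case}; without it you cannot legally treat $T_z\circ D$ as a univariate critical composition. The vertex/edge conversions themselves are handled separately by the normal laws of Theorem~\ref{th:edges}, not by this lemma.
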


The rest of the section is devoted to the proof of the theorem. The
next three subsections address the technical points discussed above.
We remark that for series-parallel-like classes there is no linear  3-connected component, just as for 2-connected components.

\subsection{Largest 3-connected component in random planar maps}

Recall that a planar map (we say just a map) is a connected planar
graph together with a specific embedding in the plane. The size of
largest $k$-components in several families of maps was thoroughly
studied in~\cite{airy}. Denote by $M(z)$, $B(z)$ and $C(z)$ the
ordinary GFs associated to maps, $2$-connected maps and
$3$-connected maps, respectively; in all cases, $z$ marks edges. Let
$\nnL_n$ be the random variable, defined over the set of maps with
$n$ edges, equal to the size of the largest 2-connected component.
Let $\nnT_m$ be the random variable, defined over the set of
2-connected maps with $n$ edges, equal to the size of the largest
3-connected component.

In \cite{airy} it is shown the following result:

\begin{theorem}\label{thm:maps-airy}
The distribution of both $\nnL_n$ and $\nnT_m$ follows
asymptotically an Airy law, namely
\begin{eqnarray}\label{eq:Airy-maps}
\PP\left(\nnL_n = a_1 n + xn^{2/3}\right)  &\sim& n^{-2/3}c_1g(c_1x), \\
\PP\left(\nnT_m = a_2 m + ym^{2/3}\right) &\sim&
m^{-2/3} c_2g(c_2 y),\nonumber
\end{eqnarray}
where $g(z)$ is the map Airy distribution, $a_1=1/3$,
$c_1=3/4^{2/3}$, $a_2=1/3$, and $c_2 = 3^{4/3}/4$.
\end{theorem}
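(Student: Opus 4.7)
The plan is to reduce both parts of the theorem to an application of the critical composition-scheme result, Proposition~\ref{prop:critical-case}, and then to extract the concrete constants $a_i, c_i$ from the known singular expansions of Tutte's generating functions $M$, $B$ and $C$ for rooted maps, rooted 2-connected maps, and rooted 3-connected maps, counted by edges.

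For the first statement I would use the classical root-edge decomposition of Tutte: a rooted map has a unique \emph{root block}, namely the 2-connected component containing the root edge, and the map is recovered by substituting certain rooted maps along each non-root edge of the root block. In generating function terms this yields a composition scheme of the form $M(z,u) = B(u\widetilde H(z))$, where $u$ marks the number of edges of the root block and $\widetilde H(z)$ is an explicit power series in $z$ and $M(z)$ (essentially a factor $zM(z)^2$ per substituted edge). As in the proof of Theorem~\ref{th:largest-block}, the size of the largest 2-connected component and that of the root block coincide with probability $1-o(1)$, so $\nnL_n$ has the same asymptotic distribution as the core.

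To apply Proposition~\ref{prop:critical-case} I must verify that both $Q := B$ and $\widetilde H$ admit $3/2$-type singular expansions at their respective dominant singularities, and that the critical condition $\widetilde H(\rho_M) = \rho_B$ holds, where $\rho_M, \rho_B$ are the dominant singularities of $M$ and $B$. Both facts are classical consequences of Tutte's explicit formulas for these algebraic generating functions: the universal $3/2$-exponent is characteristic of planar-map enumeration, and the numerical match at the critical point is a direct computation. The second part of the theorem is handled analogously one level deeper: a rooted 2-connected map decomposes via the map-version of Trakhtenbrot's network decomposition into a 3-connected root component whose non-root edges are replaced by networks of 2-connected maps, yielding a second critical composition scheme $B(z,u) = C(u D_{\mathrm{map}}(z))$; the same verification and the same exponential-tail concentration argument then give the Airy law for $\nnT_m$.

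The concrete values $a_1 = 1/3, c_1 = 3/4^{2/3}$ and $a_2 = 1/3, c_2 = 3^{4/3}/4$ follow from substituting the $3/2$ singular-expansion coefficients of $\widetilde H$ and $B$ (respectively $D_{\mathrm{map}}$ and $C$) into the formulas $\alpha_0 = -H_0/H_2$ and $c = \alpha_0^{-1}(-H_2/3H_3)^{2/3}$ of Proposition~\ref{prop:critical-case}. The main technical obstacle is the singular-expansion bookkeeping: $\widetilde H$ must be expanded at the singularity of the outer function $B$ and not at its own singularity, so its coefficients $H_0, H_2, H_3$ combine analytic Taylor data of the substitution with the genuine $3/2$-singularity contribution inherited from the map GF $M$ (and analogously for $D_{\mathrm{map}}$ in the second scheme). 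Once this is carried out carefully, the stated numerical values of $a_i, c_i$ follow by direct algebraic simplification.
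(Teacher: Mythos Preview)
Your proposal is correct and follows essentially the same route as the paper: both reduce the two statements to the critical $\tfrac{3}{2}\circ\tfrac{3}{2}$ composition schemes $B\bigl(z(1+M(z))^2\bigr)$ and $C\bigl(B(z)/z-2\bigr)$ from~\cite{airy}, then read off the Airy parameters from the known singular expansions. The paper's proof is only a sketch citing~\cite{airy} directly, whereas you route through Proposition~\ref{prop:critical-case}, but this is the same argument; your remark about expanding the inner function at the singularity of the outer one is exactly the point that makes the scheme critical.
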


\begin{proof}
Here is a sketch of the proof. In both cases, the distribution
arises from a critical composition scheme of the form $\frac{3}{2}
\,\circ \,\frac{3}{2}$. The distribution of $\nnL_n$ is given by the
scheme $B\left(z(1+M(z))^2\right)$, which reflects the fact that a
map is obtained by gluing a map at each corner of a $2$-connected
map. In the second case, the result is obtained from the composition
scheme $C\left(B(z)/z-2\right)$, which reflects the fact that a
2-connected map is obtained by replacing each edge of a 3-connected
map by a non-trivial 2-connected map (to complete the picture one
must take also into account series and parallel compositions, but
these play no role in the analysis of the largest 3-connected
component, see \cite{census2} for details).
\end{proof}

Let $\nX_n$ be the random variable equal to the size of the largest
$3$-connected component in maps of $n$ vertices. In order to get a a
limit law for $\nX_n$, we need a more detailed study of stable laws.
In particular, Airy laws are particular examples of stable laws of
index $3/2$. Our main reference is the forthcoming
book~\cite{Nolan}. The result we need is Proposition $1.17$, which
appears in \cite[Section 1.6]{Nolan}. We rephrase it here in a form
convenient for us.

\begin{prop}\label{prop:nolan}
Let $\nY_1$ and $\nY_2$ be independent Airy distributions, with
density probability functions $c_1g(c_1x)$ and $c_2g(c_2x)$. Then
 $\nY_1+\nY_2$ follows an Airy distribution with
density probability function $c g(cx)$, with
$c=\left(c_1^{-3/2}+c_2^{-3/2}\right)^{-2/3}$.
\end{prop}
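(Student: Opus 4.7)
The plan is to reduce the statement to the classical convolution rule for totally asymmetric stable laws of index $\alpha = 3/2$. The first and only genuinely substantive step is to identify the map-Airy density $g$ of~(\ref{eq:airy}) as the density of a standard totally skewed stable law of index $3/2$. Once this is done, the characteristic function of a standard map-Airy variable $Y_{0}$ has the stable form
\[
  \phi_{0}(t) \;=\; \EE\bigl[e^{itY_{0}}\bigr] \;=\; \exp\!\bigl(-|t|^{3/2}\omega(t)\bigr),
\]
for a bounded function $\omega(t)$ that depends only on $\operatorname{sgn}(t)$. This identification can be extracted either by a term-by-term Fourier transform of the series expansion for $g$ recalled after~(\ref{eq:airy}), or, more cleanly, by appealing to the general stable-law classification in~\cite{Nolan}.

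Granted this, the proposition is a short change-of-variable calculation. A random variable with density $c\,g(cx)$ is by construction $Y_{0}/c$ for a standard map-Airy $Y_{0}$, hence has characteristic function $\phi_{0}(t/c)$. Writing $\nY_{i} = Y_{0}^{(i)}/c_{i}$ for independent standard map-Airy copies $Y_{0}^{(1)}, Y_{0}^{(2)}$, independence and multiplication of characteristic functions give
\[
  \phi_{\nY_{1}+\nY_{2}}(t) \;=\; \phi_{0}(t/c_{1})\,\phi_{0}(t/c_{2}) \;=\; \exp\!\Bigl(-|t|^{3/2}\bigl(c_{1}^{-3/2}+c_{2}^{-3/2}\bigr)\omega(t)\Bigr),
\]
where the two $\omega$-factors coincide because $\omega$ depends only on the sign of $t$. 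Setting $c = (c_{1}^{-3/2}+c_{2}^{-3/2})^{-2/3}$, the exponent becomes $-|t/c|^{3/2}\omega(t) = \log\phi_{0}(t/c)$, and uniqueness of characteristic functions yields $\nY_{1}+\nY_{2} \stackrel{d}{=} Y_{0}/c$, whose density is $c\,g(cx)$, as claimed. Equivalently, one can invoke Proposition~1.17 of~\cite{Nolan} directly: for totally skewed stable laws with scale parameters $\sigma_{i}$ it gives the convolution rule $\sigma^{\alpha} = \sigma_{1}^{\alpha}+\sigma_{2}^{\alpha}$, which specialized to $\alpha = 3/2$ and $\sigma_{i} = 1/c_{i}$ produces the stated value of $c$ immediately.

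The main obstacle is therefore not the algebra of characteristic functions, which is routine once the stable-law structure is available, but the identification of the map-Airy density~(\ref{eq:airy}) as a totally skewed stable density of index $3/2$. This is where the particular form of $g$—and in particular the precise polynomial left tail/exponential right tail contrast mentioned after~(\ref{eq:airy})—is really used, and it is also the reason the authors prefer to cite Nolan rather than present a self-contained derivation.
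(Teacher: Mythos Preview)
Your proposal is correct and follows essentially the same route as the paper: identify the map-Airy density as a totally skewed stable law of index $3/2$ with scale proportional to $1/c$, and then apply the convolution rule for stable scales (Proposition~1.17 of~\cite{Nolan}). The only difference is cosmetic: you also spell out the characteristic-function computation behind that proposition, whereas the paper invokes Nolan's parametrisation $S(\alpha,\beta,\gamma,\delta;1)$ directly and reads off $\gamma^{3/2}=\gamma_1^{3/2}+\gamma_2^{3/2}$ with $\gamma_i=\gamma_0/c_i$.
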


\begin{proof} We use the notation as in~\cite{Nolan}. A stable
law is characterized by its \emph{stability factor} $\alpha\in
[0,2)$, its \emph{skewness} $\beta\in [-1,1]$, its \emph{factor
scale} $\gamma>0$, and its \emph{location parameter} $\delta\in
\mathbb{R}$. A stable random variable with this parameters is
written in the form $S(\alpha,\beta,\gamma, \delta;1)$ (the constant
$1$ refers to the type of the parametrization; we only deal with
this type). Proposition $1.17$ in~\cite{Nolan} states that if
$S_1=S(\alpha,\beta_1,\gamma_1, \delta_1;1)$ and
$S_2=S(\alpha,\beta_2,\gamma_2, \delta_2;1)$ are independent random
variables, then $S_1+S_2=S(\alpha,\beta,\gamma, \delta, 1)$, with

\begin{equation}\label{eq:stable-parameters}
\beta=\frac{\beta_1\gamma_1^{\alpha}+\beta_2\gamma_2^\alpha}
{\gamma_1^{\alpha}+\gamma_2^\alpha},\,\,\gamma^{\alpha}=\gamma_1^{\alpha}+\gamma_2^\alpha,\,\,
\delta=\delta_1+\delta_2.
\end{equation}

Let us identify the Airy distribution with density probability
function $c g(cx)$ within the family of  stable laws as defined. By
definition, the stability factor is equal to $3/2$. Additionally,
$\beta=-1$: this is the unique value that makes that a stable law
decreases exponentially fast (see Section $1.5$ of~\cite{Nolan}).
The value of the location parameter $\delta$ coincides with the
expectation of the random variable, hence $\delta=0$ (see
Proposition $1.13$). Finally, the factor scale can be written in the
form $\gamma_0/c$, for a suitable value of $\gamma_0$, the one which
corresponds with the normalized Airy distribution with density
$g(x)$. Since $\nY_1=S(3/2,-1,\gamma_0/c_1,0;1)$ and
$\nY_2=S(3/2,-1,\gamma_0/c_2,0;1)$, the result follows from
(\ref{eq:stable-parameters}).
\end{proof}

\begin{theorem}\label{th:maps}
The size $\nX_n$ of the largest $3$-connected component in a random
map with $n$ edges follows asymptotically an Airy law of the form
$$
\PP\left(\nX_n = a n + zn^{2/3}\right) \sim n^{-2/3}
c g(c z),
$$
where  $g(z)$ is the Airy distribution and
\begin{equation}
a=a_1 a_2=1/9, \qquad
c=\left(\left(\frac{c_1}{a_2}\right)^{-3/2}+c_2^{-3/2}a_1\right)^{-2/3}\approx
1.71707.
\end{equation}
\end{theorem}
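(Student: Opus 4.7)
The strategy is to view the largest $3$-connected component $\nX_n$ of a random map as a composition of two successive Airy-distributed quantities, and then invoke Proposition~\ref{prop:nolan} to combine them. Concretely, by Theorem~\ref{thm:maps-airy} the largest block $\nnL_n$ of a random map with $n$ edges is concentrated at $a_1 n$ with fluctuations of order $n^{2/3}$ governed by the parameter $c_1$, and the second-largest block is of order $O(n^{2/3})$. Since the $3$-connected components of a $2$-connected map are supported on that map's edge set, any $3$-connected component of linear size must live inside a block of linear size; consequently, with probability tending to $1$, the largest $3$-connected component of the whole map coincides with the largest $3$-connected component of the unique giant block.

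The first step would be to establish that, conditionally on the largest block having exactly $m$ edges, it is uniformly distributed among all $2$-connected maps with $m$ edges (this is the analogue, for maps, of the remark after Theorem~\ref{th:largest-block}; it follows from the fact that the number of maps whose giant block is a given $2$-connected map $L$ depends only on $|E(L)|$). Given this, the conditional law of the largest $3$-connected component of the giant block, given that it has $m$ edges, is exactly that of $\nnT_m$ in Theorem~\ref{thm:maps-airy}, and is \emph{independent} of the remaining fluctuation in $\nnL_n$ once $m$ is fixed.

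Writing $\nnL_n = a_1 n + \nY_1 n^{2/3}$ with $\nY_1$ asymptotically Airy of parameter $c_1$, and $\nnT_m = a_2 m + \nY_2 m^{2/3}$ with $\nY_2$ (conditionally) asymptotically Airy of parameter $c_2$, one substitutes to get
$$
\nX_n \;=\; \nnT_{\nnL_n} \;=\; a_1 a_2 n \;+\; \bigl( a_2 \nY_1 + a_1^{2/3} \nY_2 \bigr) n^{2/3} \;+\; o(n^{2/3}),
$$
using $m^{2/3} = (a_1 n)^{2/3}(1+o(1))$ since $\nY_1 n^{2/3} = o(n)$. Thus $\nX_n = a\, n + \nZ\, n^{2/3} + o(n^{2/3})$ with $a = a_1 a_2 = 1/9$ and $\nZ = a_2 \nY_1 + a_1^{2/3} \nY_2$, the two summands being independent. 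By the scaling rule, $a_2 \nY_1$ is Airy with parameter $c_1/a_2$ and $a_1^{2/3} \nY_2$ is Airy with parameter $c_2/a_1^{2/3}$, so Proposition~\ref{prop:nolan} gives that $\nZ$ is Airy with parameter
$$
c \;=\; \left( (c_1/a_2)^{-3/2} + (c_2/a_1^{2/3})^{-3/2} \right)^{-2/3} \;=\; \left( (c_1/a_2)^{-3/2} + c_2^{-3/2} a_1 \right)^{-2/3},
$$
matching the claimed formula. The numerical value $c \approx 1.71707$ follows by plugging in $a_1=a_2=1/3$, $c_1=3/4^{2/3}$, $c_2=3^{4/3}/4$.

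The main technical obstacle is the rigorous passage from the two separate limit laws for $\nnL_n$ and $\nnT_m$ to a joint statement that yields convergence of $\nX_n$ in the appropriate local scale. One must control the local limit behaviour uniformly in $m$ over a window of size $n^{2/3}$ around $a_1 n$, so that substituting the random $m = \nnL_n$ into the density $m^{-2/3} c_2 g(c_2 y)$ is legitimate. This amounts to checking a uniform local limit theorem for $\nnT_m$ on that window (which holds because the singular analysis underlying Theorem~\ref{thm:maps-airy} is uniform in the relevant parameters), together with the fact, noted above, that $3$-connected components of size $\Theta(n)$ cannot arise from the sub-leading blocks; everything else is the routine convolution computation summarised by Proposition~\ref{prop:nolan}.
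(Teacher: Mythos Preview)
Your proposal is correct and follows essentially the same route as the paper: condition on the size $m$ of the giant block (uniformly distributed among $2$-connected maps of that size), substitute $m = a_1 n + x n^{2/3}$ and use $m^{2/3}\sim (a_1 n)^{2/3}$ to identify the two rescaled Airy contributions with parameters $c_1/a_2$ and $c_2 a_1^{-2/3}$, then combine them via Proposition~\ref{prop:nolan}. The paper carries out the convolution explicitly as a Riemann sum passing to an integral, whereas you package the same computation in the probabilistic language $\nZ = a_2\nY_1 + a_1^{2/3}\nY_2$; the substance is identical.
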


\begin{proof}
Let us estimate  $n^{2/3}\PP\left(\nX_n = a n +
zn^{2/3}\right)$ for large $n$. Considering the possible
values size of the largest $2$-connected component, we obtain
\begin{equation*}
n^{2/3}\PP\left(\nX_n = a n +
zn^{2/3}\right)=n^{2/3}\sum_{m=1}^{\infty}
\PP\left(\nnL_n=m\right)\PP\left(\nnT_m=a
n+zn^{2/3}\right).
\end{equation*}
In the previous equation we have used the fact that the largest
$2$-connected component is distributed uniformly among all
$2$-connected maps with the same number of edges; this is because
the number of ways a 2-connected map $M$ can be completed to a map
of given size depends only on the size of $M$.

Notice that $\nX_n$ and $\nnT_m$ are \emph{integer} random
variables, hence the previous equation should be written in fact as
\begin{equation*}
n^{2/3}\PP\left(\nX_n = \lfloor a n + z
n^{2/3}\rfloor \right)=n^{2/3}\sum_{m=1}^{\infty}
\PP\left(\nnL_n=m\right)\PP\left(\nnT_m=\lfloor
a n+zn^{2/3}\rfloor\right).
\end{equation*}
Let us write $m=a_1 n+x n^{2/3}$. Then $an+zn^{2/3}=a_2 m+y
m^{2/3}+o\left(m^{2/3}\right)$, where $y=a_1^{-2/3}(z-a_2 x)$.
Observe that when we vary $m$ in one unit, we vary $x$ in $n^{-2/3}$
units. Let $x_0 = (1-a_1 n)n^{-2/3}$, so that $a_1 n + x_0 n^{2/3} =
1$ is the initial term in the sum. The previous sum can be written
in the form
\begin{equation*}
n^{2/3}\sum_{x=x_0+\ell n^{-2/3}}
 \PP\left(\nnL_n=a_1
n+xn^{2/3}\right)\PP\left(\nnT_m=a_2
m+\alpha_1^{-2/3}(z-a_2 x)m^{2/3}\right).
\end{equation*}
where the sum is for all values  $\ell \ge 0$. From
Theorem~\ref{thm:maps-airy} it follows that
\begin{eqnarray*}
&&n^{2/3}\sum_{x=x_0 + \ell n^{-2/3}}
 \PP\left(\nnL_n=a_1
n+xn^{2/3}\right)\PP\left(\nnT_m=a_2 m+\alpha_1^{-2/3}(z-a_2 x)m^{2/3}\right)\\
&\sim& n^{2/3}\sum_{x=x_0 + \ell n^{-2/3}}
 n^{-2/3}c_1g(c_1 x)\,\,m^{-2/3} c_2 g\left(c_2 a_1^{-2/3}(z-a_2
 x)\right)\\
&\sim& \frac{1}{n^{2/3}}\sum_{x=x_0 + \ell n^{-2/3}}
 c_1g(c_1 x)  \,\,c_2 a_1^{-2/3} g\left(c_2 a_1^{-2/3}(z-a_2
 x)\right).
\end{eqnarray*}
In the last equality we have used that $m^{-2/3}=(a_1
n)^{-2/3}(1+o(1)).$ Now we approximate by an integral:
\begin{eqnarray*}
&&n^{-2/3}\sum_{x=x_0 + \ell n^{-2/3}}
 c_1g(c_1 x) \,\,  c_2 a_1^{-2/3} g\left(c_2 a_1^{-2/3}(z-a_2
 x)\right)\\
&\sim& \int_{-\infty}^\infty c_1 g(c_1 x) \,\,c_2a_1^{-2/3}
g\left(c_2a_1^{-2/3}(z-a_2 x)\right) dx
\end{eqnarray*}
The previous estimate holds uniformly for $x$ in a bounded interval.
Now  we set $a_2 x=u$, and with this change of variables we get
$$\int_{-\infty}^\infty \frac{c_1}{a_2} g\left(\frac{c_1}{a_2} u\right)\,\, c_2a_1^{-2/3}
g\left(c_2a_1^{-2/3}(z-u)\right) du.$$
This convolution can be interpreted as a sum of stable laws with
parameter $3/2$ in the following way. Let $\nY_1$ and $\nY_2$ be
independent random variables with densities $\frac{c_1}{a_2}
g\left(\frac{c_1}{a_2} u\right)$ and $c_2a_1^{-2/3}
g\left(c_2a_1^{-2/3}(z-u)\right)$, respectively. Then, the previous
integral is precisely
$\PP\left(\nY_1+\nY_2=z\right)$, and the result
follows from Proposition~\ref{prop:nolan}.
\end{proof}

\subsection*{Remark.}
The previous theorem can be obtained, alternatively, using the
machinery developed in~\cite{airy}. The two composition schemes
$B(zM(z)^2)$ and $C(B(z)/z-2)$ can be composed algebraically into a
single composition scheme $C(B(zM(z)^2)/z-2$. This is again a
critical scheme with exponents $3/2$ and an Airy law follows from
the general scheme in~\cite{airy}. The parameters can be computed
using the singular expansions of $M(z)$, $B(z)$, $C(z)$ at their
dominant singularities which are, respectively, equal to $1/12$,
$4/27$ and $1/4$. We have performed the corresponding computations
in complete agreement with values obtained in Theorem~\ref{th:maps}.
We have chosen the present proof since the same ideas are used later
in the case of graphs, where no algebraic composition seems
available.

\subsection{Number of edges in the largest block of a connected graph}

As discussed above, we have a limit Airy law $\nX_n$ for the number
of vertices in the largest block $L$ in a random connected planar
graph. In order to analyze the largest 3-connected component of $L$,
we need to express $\nX_n$ in terms of the number of edges. This
amounts to combine the limit Airy law with a normal limit law,
leading to slightly modified Airy law. The precise result is the
following.
\begin{theorem}\label{th:edges-largestbloc}
Let $\nZ_n$ be the number of \emph{edges} in the largest block of a
random connected planar graph with $n$ vertices. Then
$$
    \PP\left(\nZ_n = \kappa_2\alpha n + zn^{2/3}\right) \sim n^{-2/3}{c\over\kappa_2} g\left({c\over\kappa_2}z\right),
    $$
here $\alpha$ and $c$ are as in Theorem~\ref{th:largest-block}, and
$\kappa_2\approx 2.26288$ is the constant for the expected number of
edges in random 2-connected planar graphs, as in
Theorem~\ref{th:edges}.
\end{theorem}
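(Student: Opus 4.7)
The plan is to condition on the number of vertices of the largest block and exploit a scale separation between the Airy fluctuations ($n^{2/3}$) and the Gaussian fluctuations of edge counts ($n^{1/2}$). By the remark following Theorem~\ref{th:largest-block}, conditional on $\nX_n = N$, the largest block is uniformly distributed among $2$-connected planar graphs on $N$ labelled vertices. Combining this with Theorem~\ref{th:edges} applied to $2$-connected graphs, and with a local central limit theorem for the number of edges (obtained by a standard Quasi-Powers analysis of $B(x,y)$ at $y=1$), we have
\begin{equation*}
\PP(\nZ_n = k \mid \nX_n = N) \sim \frac{1}{\sqrt{2\pi\lambda_2 N}} \exp\!\left(-\frac{(k-\kappa_2 N)^2}{2\lambda_2 N}\right),
\end{equation*}
uniformly for $N$ in a window of the form $\alpha n + O(n^{2/3+\varepsilon})$.

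Next I would write $\PP(\nZ_n = k) = \sum_N \PP(\nX_n = N)\,\PP(\nZ_n = k \mid \nX_n = N)$, set $k = \kappa_2\alpha n + zn^{2/3}$ and $N = \alpha n + xn^{2/3}$, and substitute $\PP(\nX_n=N) \sim n^{-2/3} c g(cx)$ from Theorem~\ref{th:largest-block}. Converting the sum into an integral (consecutive values of $N$ correspond to $dx = n^{-2/3}$) and using $k - \kappa_2 N = (z-\kappa_2 x)n^{2/3}$ together with $N \sim \alpha n$ yields
\begin{equation*}
\PP(\nZ_n = k) \sim \int_{-\infty}^{\infty} c g(cx) \cdot \frac{1}{\sqrt{2\pi\lambda_2 \alpha n}}\exp\!\left(-\frac{(z-\kappa_2 x)^2 n^{1/3}}{2\lambda_2 \alpha}\right) dx.
\end{equation*}
The second factor is a Gaussian in $x$ centred at $z/\kappa_2$ with width $\Theta(n^{-1/6})\to 0$ and total $x$-mass $(\kappa_2 n^{2/3})^{-1}$. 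Since $cg(cx)$ varies on scale $O(1)$, the Gaussian acts as an approximate delta and the integral collapses to $n^{-2/3}(c/\kappa_2)\,g(cz/\kappa_2) = n^{-2/3}(c/\kappa_2)g((c/\kappa_2)z)$, giving the claim; the factor $1/\kappa_2$ is precisely the Jacobian from the identification $\delta(z-\kappa_2 x) = \kappa_2^{-1}\delta(x - z/\kappa_2)$.

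The main obstacle is establishing the local central limit theorem for the edge count in random $2$-connected planar graphs with the required uniformity in $N$, since Theorem~\ref{th:edges} is stated only as a central limit theorem. The standard route is to apply Hwang's Quasi-Powers scheme to $B(x,y)$, which is analytic and non-degenerate near $(R,1)$, together with a uniform bound on the characteristic function away from the origin; both ingredients are routine for the analytic generating functions at hand. The remaining ingredient, replacing the sum on $N$ by the integral and discarding tail contributions, uses dominated convergence together with the polynomial (left) and exponential (right) tail decay of the Airy density recalled in Section~\ref{se:bloc}.
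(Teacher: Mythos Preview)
Your proof is correct and follows essentially the same approach as the paper: condition on the number of vertices $N$ of the largest block, use the Airy law for $N$ together with the (local) normal law for the edge count in a random $2$-connected graph on $N$ vertices, and exploit the scale separation $n^{1/2}\ll n^{2/3}$ so that the Gaussian factor collapses to a point mass at $x=z/\kappa_2$, producing the Jacobian factor $1/\kappa_2$. You are in fact more explicit than the paper about the need for a local limit theorem and about the delta-function mechanism, but the argument is the same.
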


\begin{proof}
Let $\nX_n$ be, as in Theorem~\ref{th:largest-block}, the number of
vertices in the largest block. In addition, let $\nY_N$ be the
number of edges in a random 2-connected planar graph with $N$
vertices. Then
\begin{equation}\label{suma}
\PP\left(\nZ_n = \kappa_2\alpha n  + zn^{2/3}\right)
= \sum_{x=x_0+\ell n^{-2/3}} \PP\left(\nX_n = \alpha n +
xn^{2/3}\right) \PP\left(\nY_{\alpha n + xn^{2/3}} =
\kappa_2 \alpha n + zn^{2/3}\right),
\end{equation}
with the same convention for the index of summation as in the
previous section.

Since $\nY_N$ is asymptotically normal (Theorem~\ref{th:edges})
$$
\PP\left(\nY_N=\kappa_2 N + y N^{1/2}\right) \sim
N^{-1/2} h(y),
$$
here $h(y)$ is the density of normal law suitably scaled. If we take
$N = \alpha n + xn^{2/3}$, then $$\kappa_2 N = \kappa_2 \alpha N +
\kappa_2xn^{2/3}.$$As a consequence, the significant terms in the
sum in (\ref{suma}) are concentrated around $\alpha n +
(z/\kappa_2)n^{2/3}$, within a window of size $N^{1/2} =
\Theta(n^{1/2})$. Thus we can conclude that
$$
\PP\left(\nZ_n =\kappa_2\alpha n + zn^{2/3} \right)
\sim {1 \over \kappa_2} \PP\left(\nX_n = \lfloor \alpha n +
(z/\kappa_2)n^{2/3} \rfloor\right) \sim n^{-2/3}
{c\over\kappa_2} g\left({c\over\kappa_2}z\right),
$$
where $c$ is the constant in Theorem~\ref{th:largest-block}. The
factor $1 \over \kappa_2$ in the middle arises since in $\lfloor
\kappa_2\alpha n + zn^{2/3} \rfloor$ we have steps of length
$n^{-2/3}$, whereas in $\lfloor \alpha n + (z/\kappa_2) n^{2/3}
\rfloor$ they are of length $n^{-2/3}/\kappa_2$.
\end{proof}

\subsection{Probability distributions for 2-connected graphs}

In this section we study several probability distributions defined
on the set of 2-connected graphs of $m$ edges. The first
distribution $\nX^m_n$ (in fact, a family of probability
distributions, one for each $n$) models the appearance of largest
blocks with $m$ edges in random connected graphs with $n$ vertices.
The second one is a weighted distribution where each 2-connected
graph with $m$ edges receives a weight according to the number of
vertices, and for which it is easy to obtain an Airy law for the
size of the largest 3-connected component. We show that these two
distributions are asymptotically equivalent in a suitable range. In
particular, it follows that the Airy law of the latter distribution
also occurs in the former one. We start by defining precisely both
distributions. We use capital letters like $\nX^m_n$ and $\nY_m$ to
denote random variables whose output is a graph in the corresponding
universe of graphs, so that our distributions are associated to
these random variables. We find this convention more transparent
than defining the associated probability measures.

Let $n,m$ be fixed numbers, and let $\mathcal{C}^m_n$ denote the set
of connected graphs on $n$ vertices such that their largest block
$L$ has $m$ edges. The first probability distribution $\nX^m_n$ is
the distribution of $L$ in a graph of $\mathcal{C}^m_n$ chosen
uniformly at random. That is, if $B$ is a 2-connected graph with $m$
edges, and $\mathcal{C}^B_n\subseteq \mathcal{C}^m_n$ denotes the
set of connected graphs that have $L=B$ as the largest block, then
\[
\PP\left(\nX^m_n = B\right) =
\frac{|\mathcal{C}^B_n|}{|\mathcal{C}^m_n|}.
\]
Let $m$ be a fixed number. The second probability distribution
$\nY_m$ assigns to a 2-connected graph $B$ of $m$ edges and $k$
vertices the probability
\begin{equation}\label{eq:Ym}
\PP\left(\nY_m = B\right) = \frac{R^{k}}{k!}
\frac{1}{[y^m]B(R,y)},
\end{equation}
where $R$ is the radius of convergence of the exponential generating
function $B(x)=B(x,1)$ enumerating 2-connected graphs.
It is clear that $[y^m]B(R,y)$  
is the right normalization factor.

Now we state precisely what we mean when we say that these two
distributions are asymptotically equivalent in a suitable range. In
what follows, $\alpha$ and $\kappa_2$ are the multiplicative
constants of the expected size of the largest block and the expected
number of edges in a random connected graph (see Theorems
\ref{th:largest-block} and \ref{th:edges}). We denote by $V(G)$ and
$E(G)$ the set of vertices and edges of a graph $G$.

\begin{lemma}\label{the:asympt-equal}
Fix positive values $\bar{y},\bar{z}\in\mathbb{R}^+$. For fixed $m$,
let $I_n$ and $I_k$ denote the intervals

\begin{eqnarray*}
I_k &=& \left[\frac{1}{\kappa_2} m -\bar{z} m^{1/2},
\frac{1}{\kappa_2} m + \bar{z} m^{1/2}\right],\\
I_n &=& \left[\frac{1}{\alpha\kappa_2} m -\bar{y} m^{2/3},
\frac{1}{\alpha\kappa_2} m + \bar{y} m^{2/3}\right].
\end{eqnarray*}
Then the probability distributions $\nY_m$ and $\nX^m_n$, for $n\in
I_n$, are asymptotically equal on graphs with $k\in I_k$ vertices,
with uniform convergence for both $k$ and $n$. That is, there exists
a function $\epsilon(m)$ with $\lim_{m\to \infty} \epsilon(m)=0$
such that, for every 2-connected graph $B$ with $m$ edges and $k\in
I_k$ vertices, and for every $n\in I_n$, it holds that
\[
  \left|\frac{\PP\left(\nX^m_n = B\right)}{\PP\left(\nY_m = B\right)} - 1\right| < \epsilon(m).
\]
\end{lemma}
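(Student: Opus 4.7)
The plan is to express both distributions in closed form via the block decomposition, reduce the comparison to a statement about a single function $h(k)$, and conclude by invoking the Airy-scale asymptotics that drive Theorem~\ref{th:largest-block}. Let $\bar{C}_m(u)$ be the EGF of rooted connected graphs (root unlabelled, $u$ marking non-root vertices) whose blocks all have fewer than $m$ edges, so that $\bar{C}_m(u)=\exp((b^{<m})'(u\bar{C}_m(u)))$ where $b^{<m}$ is the EGF of $2$-connected graphs with $<m$ edges. A connected graph on $[n]$ whose largest block is a specified labelled $B$ with $k$ vertices and $m$ edges is built by choosing $V(B)\subseteq[n]$, transferring $B$ via the order-preserving bijection, and attaching at each vertex of $B$ a graph counted by $\bar{C}_m$; ignoring ties of maximum block size (asymptotically negligible), we obtain
\[
|\mathcal{C}^B_n|=\frac{n!}{k!}[u^{n-k}]\bar{C}_m(u)^k,\qquad |\mathcal{C}^m_n|=\sum_{k'}\frac{n!\,b_{k',m}}{k'!}[u^{n-k'}]\bar{C}_m(u)^{k'}.
\]
In particular $\PP(\nX^m_n=B)$ depends on $B$ only through $k=|V(B)|$, as does $\PP(\nY_m=B)=R^k/(k!\,Z_m)$ with $Z_m:=[y^m]B(R,y)$. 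Setting $H_m(u):=u\bar{C}_m(u)$, which satisfies $H_m=u\exp((b^{<m})'(H_m))$, Lagrange inversion yields
\[
[u^{n-k}]\bar{C}_m(u)^k=\frac{k}{n}\,\beta_{n-k}(n),\qquad \beta_\ell(n):=[w^\ell]\exp\bigl(n(b^{<m})'(w)\bigr),
\]
isolating the $k$-dependence. Writing $p(k'):=b_{k',m}R^{k'}/(k'!\,Z_m)$ for the $\nY_m$-marginal on the number of vertices and $h(k):=k\,\beta_{n-k}(n)/R^k$, a substitution gives the compact form
\[
\frac{\PP(\nX^m_n=B)}{\PP(\nY_m=B)}=\frac{h(k)}{\mathbb{E}_p[h(k')]},
\]
so the lemma reduces to showing this ratio tends to $1$ uniformly for $k\in I_k$ and $n\in I_n$.

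The coefficient $\beta_\ell(n)$ is analysed by saddle point in exactly the critical regime underlying Theorem~\ref{th:largest-block}. Using the singular expansion $B'(w)=-B_2/R-(2B_4/R)X^2-(5B_5/(2R))X^3+O(X^4)$ (with $X=\sqrt{1-w/R}$, derived from Proposition~\ref{prop:singB}, where $B_5<0$), and noting that $(b^{<m})'$ differs from $B'$ only by a polynomially small tail near $w=R$ (by the $n^{-7/2}$ decay of $b_n/n!$ for planar-like classes), the saddle $w^*$ satisfying $w^*(b^{<m})''(w^*)=\ell/n$ tends to $R$ precisely when $\ell/n\to 1-\alpha=2B_4/R=RB''(R)$, which is the regime of interest. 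The transfer theorem in the Airy scaling then yields
\[
\beta_\ell(n)\sim R^{-\ell}\,e^{nB'(R)}\,n^{-1/3}\,A\!\left(c\cdot\frac{\ell-(1-\alpha)n}{n^{2/3}}\right)
\]
for a smooth positive profile $A$ (the Airy density of Theorem~\ref{th:largest-block}) and a constant $c>0$. Substituting $\ell=n-k$ shows that $h(k)$ factorises into an $n$-dependent prefactor times $k\cdot A(-c(k-\alpha n)/n^{2/3})\,(1+o(1))$; thus $h$ varies in $k$ only on the Airy scale $n^{2/3}$.

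A local limit theorem for the number of vertices of $\nY_m$ (a refinement of Theorem~\ref{th:edges} tracking both vertices and edges) shows that $p$ is Gaussian-like and concentrated around $\mu_m\sim m/\kappa_2=\alpha n$ with standard deviation $O(\sqrt{m})$. Since $|I_k|=O(\sqrt{m})=O(\sqrt{n})$ is asymptotically much smaller than the Airy scale $n^{2/3}$, the scaled argument $c(k-\alpha n)/n^{2/3}$ varies by only $O(n^{-1/6})$ across $I_k$, and combined with the $O(n^{-1/2})$ change in the $k$-prefactor this gives $h(k)/h(k_0)=1+O(n^{-1/6})$ uniformly for $k,k_0\in I_k$. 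Taking $k_0$ to be the mean of $p$ then yields $h(k)/\mathbb{E}_p[h(k')]=1+O(n^{-1/6})$ uniformly on $I_k\times I_n$, which proves the lemma with $\epsilon(m)=O(m^{-1/6})$. The main obstacle is the Airy-scale asymptotic for $\beta_\ell(n)$: it requires a uniform saddle-point estimate in the critical regime where the saddle approaches the singularity of $(b^{<m})'$ at $R$, together with careful control of the polynomially small discrepancy between $(b^{<m})'$ and $B'$; both are handled by the same transfer-theorem machinery that proves Theorem~\ref{th:largest-block}.
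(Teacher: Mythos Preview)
Your proof is correct and reaches the same conclusion, but the route is genuinely different from the paper's.

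The paper argues at a higher level by invoking Theorem~\ref{th:largest-block} as a black box. It writes
\[
\PP(\nX^m_n=B)=\frac{1}{b_k}\,\frac{\PP(v(\nL_n)=k)}{\PP(e(\nL_n)=m)},
\]
using only that the largest block, conditioned on its vertex count, is uniform. Then it plugs in the known asymptotics $b_k\sim b\,k^{-7/2}R^{-k}k!$ and the Airy law $\PP(v(\nL_n)=k)\sim n^{-2/3}cg(cx)$ and observes that both $k^{7/2}$ and $g(cx)$ are asymptotically constant over $I_k$ (the latter because the Airy argument $x=(k-\alpha n)n^{-2/3}$ moves by only $O(m^{-1/6})$ as $k$ ranges over $I_k$). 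A concentration-plus-proportionality argument then forces the proportionality constant to be $1$.

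You instead rebuild the asymptotics from scratch: the block decomposition plus Lagrange inversion gives the explicit coefficient $\beta_{n-k}(n)=[w^{n-k}]\exp(n(b^{<m})'(w))$, and you analyse this directly by the coalescing saddle-point method that underlies the Airy phenomenon. Your identity $\PP(\nX^m_n=B)/\PP(\nY_m=B)=h(k)/\mathbb{E}_p[h(k')]$ is elegant and makes the normalisation automatic; the core observation---that $h$ varies on the Airy scale $n^{2/3}$ while $p$ is concentrated on the Gaussian scale $m^{1/2}$---is exactly the insight driving the paper's proof, just expressed through different machinery.

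What each approach buys: the paper's proof is short because it recycles Theorem~\ref{th:largest-block} and the $b_k$ asymptotics, but it relies on those results being already in place. Yours is more self-contained and gives an explicit rate $O(m^{-1/6})$, at the cost of redoing the saddle-point analysis in the critical regime. Two points in your write-up deserve a bit more care if you expand this: the replacement of $(b^{<m})'$ by $B'$ near $w=R$ needs a uniform bound (you correctly note the discrepancy is polynomially small, but the saddle-point integral needs this uniformly on the contour), and the passage from ``$h$ is nearly constant on $I_k$'' to ``$\mathbb{E}_p[h(k')]\approx h(\mu_m)$'' requires controlling the tails of $p$ outside $I_k$, which follows from the Gaussian decay of $p$ together with boundedness of the Airy profile.
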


\begin{proof}
Fix $y\in[-\bar{y}, \bar{y}]$, and let $n = \lfloor
\frac{1}{\alpha\kappa_2} m + y m^{2/3}\rfloor \in I_n$. First, we
prove that $\nX^m_n$ and $\nY_m$ are concentrated on graphs with
$k=\frac{1}{\kappa_2} m+O(m^{1/2})$ vertices, that is,
\[\PP\left(\frac{1}{\kappa_2}m-z m^{1/2} \leq |V(\nX^m_n)|
            \leq \frac{1}{\kappa_2}m+z m^{1/2}\right) \]
goes to $1$ when $z, m\to \infty$, and the same is true for $\nY_m$.
Then, we show that $\nX^m_n$ and $\nY_m$ are asymptotically
\emph{proportional} for graphs on $k\in I_k$ vertices. A direct
consequence of both facts is that $\nX^m_n$ and $\nY_m$ are
asymptotically \emph{equal} in $I_k$, since the previous results are
valid for arbitrarily large $\bar{z}$.

We start by considering the probability distribution $\nY_m$.
If we add (\ref{eq:Ym}) over all the $b_{k,m}$ 2-connected graphs
with $k$ vertices and $m$ edges, we get
\[ \PP\left(|V(\nY_m)|=k\right) =  b_{k,m} \frac{R^{k}}{k!} \frac{1}{[y^m]B(R,y)}.\]
On the one hand, the value $[y^m]B(R,y)$ is a constant that does not
depend on $k$. On the other hand, since the numbers $b_{k,m}$
satisfy a local limit theorem (the proof is the same as
in~\cite{gn}), it follows that the numbers $b_{k,m} R^k/k!$ follow a
normal distribution concentrated at $k=1/\kappa_2$ on a scale
$m^{1/2}$, as desired.

We show the same result for $\nX^m_n$. Let $B$ be a 2-connected
graph with $k$ vertices and $m$ edges. We write the probability that
a graph drawn according to $\nX^m_n$ is $B$ as a conditional
probability on the largest block $\nL_n$ of a random connected graph
of $n$ vertices. In what follows, $v\left(\nL_n\right)$ and
$e\left(\nL_n\right)$ denote, respectively, the number of vertices
and edges of $\nL_n$.
\begin{eqnarray*}
\PP\left(\nX^m_n=B\right) &= &\PP\left(\nL_n=B
~\mid~ e(\nL_n)=m\right)\\&=&
\frac{\PP\left(\nL_n=B,\,\,
e(\nL_n)=m\right)}{\PP\left(e(\nL_n)=m\right)}=
\frac{\PP\left(\nL_n=B\right)}{\PP\left(e(\nL_n)=m\right)}.
\end{eqnarray*}
Note that in the last equality we drop the condition
$e\left(\nL_n\right)=m$ because it is subsumed by $\nL_n=B$.

The probability that the largest block $\nL_n$ is $B$ is the same
for all 2-connected graphs on $k$ vertices. Hence, if $b_{k}$
denotes the number of 2-connected graphs on $k$ vertices, we have
\[ \PP\left(\nX^m_n=B\right) = \frac{1}{b_k}\frac{\PP\left(v(\nL_n)=k\right)}{\PP\left(e(\nL_n)=m\right)},\]
If we sum over all the $b_{k,m}$ 2-connected graphs $B$ with $k$
vertices and $m$ edges, we finally get the probability of $\nX^m_n$
having $k$ vertices,
\[ \PP\left(|V(\nX^m_n)|=k\right) = \frac{b_{k,m}}{b_k}\frac{\PP\left(v(\nL_n)=k\right)}{\PP\left(e(\nL_n)=m\right)}.
\]
For fixed $n, m$, the numbers
$\PP\left(V(\nL_n)=k\right)$ follow an Airy
distribution of scale $n^{2/3}$ concentrated at $k_1=\alpha n$ (see
Section~\ref{se:largestbloc-planar}), and the numbers $b_{k,m}/b_k$
are normally distributed around $k_2=m/\kappa_2$ on a scale
$m^{1/2}$. The choice of $n$ makes $k_1$ and $k_2$ coincide but for
a lower-order term $O(m^{2/3})$; hence, it follows that
$\PP\left( |V(\nX^m_n)|=k\right)$ is concentrated at
$k_2=m/\kappa_2$ on a scale $m^{1/2}$, as desired.

Now that we have established concentration for both probability
distributions, we just need to show that they are asymptotically
proportional in the range $k=m/\kappa_2+O(m^{1/2})$. This is easy to
establish by considering asymptotic estimates. Indeed, we have
\[
\PP\left(\nX^m_n=B\right) =
\frac{1}{b_k}\frac{\PP\left(v(\nL_n)=k\right)}{\PP\left(e(\nL_n)=m\right)},
\]
and since $\PP\left(v(\nL_n)=k\right)$ is Airy
distributed in the range $k=m/\kappa_2+O(m^{2/3})$ and $b_k\sim
b\cdot k^{-7/2} R^{-k} k!$, it follows that
\[
\PP\left(\nX^m_n=B\right) \sim b  \cdot k^{7/2}
\frac{R^{k}}{k!}
\frac{g(x)}{\PP\left(e(\nL_n)=m\right)},
\]
where $x$ is defined as $(k-\alpha n) n^{-2/3}$ and $g(x)$ is the
Airy distribution of the appropriate scale factor. Let us compare it
with the exact expression for the probability distribution $\nY_m$,
that is,
\[
\PP\left(\nY_m=B\right) = \frac{R^{k}}{k!}
\frac{1}{[y^m]B(R,y)}.
\]
Clearly, both expressions coincide in the high order terms $R^k$ and
$1/k!$. The remaining terms are either constants like $b$,
$\PP\left(e(\nL_n)=m\right)$ and $[y^m]B(R,y)$, or
expressions that are asymptotically constant in the range of
interest. This is the case for  $k^{7/2}$, which is asymptotically
equal to $((1/\kappa_2)m)^{7/2}$. And also for $g(x)$, which is
asymptotically equal to $g(y(\alpha \kappa_2)^{2/3})$, since
$x=(k-\alpha n)n^{-2/3}$ and $n=\frac{1}{\alpha k_2}m + ym^{2/3}$
implies that
\begin{align*}
 x &= \left( \frac{1}{k_2}m + O(m^{1/2}) - \frac{\alpha}{\alpha k_2}m + ym^{2/3} \right)n^{-2/3} \\
  &=\left( ym^{2/3} \right)\left(\frac{m}{\alpha \kappa_2}\right)^{-2/3} + o(1) \\
  &= y(\alpha \kappa_2)^{2/3} + o(1)
\end{align*}
in the given range. Hence, both distributions are asymptotically
proportional in the given range.

Thus, we have shown the result when $n$ is linked to $m$ by
$n=\frac{1}{\alpha\kappa_2}m + ym^{2/3}$, for any $y$. Clearly,
uniformity holds when $y$ is restricted to a compact set of
$\mathbb{R}$, like $[-\bar{y}, \bar{y}]$.
\end{proof}

\subsection{Proof of the main result}
In order to prove Theorem~\ref{th:3conn-main}, we have to
concatenate two Airy laws. The first one is the number of edges in
the largest block, given by Theorem~\ref{th:edges-largestbloc}. The
second is the number of edges in the largest 3-connected component
of a random 2-connected planar graph with a given number of edges.
This is a gain an Airy law produced by the composition scheme
$T_z(x,D(x,y))$, which encodes the combinatorial operation of
substituting each edge of a 3-connected graph by a network (which is
essentially a 2-connected graph rooted at an edge). However, this
scheme is relative to the variable $y$ marking edges. In order to
have a legal composition scheme we need to take a fixed value of
$x$. The right value is $x=R$, as shown by
Lemma~\ref{the:asympt-equal} Indeed, taking $x=R$ amounts to weight
a 2-connected graph $G$ with $m$ edges with $R^k/k!$, where $k$ is
the number of vertices in $G$. Thus the relevant composition scheme
is precisely $T_z(R,uD(R,y))$, where $u$ marks the size of the
3-connected core. Formally, we can write it as the scheme
$$
    C(uH(y)),  \qquad H(y) = D(R,y), \qquad C(y) = T_z(R,y).
$$
The composition scheme $T_z(R,D(R,y))$ is critical with exponents
$3/2$, and an Airy law appears. In order to compute the parameters
we need the expansion of $D(R,y)$ at the dominant singularity $y=1$,
which is of the form
\begin{equation}\label{eq:Di1}
D(R,y) = \widetilde{D}_0 + \widetilde{D}_2 Y^2 + \widetilde{D}_3
Y^3+ O(Y^4),
\end{equation}
and $Y =\sqrt{1-y}$. The different $\widetilde{D}_i$ can be obtained
in the same way as in Proposition \ref{pro:Dsing}.

\begin{prop}\label{prop:edges-core-3con}
Let $\nW_m$ be the number of edges in the largest 3-connected
component of a 2-connected planar graph with $m$ edges, weighted
with $R^k/k!$, where $k$ is the number of vertices. Then
$$
    \PP\left(\nW_m = \beta n + zn^{2/3}\right) \sim n^{-2/3}c_2 g\left(c_2z\right),
    $$
where $\beta = -\widetilde{D}_0/\widetilde{D}_2 \approx 0.82513$ and
$c_2 = {-\widetilde{D}_2 / \widetilde{D}_0} \left({-\widetilde{D}_2
/ 3\widetilde{D}_3} \right)^{2/3} \approx 2.16648$, and the
$\widetilde{D}_i$ are as in Equation~(\ref{eq:Di1}).
\end{prop}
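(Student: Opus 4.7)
The strategy is to recognise the composition scheme $T_z(R, u\, D(R, y))$ introduced above the statement as a critical scheme of the type handled by Proposition~\ref{prop:critical-case}, applied in the single variable $y$ with $u$ marking the number of edges of the 3-connected core. By Lemma~\ref{the:asympt-equal}, the weight $R^k/k!$ on 2-connected graphs is precisely what one obtains by evaluating the exponential generating function at $x = R$, so this composition captures the combinatorial model of the statement. I would set $Q(z) = T_z(R, z)$, $H(y) = D(R, y)$, verify the hypotheses of Proposition~\ref{prop:critical-case}, and then read off its conclusion.

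Two singular expansions are needed. For $H$, the expansion~(\ref{eq:Di1}) around the dominant singularity $y = 1$ is of type $3/2$ with leading coefficients $H_0 = \widetilde{D}_0$, $H_2 = \widetilde{D}_2 < 0$, $H_3 = \widetilde{D}_3 > 0$; these are obtained by indeterminate coefficients in $\Phi(R, D(R, y)) = 0$, in complete analogy with the derivation of Proposition~\ref{pro:Dsing} but with $y$ rather than $x$ as the varying parameter. For $Q$, termwise $z$-differentiation of the expansion~(\ref{eq-singT}) at $x = R$, using $\partial_z Z^k = -(k/(2D_0)) Z^{k-2}$, produces
\begin{equation*}
T_z(R, z) = \widetilde{T}_0 + \widetilde{T}_2 Z^2 + \widetilde{T}_3 Z^3 + O(Z^4), \qquad Z = \sqrt{1 - z/D_0},
\end{equation*}
with $\widetilde{T}_3 = -5\, T_{5,0}/(2 D_0) \ne 0$ by the planar-like hypothesis. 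The critical condition $H(1) = D_0$ matches the radius of convergence of $Q$ by the very definition of $D_0$.

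Applying Proposition~\ref{prop:critical-case} then yields $\beta = \alpha_0 = -\widetilde{D}_0/\widetilde{D}_2$ and
\begin{equation*}
c_2 = \frac{1}{\alpha_0}\left(\frac{-\widetilde{D}_2}{3\widetilde{D}_3}\right)^{2/3} = \frac{-\widetilde{D}_2}{\widetilde{D}_0}\left(\frac{-\widetilde{D}_2}{3\widetilde{D}_3}\right)^{2/3},
\end{equation*}
which are exactly the formulas in the statement. The numerical values $\beta \approx 0.82513$ and $c_2 \approx 2.16648$ fall out by plugging in the coefficients $\widetilde{D}_0, \widetilde{D}_2, \widetilde{D}_3$, which are computable from the explicit singular expansions of $T(x, z)$ for 3-connected planar graphs recorded in~\cite{gn}. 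I expect the main obstacle to lie precisely in this last step: the structural application of Proposition~\ref{prop:critical-case} is clean, but the concrete extraction of the $\widetilde{D}_i$ requires careful bookkeeping around the implicit equation $\Phi(R, D(R, y)) = 0$ at the critical value $y = 1$, where $\Phi_z(R, D_0)$ already plays a delicate role in the original two-variable analysis.
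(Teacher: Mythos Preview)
Your proposal is correct and follows exactly the route the paper takes: apply Proposition~\ref{prop:critical-case} to the critical composition scheme $T_z(R, u\,D(R,y))$ with $Q(z)=T_z(R,z)$ and $H(y)=D(R,y)$, then read off $\alpha_0$ and $c$ from the coefficients $\widetilde D_i$ of~(\ref{eq:Di1}). The paper's own proof is a single sentence (``direct application of the methods in Theorem~\ref{th:largest-block}''), so your write-up in fact supplies more detail than the original; the only step you leave implicit is the passage from the $Q$-core to the \emph{largest} 3-connected component, which is handled exactly as in the proof of Theorem~\ref{th:largest-block}.
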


\begin{proof}The proof is a direct application of the methods in Theorem~\ref{th:largest-block}.
\end{proof}

\paragraph{Proof of Theorem \ref{th:3conn-main}.}

Recall that $\nX_n$ is the number of vertices in the largest
3-connected component of a random connected planar graph with $n$
vertices. This variables aries as the composition of two random
variables we have already studied. First we consider $\nZ_n$ as in
Theorem~\ref{th:edges-largestbloc}, which is the number of edges in
the largest block, and then $\nW_m$ as in
Proposition~\ref{prop:edges-core-3con}.

The main parameter turns out to be $\alpha_2 = \mu \beta (\kappa_2
\alpha)$, where
\begin{enumerate}
  \item $\alpha$ is for the expected number of vertices in the largest block;
  \item $\kappa_2$ is for the expected number of edges in 2-connected graphs;
  \item $\beta$ is for the expected number of edges in the largest 3-connected component;
  \item $\mu$ is for the expected number of vertices in 3-connected graphs weighted according to $R^k/k!$ if $k$ is the number of vertices.
\end{enumerate}
The constant in 1. and 3. correspond to Airy laws, and the constants
in 2. and 4. to normal laws.

Let $\nY_n$ be the number of edges in the largest 3-connected
component of a random connected planar graph with $n$ vertices
(observe that our main random variable $\nX_n$ is linked directly to
$\nY_n$ after extracting a parameter normally distributed like the
number of vertices). Then
$$
\PP\left(\nY_n = \beta \kappa_2 \alpha n + z
n^{2/3}\right)=\sum_{m=1}^{\infty}
\PP\left(\nZ_n=m\right)\PP\left(\nW_m= \beta
\kappa_2 \alpha n+zn^{2/3}\right).
$$
This convolution can be analyzed in exactly the same way as in the
proof of Theorem~\ref{th:maps}, giving rise to a limit Airy law with
the parameters as claimed.

Finally, in order to go from $\nY_n$ to $\nX_n$ we need only to
multiply the main parameter by $\mu$ and adjust the scale factor. To
compute $\mu$ we need the dominant singularity $\tau(x)$ of the
generating function $T(x,z)$ of 3-connected planar graphs, for a
given value of $x$ (see Section 6 in \cite{degrees}). Then
$$
\mu = -R \tau'(R) / \tau(R).
$$
Given that the inverse function $r(z)$ is explicit (see Equation
(25)~in~\cite{degrees}), the computation is straightforward.

\section{Minor-closed classes}\label{se:examples}

In this section we apply the machinery developed so far to analyse
families of graphs closed under minors. A class of graphs
$\mathcal{G}$  is minor-closed if whenever a graph is in
$\mathcal{G}$ all its minors are also in $\mathcal{G}$. Given a
minor-closed class $\mathcal{G}$, a graph $H$ is an excluded minor
for $\mathcal{G}$ if $H$ is not in $\mathcal{G}$ but every proper
minor is in~$\mathcal{G}$. It is an easy fact that a graph is in
$\mathcal{G}$ if and only if it does not contain as a minor any of
the excluded minors from $\mathcal{G}$. According to the fundamental
theorem of Robertson and Seymour, for every minor-closed class the
number of excluded minors is finite~\cite{RobSey}. We use the
notation $\mathcal{G} =\Ex(H_1, \cdots, H_k)$ if $H_1,\dots,H_k$ are
the excluded minors of $\mathcal{G}$. If all the $H_i$ are
3-connected, then $\Ex(H_1, \cdots, H_k)$ is a closed family. This
is because if none of the 3-connected components of a graph $G$
contains a forbidden, the same is true for $G$ itself.

In order to apply our results we must know which connected graphs
are in the set $\Ex(H_1, \cdots, H_k)$. There are several results in
the literature of this kind. The easiest one is $\Ex(K_4)$, which is
the class of series-parallel graphs. Since a graph in this class
always contains a vertex of degree at most two, there are no
3-connected graphs.  Table~\ref{tab:excludedminor} contains several
such results, due to Wagner, Halin and others (see Chapter X
in~\cite{diestel2}). The proofs make systematic use of Tutte's
wheels theorem (consult Chapter $3$ of~\cite{Diestel}, for
instance): a 3-connected graph can be reduced to a wheel by a
sequence of deletions and contractions of edges, while keeping it
3-connected.

{\renewcommand{\arraystretch}{1.1}
\begin{table}[htb]
\renewcommand{\arraystretch}{1.2}
\begin{center}
\begin{tabular}{|l|l|l|}
  \hline
  Ex. minors & $3$-connected graphs & Generating function $T(x,z)$ \\\hline
  $\displaystyle K_{4}$ & $\emptyset$ & $0$ \\
  $W_{4}$ & $K_{4}$ & $z^{6}x^4/4!$\\
  $K_{5}-e$ & $K_{3},K_{3,3},K_{3}\times K_{2},\{ W_{n}\}_{n\geq 3}$&
  $70z^9x^6/6!-\frac{1}{2}x(\log(1-z^2x)+2z^2x+z^4x^2)$ \\
   $K_5,K_{3,3}$ & Planar 3-connected & $T_p(x,z)$ \\
  $K_{3,3}$ & Planar 3-connected, $K_5$ & $T_p(x,z) + z^{10}x^5/5!$  \\
    $K_{3,3}^+$ & Planar 3-connected, $K_5, K_{3,3}$ & $T_p(x,z) + z^{10}x^5/5! + 10z^9x^6/6!$ \\
  \hline
\end{tabular}\bigskip
\caption{Classes of graphs defined from one excluded minor.
$T_p(x,z)$ is the GF of planar 3-connected graphs.}
\label{tab:excludedminor}
\end{center}
\end{table}
}
 The 3-connected graphs when excluding $W_5$ and the triangular
prism  take longer to describe. For $K_3 \times K_2$ they are: $K_5,
K_5^-,\{ W_{n}\}_{n\geq 3}$, and the family $G_\Delta$ of graphs
obtained from $K_{3,n}$ by adding any number of edges to the part of
the bipartition having 3 vertices. For $W_5$ they are: $K_4,K_5$,
the family $G_\Delta$, the graphs of the octahedron and the cube
$Q$, the graph obtained from $Q$ by contracting one edge, the graph
$L$ obtained from $K_{3,3}$ by adding two edge in one of the parts
of the bipartition, plus all the 3-connected subgraphs of the former
list. Care is needed here for checking that all 3-connected graphs
are included and for counting how many labellings each graph has.

Once we have the full collection of 3-connected graphs, we have
$T(x,z)$ at our disposal. For the family of wheels we have a
logarithmic term (see the previous table) and for the family
$G_\Delta$ it is a simple expression involving $\exp(z^3 x)$. We can
then apply the machinery developed in this paper and compute the
generating functions $B(x,y)$ and $C(x,y)$. For the last three
entries in Table~\ref{tab:excludedminor}, the main problem is
computing $B(x,y)$ and this was done in \cite{gn} and \cite{k33};
these correspond to the planar-like case. In the remaining cases
$T(x,z)$ is either analytic or has a simple singularity coming from
the term $\log(1-xz^2)$, and they correspond to the
series-parallel-like case.

In Table~\ref{tab:constants} we present the fundamental constants
for the classes under study. For a given class $\mathcal{G}$ they
are: the growth constants $\rho^{-1}$ of graphs in $\mathcal{G}$;
the growth constant $R^{-1}$  of 2-connected graphs in
$\mathcal{G}$; the asymptotic probability $p$ that a random graph in
$\mathcal{G}$ is connected; the constant $\kappa$ such that $\kappa
n$ is the asymptotic expected number of edges for graphs in
$\mathcal{G}$ with $n$ vertices; the analogous constant $\kappa_2$
for 2-connected graphs in $\mathcal{G}$; the constant $\beta$ such
that $\beta n$ is the asymptotic expected number of blocks for
graphs in $\mathcal{G}$ with $n$ vertices; and the constant $\delta$
such that $\delta n$ is the asymptotic expected number of cut
vertices for graphs in $\calG$ with $n$ vertices.
The values in Table~\ref{tab:constants} have been computed with
\texttt{Maple} using the results in sections \ref{se:asympt}
and~\ref{se:laws}.

{\renewcommand{\arraystretch}{1.2}
\begin{table}[htb]
\small
$$
\renewcommand{\arraystretch}{1.2}
\begin{array}{|l|c|c|c|c|c|c|c|}
\hline
\hbox{Class} & \rho^{-1} & R^{-1} & \kappa & \kappa_2 & \beta & \delta & p \\
\hline
\Ex(K_4) & 9.0733& 7.8123& 1.61673& 1.71891 &  0.149374& 0.138753& 0.88904\\
\Ex(W_4) & 11.5437& 10.3712& 1.76427& 1.85432 &  0.107065& 0.101533& 0.91305\\
\Ex(W_5) &  14.6667& 13.5508& 1.90239& 1.97981 &  0.0791307& 0.0760808& 0.93167\\
\Ex(K_5^-) & 15.6471& 14.5275& 1.88351& 1.95360 &  0.0742327& 0.0715444& 0.93597\\
\Ex(K_3 \times K_2) & 16.2404& 15.1284&1.92832& 1.9989 & 0.0709204 & 0.0684639 & 0.93832\\
\hbox{Planar} &  27.2269& 26.1841& 2.21327& 2.2629 &  0.0390518&
0.0382991& 0.96325 \\
\Ex(K_{3,3}) & 27.2293& 26.1866& 2.21338& 2.26299 &  0.0390483& 0.0382957& 0.963262\\
\Ex(K_{3,3}^+) &   27.2295& 26.1867& 2.21337& 2.26298 & 0.0390481&
0.0382956& 0.963263 \\
  \hline
\end{array}
$$\bigskip
\caption{Constants for a given class of graphs: $\rho$ and $R$ are
the radius of convergence of $C(x)$ and $G(x)$, respectively;
constants $\kappa,\kappa_2,\beta,\delta$ give, respectively, the
first moment of the number of: edges, edges in 2-connected graphs,
blocks and cut vertices; $p$ is the probability of connectedness. }
\label{tab:constants}
\end{table}
}

\section{Critical phenomena} \label{se:critical}

We have seen that the estimates for the number of planar graphs with
$\mu n$ edges have the same shape for all values $\mu \in (1,3)$.
This is also the case for series-parallel graphs, where $\mu \in
(1,2)$ since maximal graphs in this class have only $2n-3$ edges. It
is natural to ask if there are classes in which there is a
\textit{critical phenomenon}, that is, a different behaviour
depending on the
 edge density. We have not found such phenomenon for `natural'
classes of graphs, in particular those defined in terms of forbidden
minors. But we have been able to construct examples of critical
phenomena  by a suitable choice  of the family $\mathcal{T}$ of
3-connected graphs, as we now explain.

Let $\mathcal{T}$ a family of 3-connected graphs whose function
$T(x,z)$ has a singularity on $z$ of exponent $5/2$. We have seen that the
singular exponents of the associated functions $B(x)$, $C(x)$ and $G(x)$ depend
on the existence of branch-points before $T$ becomes singular. We
have obtained families of graphs for which the singular exponents of
$B(x)$, $C(x)$ and $G(x)$ depend on the particular value of $y_0$.

Now we have two sources for the main singularity of $B(x,y)$ for a
given value of $y$: either (a) it comes from the singularities of
$T(x,z)$; or (b) it comes from a branch point of the equation
defining $D(x,y)$. For planar graphs the singularity always comes
from case~(a), and for series-parallel graphs always from case~(b).
If there is a value $y_0$ for which the two sources coalesce, then
we get a different singular exponent depending on whether $y<y_0$ or
$y>y_0$. The most important consequence in this situation  is that
there is a critical edge density $\mu_0$, such that below $\mu_0$
the largest block has linear size, and above $\mu_0$ it has
sublinear size, or conversely.

Here are some examples.

\begin{itemize}
  \item If $\mathcal{T}$ is the family of \emph{3-connected cubic planar
graphs}, then $B(x,y)$ has singular exponent $5/2$ when
$y<y_0\approx 0.07422$, and  $3/2$ when $y>y_0$. The corresponding
critical value for the number of edges is $\mu_0 \approx 1.3172$.

  \item If $\mathcal{T}$ is the family of \emph{planar triangulations
(maximal planar graphs)}, then $B(x,y)$ exponent $3/2$ when
$y<y_0\approx 0.4468 $, and  $5/2$ when $y>y_0$. The corresponding
critical value for the number of edges is $\mu_0 \approx 1.8755$.

  \item This example shows that more than one critical value may
occur. This is done by adding a single dense graph to the family
$\mathcal{T}$ in the last example. Let $\mathcal{T}$ be the family
of \emph{triangulations plus the exceptional graph $K_6$}. Then
there are two critical values $y_0 \approx 0.4469$ and $y_1 \approx
108.88$, and the corresponding critical edge densities are $\mu_0
\approx 1.8756$ and $\mu_1 \approx 3.4921$. This last value is close
to $7/2$; this is the maximal edge density, which is approached by
taking many copies of $K_6$ glued along a common edge. It turns out
that $B(x,y)$ has exponent $3/2$ when $y<y_0$, $5/2$ when
$y_0<y<y_1$, and again $3/2$ for $y_1 < y$.
\end{itemize}

\bibliography{biblio-3-connex}
\bibliographystyle{abbrv}

\end{document}